\setlist[enumerate]{leftmargin=.5in}
\setlist[itemize]{leftmargin=.5in}
\crefname{hypothesis}{Hypothesis}{Hypotheses}
\title{
A general alternating-direction implicit framework with Gaussian process regression parameter prediction for large sparse linear systems
	\thanks{Submitted to the editors DATE.
	\funding{
The work was supported in part by the National Natural Science Foundation of China (12171412, 11771370), Natural Science Foundation for Distinguished Young Scholars of Hunan Province (2021JJ10037), Hunan Youth Science and Technology Innovation Talents Project (2021RC3110), the Key Project of Education Department of Hunan Province (19A500, 21A0116).
	}}}
\author{Kai Jiang\thanks{
	Key Laboratory of Intelligent Computing and Information Processing of Ministry of Education, 
	Hunan Key Laboratory for Computation and Simulation in Science and Engineering, School of Mathematics and Computational Science,
	Xiangtan University, Xiangtan, Hunan, China, 411105.
	(Corresponding authors. \email{kaijiang@xtu.edu.cn} (KJ); \email{zhangjuan@xtu.edu.cn} (JZ)). }
\and Xuehong Su\footnotemark[2]
\and Juan Zhang\footnotemark[2]
	}
\newcommand*{\addFileDependency}[1]{
  \typeout{(#1)}
  \@addtofilelist{#1}
  \IfFileExists{#1}{}{\typeout{No file #1.}}
}
\definecolor{newcolor}{rgb}{.8,.349,.1}
\DeclareMathOperator*{\argmin}{\mathrm{argmin}}
\definecolor{SlateBlue1}{RGB}{131, 111, 255}
\definecolor{DarkOrange1}{RGB}{255, 127, 0}
\definecolor{DodgerBlue2}{RGB}{28, 134, 238}
\definecolor{DarkCyan}{RGB}{0, 139, 139}
\definecolor{DarkRed}{RGB}{139, 0, 0}
\definecolor{grey11}{RGB}{28, 28, 28}
\definecolor{Red1}{RGB}{255, 0, 0}
\definecolor{DarkMagenta}{RGB}{139, 0, 139}
\definecolor{lightGreen}{RGB}{144, 238, 144}
\definecolor{mgray}{rgb}{0.9,0.9,0.9}
\begin{document}
	\maketitle
	\begin{abstract}
		This paper proposes an efficient general alternating-direction implicit
		(GADI) framework for solving large sparse linear systems. The convergence
		property of the GADI framework is discussed. Most of existing ADI methods can
		be unified in the developed framework. Meanwhile the GADI
		framework can derive new ADI methods. Moreover, as the algorithm efficiency
		is sensitive to the splitting parameters, we offer a data-driven approach,
		the Gaussian process regression (GPR) method based on the Bayesian inference, to
		predict the GADI framework's relatively optimal parameters. The GPR method
		only requires a small training data set to learn the regression prediction
		mapping, can predict accurate splitting parameters, and has high
		generalization capability. It allows us to efficiently solve linear systems
		with a one-shot computation, and does not require any repeated computations.
		Finally, we use the three-dimensional convection-diffusion equation,
		two-dimensional parabolic equation, and
		continuous Sylvester matrix equation to examine the performance of our proposed methods.
		Numerical results demonstrate that the proposed framework
		is faster tens to thousands of times than the existing ADI methods, such as
		(inexact) Hermitian and skew-Hermitian splitting type methods in which the
		consumption of obtaining relatively optimal splitting parameters is ignored.
		As a result, our proposed methods can solve much larger linear systems which these existing ADI methods have not reached.
	\end{abstract}
	
	\begin{keywords}
		general alternating-direction implicit framework, large sparse linear
		systems, convergent analysis, Gaussian process regression, data-driven method
	\end{keywords}
	
	\begin{AMS}
			15A24, 65F10
	\end{AMS}
	
\section{Introduction}
\label{sec:intrd}

Large sparse linear systems have wide applications in scientific and
engineering computation. To the best of our knowledge,  the direct methods, such as
Gaussian elimination, QR decomposition, and LU and Cholesky factorizations, have been
deeply researched since the 1970s; refer to \cite{bai2021matrix} and the references
therein.  In practical problems, as the coefficient matrix is often received by
discrete  or integral operators, it is large scale and sparse. The memory
requirements and the difficulties in developing valid parallel implementations can
limit the scope especially for large problems.  Iterative methods are
popular since they have low memory requirements and are easier to parallelize. In
this work, our interest is introducing a general alternating-direction implicit
(GADI) method to solve large sparse linear systems of the form
\begin{equation}
	Ax=b,
\label{eq:Ax=b}
\end{equation}
where $x,b\in\mathbb{C}^{n}$, $A \in \mathbb{C}^{n\times n}$ is a nonsingular matrix.

\subsection{Background}

Alternating-direction implicit (ADI) methods are widely applied to scientific
computation, such as linear systems, partial differential equations (PDEs), and
optimization. In the 1950s, Peaceman and Rachford proposed an ADI approach, the Peaceman-Rachford splitting (PRS) method,
for solving second-order elliptic equations\,\cite{peaceman1955numerical}.
Subsequently, Douglas and Rachford developed an
efficient method, now named the Douglas-Rachford splitting (DRS) method,
for solving heat conduction problems\,\cite{douglas1956numerical}. Since,
numerous ADI methods have been presented and applied to solving different
PDEs \cite{ greif1998iterative,marchuk1990splitting}.
Besides the application of solving PDEs, ADI methods have also been used in
optimization. For instance, a useful optimization method, the
alternating-direction method of multipliers, equivalent to the
DRS\,\cite{eckstein1992douglas}, has been
widely applied in many fields, such as image science, machine learning, and low-rank
matrix completion \cite{ lions1979splitting, themelis2020douglas}.

The idea of the ADI method has also been devoted to numerical linear
algebra \cite{varga1999matrix}. Recently, Bai, Golub, and Ng have offered a
Hermitian and skew-Hermitian splitting (HSS) method,
analogous to the classical ADI methods in solving PDEs, such as PRS and DRS methods,
for non-Hermitian positive definite linear systems \cite{bai2003hermitian}.
Concretely, they split the coefficient matrix into the Hermitian and non-Hermitian
parts through a splitting parameter, and proved that the HSS method
is convergent unconditionally to the exact solution of \cref{eq:Ax=b}.

Many researchers have paid much attention to the HSS-type methods in recent years due to
its elegant mathematical property, including the normal
and skew-Hermitian splitting (NSS) method, the positivedefinite and
skew-Hermitian splitting (PSS) method, and the generalized HSS method
\cite{2007On, benner2009on, benzi2003, ber2005}. Nowadays, these HSS-type methods have been applied
to many problems, such as the saddle point system, the matrix equation, the spatial fractional diffusion equation, and the complex
semidefinite linear system \cite{bai2011hermitian,cao2019improved,wang2020preconditioned}.
A systematical introduction of the HSS-type methods can be found in a recent monograph
\cite{bai2021matrix}.

\subsection{Challenges}

The ADI methods have attracted extensive interest. However, there still
exist several challenges in scheme construction and algorithm efficiency.
Existing ADI methods mainly are concern with the concrete matrix splitting formulation.
However, there is a lack of a general framework to unify these existing ADI methods,
even further offering new ADI schemes. It is the first challenge of the development of
ADI methods.

The second challenge is how to choose optimal splitting parameters of ADI methods.
ADI methods require splitting the matrix into different parts with splitting
parameters. The efficiency of ADI methods is very sensitive to
these splitting parameters \cite{axelsson2004a,bai2003hermitian}; also see
\Cref{fig:e1_alpha}. There
are two main methods for selecting parameters.
 One is traversing parameters or experimental determination
within some intervals to obtain relatively optimal
parameters \cite{bai2011hermitian, ke2014a, ZHENG2014145}. The
advantage of this traversal method is that it can obtain relatively accurate optimal
parameters, but obviously, it consumes a lot of extra time.
Meanwhile, the traversing method is impractical in scientific computation.
In practical calculation, it needs to obtain the solution of a one-shot
efficient computation, rather than chasing the best algorithmic performance
through repeated computations.
Another one is estimating relatively optimal parameters through theoretical analysis
\cite{bai2003hermitian,wang2013positive}.
Such a theoretical analysis can directly offer a formulation or an algorithm for evaluating splitting parameters.
However, the theoretical method is available on a case-by-case basis, and
theoretical estimate error heavily affects the algorithmic efficiency.
Meanwhile, the theoretical methods may be hardly applicable as the scale of linear
system increases.

The third challenge is how to apply ADI methods to efficiently solve
large linear systems.
The performance of ADI methods is sensitive to splitting matrices and parameters.
For example, the existing results demonstrate that, when ignoring the
consumption of obtaining the optimal splitting parameters,
the HSS-type methods can solve sparse linear algebraic system of a million levels
at best in about a hundred seconds \,\cite{agh2016}, and a continuous Sylvester matrix equation at most $256$ order in tens to thousands of
seconds\,\cite{bai2011hermitian,ke2014a,wang2013positive}.
It also should be pointed out that the cost of obtaining these optimal splitting
parameters in these ADI methods may be much more expensive than solving the linear system itself.
Therefore, it is urgent to improve further the efficiency of ADI schemes to address large
linear systems.

\subsection{Contribution}

In this work, we are mainly concerned with the development of ADI methods in solving large
sparse linear systems. Our contributions are summarized as follows:

\begin{itemize}
\item We put forward a GADI framework to address sparse linear systems, which is more flexible for choosing the splitting matrices and splitting parameters. The new proposed framework can put most existing ADI algorithms into a unified framework and offer new ADI approaches. As an attempt, in this work
	we present three new ADI methods to solve linear algebraic equations and linear matrix equations.
	The corresponding convergence properties of the GADI framework and proposed ADI
	methods are also presented.
\item We present a data-driven method to predict relatively optimal splitting
	parameters, the Gaussian process regression (GPR) method based on Bayesian
	inference. Concretely, the GPR method learns a mapping from the dimension of
	linear systems to relatively optimal splitting parameters through a training data
	set produced from small-scale systems.
    The GPR method avoids the expensive consumption of traversing parameters, and provides an
    efficient approach to predicting relatively optimal parameters in practical
    computation. It should be emphasized that the GPR method requires a small
	training data set to learning the regression prediction mapping, and has sufficient accuracy and high generalization capability.
\item
   We improve the performance of solving large linear
	systems by combining the GADI framework and the GPR method.
    In this work, we take a three-dimensional
	(3D) convection-diffusion equation, a two-dimensional (2D) parabolic equation (see Appendix A.4), and a continuous Sylvester matrix equation as examples to
	demonstrate the performance of our proposed methods.
	In the comparison, we ignore the consumption of obtaining relatively optimal splitting parameters
	in the classical ADI methods; however, our presented methods can still accelerate the computation
	within a one-shot computation from tens to thousands of times over these methods.
	Furthermore, we can apply our methods to solve much larger sparse linear systems
	which the classical ADI methods have not arrived.
\end{itemize}

\subsection{Organization and Notations}
The rest of the paper is organized as follows. In \cref{sec:GADI}, we
present the GADI framework to solve linear systems and analyze the convergence properties.
Furthermore, we present three new ADI schemes: the GADI-HS, the practical
GADI-HS, and the GADI-AB approaches for linear algebra and matrix equations.
In \cref{sec:paraSelection}, we offer two methods to select the
parameters of ADI methods. The first one is the data-driven GPR method based on Bayesian
inference. The second one is the theoretical estimation method. In
\cref{sec:rslts}, we illustrate the efficiency of the GADI framework and GPR
parameter prediction method through sufficient numerical experiments. Finally, in
\cref{sec:conclu}, we draw some concluding remarks and prospects.

Throughout the paper, the sets of $n \times n$ complex and real matrices are
denoted by $\mathbb{C}^{n\times n}$ and $\mathbb{R}^{n\times n}$, respectively. If $X\in
\mathbb{C}^{n\times n}$, let $X^T$, $X^{-1}$, $X^{*}$, $\|X\|$, $\|X\|_2$, $\|X\|_F$
denote the transpose, inverse, conjugate transpose, determinant, Euclidean norm, and
Frobenius norm of $X$, respectively. The notations $\lambda(X)=(\lambda_{1}(X),
\ldots,\lambda_{n}(X))$, $\sigma(X)=(\sigma_{1}(X), \ldots,\sigma_{n}(X))$,
$\rho(X)$ denote the eigenvalue set, singular value set and spectral radius of
$X$, respectively. The expression $X > 0$ ($X \geq 0$) means that $X$ is
Hermitian (semi-) positive definite. If $X,~Y\in\mathbb{C}^{n\times n}$, $X > Y$
($X \geq Y$) denotes that $X-Y$ is Hermitian (semi-) positive definite.
$\|\cdot\|$ represents the $L_2$-norm of a vector. For all $x \in  \mathbb{C}^{n}$,
denote $\|x\|_{M} =\|Mx\|$.  The induced matrix norm is $\|
X\|_{M}=\|MXM^{-1}\|$. The symbol $\otimes$ denotes the Kronecker product.
$I$ represents the identity matrix.

\section{Algorithm framework}\label{sec:GADI}
In this section, we first propose the GADI framework to solve linear systems and
corresponding convergence analysis. Then we show that these existing ADI schemes
belong to the GADI framework. Finally, we give three new ADI schemes: the
GADI-HS, the practical GADI-HS algorithms for linear algebraic equations, and the GADI-AB method for
matrix equations.

\subsection{GADI framework for linear systems}

In this section, we propose a GADI framework for solving the linear equation \cref{eq:Ax=b}.
Let $M, N\in\mathbb{C}^{n\times n}$ be splitting matrices such that
$A=M+N$. Given an initial guess $x^{(0)}$ and $\alpha>0$, $\omega\geq 0$,
the GADI framework is
\begin{equation}\label{eq:GADI for Ax=b}
\left\{\begin{aligned}
&(\alpha I+M)x^{(k+\frac{1}{2})}=(\alpha I-N)x^{(k)}+b,\\
&(\alpha I+N)x^{(k+1)}=(N-(1-\omega)\alpha I)x^{(k)}+(2-\omega)\alpha
x^{(k+\frac{1}{2})}
\end{aligned}\right.
\end{equation}
for $k = 0, 1, \ldots$.

Compared with existing ADI schemes, the GADI framework has more degrees of freedom to construct more ADI methods, including  the
splitting formulation of matrices ($M,N$) and an extra splitting parameter
$\omega$. To demonstrate the generality of GADI, \Cref{tab:GADIframework}
shows the connection between concrete ADI schemes and GADI framework.
\begin{table}[!hbtp]
	\centering
\footnotesize{
	\caption{
	\label{tab:GADIframework}
	Some concrete ADI schemes derived from the GADI framework.}
	\begin{tabular}{|c|c|c|}
		\hline
		$\omega$ &Splitting matrices&    ADI method   \\	
		\hline
		&Hermitian and skew-Hermitian&HSS\,\cite{bai2003hermitian}\\
		0 &Positivedefinite and skew-Hermitian&PSS\,\cite{2007On}\\
		&Normal and skew-Hermitian&NSS\,\cite{2007On}\\
		&Triangular and skew-Hermitian&TSS\,\cite{wang2013positive}\\
		\hline
		\textbf{$[0, 2)$}&Hermitian and skew-Hermitian&GADI-HS \cref{eq:GADI-HS}\\
		\hline
		1 &Hermitian and skew-Hermitian&DRS\,\cref{eq:DRS}\\
		\hline
		\textbf{$[0, 2)$}&---&GADI-AB \cref{eq:GADI-AB for cSe}\\
		\hline
	\end{tabular}
}
\end{table}
From \Cref{tab:GADIframework}, one can find that the GADI framework contains these existing ADI
schemes. More significantly, the GADI framework can derive new ADI methods by choosing the splitting
matrices and splitting parameters, such as the DRS, GADI-HS, and GADI-AB methods as
shown in this work.

Next, we analyze the convergence property of the GADI framework.

\begin{lemma}\label{lem:T(alpha)1}
	 \cite{bai2003hermitian} Let $A\in \mathbb{C}^{n\times n}$ be a positive definite matrix, let $M=\frac{1}{2}(A+A^*) $ and $N=\frac{1}{2}(A-A^*)$ be its Hermitian and skew-Hermitian parts. Then, the matrix
	\begin{equation}
	T(\alpha)=(\alpha I+N)^{-1}(\alpha I-M)(\alpha I+M)^{-1}(\alpha I-N)
	\label{eq:T(alpha)}
	\end{equation}
	satisfies $\rho(T(\alpha))<1$ for any $\alpha>0$.
\end{lemma}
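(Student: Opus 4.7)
The plan is to exploit two standard facts: the spectral radius is invariant under similarity, and for any square matrices $X,Y$ one has $\rho(XY) \le \|X\|_2 \|Y\|_2$. With these in hand, the inequality $\rho(T(\alpha)) < 1$ reduces to controlling two Cayley-type factors separately.

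First I would perform a similarity transformation to "rotate" the product. Left-multiplying $T(\alpha)$ by $(\alpha I + N)$ and right-multiplying by $(\alpha I + N)^{-1}$ yields the similar matrix
\begin{equation*}
\widetilde{T}(\alpha) \;=\; (\alpha I - M)(\alpha I + M)^{-1}\,(\alpha I - N)(\alpha I + N)^{-1},
\end{equation*}
so $\rho(T(\alpha)) = \rho(\widetilde{T}(\alpha)) \le \|\widetilde{T}(\alpha)\|_2$. The submultiplicativity of the spectral norm then gives
\begin{equation*}
\rho(T(\alpha)) \;\le\; \bigl\|(\alpha I - M)(\alpha I + M)^{-1}\bigr\|_2 \,\cdot\, \bigl\|(\alpha I - N)(\alpha I + N)^{-1}\bigr\|_2.
\end{equation*}

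Next I would estimate the two factors. For the first one, since $A$ is positive definite, $M = \tfrac{1}{2}(A+A^*)$ is Hermitian positive definite and so unitarily diagonalizable with eigenvalues $\mu_i > 0$. Functional calculus gives
\begin{equation*}
\bigl\|(\alpha I - M)(\alpha I + M)^{-1}\bigr\|_2 \;=\; \max_{1 \le i \le n} \left|\frac{\alpha - \mu_i}{\alpha + \mu_i}\right| \;<\; 1
\end{equation*}
for every $\alpha > 0$, with strict inequality because $\alpha, \mu_i > 0$. For the second factor, $N = \tfrac{1}{2}(A - A^*)$ is skew-Hermitian, i.e.\ $N^* = -N$. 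The factors $\alpha I - N$ and $\alpha I + N$ commute, and a direct computation
\begin{equation*}
\bigl[(\alpha I - N)(\alpha I + N)^{-1}\bigr]^*\bigl[(\alpha I - N)(\alpha I + N)^{-1}\bigr] = (\alpha I - N)^{-1}(\alpha I + N)(\alpha I - N)(\alpha I + N)^{-1} = I
\end{equation*}
shows that the Cayley transform of a skew-Hermitian matrix is unitary. Hence the second factor has spectral norm exactly $1$.

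Combining the two bounds gives $\rho(T(\alpha)) < 1 \cdot 1 = 1$ strictly, which is the desired conclusion. The main obstacle, in my view, is a bookkeeping one rather than a conceptual one: I need to be careful that the similarity transformation is legitimate, which requires $(\alpha I + N)$ to be invertible — but this is automatic because the spectrum of $N$ is purely imaginary, so $-\alpha$ is never an eigenvalue for $\alpha > 0$. The same remark applies to $(\alpha I + M)$ since $M$ has strictly positive spectrum. Apart from this, the argument is the classical Cayley-transform trick and proceeds routinely.
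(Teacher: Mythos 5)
Your argument is correct: the similarity conjugation by $(\alpha I+N)$, the bound $\rho(\widetilde{T}(\alpha))\le\|(\alpha I-M)(\alpha I+M)^{-1}\|_2\,\|(\alpha I-N)(\alpha I+N)^{-1}\|_2$, the strict contraction of the Hermitian Cayley factor, and the unitarity of the skew-Hermitian Cayley factor are all sound, and the invertibility caveats are handled properly. The paper does not reproduce a proof of this lemma but simply cites \cite{bai2003hermitian}, and your argument is essentially the classical proof given there, so no further comparison is needed.
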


\begin{lemma}\label{lem:T(alpha)2}
	Let $M$ be a positive definite matrix, and let $N$ be a positive (semi-) definite matrix. Then, the  matrix defined by \cref{eq:T(alpha)} satisfies $\rho(T(\alpha))<1$ for any $\alpha>0$.
\end{lemma}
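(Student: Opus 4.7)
The plan is to mimic the classical argument for Lemma~\ref{lem:T(alpha)1} but replace the Cayley-transform step (which works for skew-Hermitian matrices) with a spectral estimate for Hermitian positive semi-definite matrices. The key observation is that the spectral radius is similarity-invariant, and each Hermitian factor in $T(\alpha)$ is normal, so it is unitarily diagonalizable and its operator norm can be read off from its eigenvalues.

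First I would conjugate by $(\alpha I + N)$ to move the leftmost factor to the right. Writing
\begin{equation*}
(\alpha I + N)\,T(\alpha)\,(\alpha I + N)^{-1} = (\alpha I - M)(\alpha I + M)^{-1}(\alpha I - N)(\alpha I + N)^{-1},
\end{equation*}
I get $\rho(T(\alpha)) = \rho(\widehat{T}(\alpha))$ for $\widehat{T}(\alpha)$ the right-hand side. Then I use $\rho(\widehat{T}(\alpha)) \leq \|\widehat{T}(\alpha)\|_2$ and submultiplicativity to reduce the problem to bounding the two Cayley-type factors
\begin{equation*}
\|(\alpha I - M)(\alpha I + M)^{-1}\|_2 \quad \text{and} \quad \|(\alpha I - N)(\alpha I + N)^{-1}\|_2
\end{equation*}
separately.

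Next I would use the spectral theorem. Since $M$ is Hermitian positive definite, its eigenvalues $\lambda_j(M)$ are strictly positive, and the factors $(\alpha I - M)$ and $(\alpha I + M)^{-1}$ are simultaneously diagonalizable, giving
\begin{equation*}
\|(\alpha I - M)(\alpha I + M)^{-1}\|_2 = \max_{j} \frac{|\alpha - \lambda_j(M)|}{\alpha + \lambda_j(M)} < 1
\end{equation*}
for every $\alpha > 0$. The same reasoning applied to $N \geq 0$ (whose eigenvalues $\mu_j(N)$ are nonnegative) yields $\|(\alpha I - N)(\alpha I + N)^{-1}\|_2 \leq 1$. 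Multiplying the two bounds gives $\rho(T(\alpha)) < 1$.

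I do not foresee a serious obstacle here: the argument is essentially the Hermitian/skew-Hermitian proof of Lemma~\ref{lem:T(alpha)1} with the unitary Cayley-transform estimate (equal to $1$) replaced by a positive-semi-definite Cayley estimate (at most $1$). The only subtlety to state carefully is that strict inequality survives the product because the $M$-factor is strictly less than one whenever $M$ has strictly positive spectrum, so the allowance of $N$ being merely semi-definite does not spoil the bound.
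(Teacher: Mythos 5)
Your proof is correct and in fact more careful than the paper's own argument, although both hinge on exactly the same two Cayley-type ratios. The paper asserts directly that every eigenvalue of $T(\alpha)$ has the form $\frac{\alpha-\lambda}{\alpha+\lambda}\cdot\frac{\alpha-\mu}{\alpha+\mu}$ with $\lambda\in\sigma(M)$ and $\mu\in\sigma(N)$, bounds the second factor by $1$ and the first strictly below $1$, and concludes; that factorization of the spectrum of a product is only literally valid when $M$ and $N$ commute, which is not assumed. You instead conjugate by $(\alpha I+N)$, pass to the operator norm, and use submultiplicativity together with the spectral theorem applied to each Hermitian factor separately --- the same mechanism as the Bai--Golub--Ng proof behind \cref{lem:T(alpha)1} --- and this yields the identical bound $\rho(T(\alpha))\leq\max_{\lambda\in\sigma(M)}\left|\frac{\alpha-\lambda}{\alpha+\lambda}\right|<1$ without any commutativity hypothesis. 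So the two routes buy the same estimate, but yours supplies the justification (similarity invariance plus $\rho(\cdot)\leq\|\cdot\|_2$) that the paper's eigenvalue-product step glosses over. The one assumption you should state explicitly is that ``positive (semi-)definite'' is read as Hermitian positive (semi-)definite, which matches the paper's declared notational convention and is what legitimizes the spectral-theorem step for both $M$ and $N$.
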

\begin{proof}
	Let $\lambda$ and $\mu$ be the eigenvalues of the matrices $M$ and $N$, respectively. Then the eigenvalues of the matrix $T(\alpha)$ have the following form
	\begin{equation*}
		v=\frac{\alpha-\lambda}{\alpha+\lambda}\cdot \frac{\alpha-\mu}{\alpha+\mu}.
	\end{equation*}
	Note that
	\begin{equation*}
		\left|\frac{\alpha-\mu}{\alpha+\mu}\right|\leq 1
	\end{equation*}
	for $N$ is a positive (semi-) definite matrix and $\alpha>0$, it then follows that
	\begin{equation*}
		\rho(T(\alpha))=\max_{\lambda\in \sigma(M)\atop \mu \in \sigma(N)}\left| \frac{\alpha-\lambda}{\alpha+\lambda}\right|\cdot \left|\frac{\alpha-\mu}{\alpha+\mu}\right|\leq \max_{\lambda\in \sigma(M)}\left| \frac{\alpha-\lambda}{\alpha+\lambda}\right|,
	\end{equation*}
	where $\sigma(M)$ and $\sigma(N)$ denote the spectrum of matrices $M$ and $N$, respectively. Analogously, since $M$ is a positive definite matrix, it is easy to verify that $\rho(T(\alpha))<1$.
\end{proof}

Now we discuss the convergence of the GADI framework \cref{eq:GADI for Ax=b}.
\begin{theorem}\label{th:GADI Conver analysis}
   Let $A\in \mathbb{C}^{n\times n}$ be a positive definite matrix, let $M=\frac{1}{2}(A+A^*) $ and $N=\frac{1}{2}(A-A^*)$ be its Hermitian and skew-Hermitian parts; or let $M$ be a positive definite matrix, and let $N$ be a positive (semi-) definite matrix.
	The GADI framework \cref{eq:GADI for Ax=b} is convergent to
	the unique solution $x^{*}$ of the linear equation
	\cref{eq:Ax=b} for any $\alpha>0$ and $\omega\in[0,2)$. Moreover, the
	spectral radius $\rho(T'(\alpha,\omega))$ satisfies
	\begin{equation}\label{eq:rhoT<1}
	\rho(T'(\alpha,\omega))\leq \frac{1}{2}(\rho(T(\alpha))+1)<1,
	\end{equation}
	where
	\begin{equation}\label{eq:T'}
	T'(\alpha,\omega)=(\alpha I+N)^{-1}(\alpha I+M)^{-1}(\alpha ^{2}
	I+MN-(1-\omega)\alpha A),
	\end{equation}
	and $T(\alpha)$ is defined by \cref{eq:T(alpha)}.
\end{theorem}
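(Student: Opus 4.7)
The plan is to reduce the two-step recursion \cref{eq:GADI for Ax=b} to a one-step recursion of the form $x^{(k+1)} = T'(\alpha,\omega) x^{(k)} + c$, and then exploit a clean algebraic decomposition that writes $T'(\alpha,\omega)$ as an affine function of the HSS-type matrix $T(\alpha)$ from \cref{eq:T(alpha)}. This will let me reduce everything to the known bound $\rho(T(\alpha))<1$ from \cref{lem:T(alpha)1} and \cref{lem:T(alpha)2}.

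First I would eliminate $x^{(k+\frac12)}$ from \cref{eq:GADI for Ax=b}: solving the first equation for $x^{(k+\frac12)}$, substituting into the second, and collecting terms. The coefficient of $x^{(k)}$ factors as $(\alpha I+N)^{-1}(\alpha I+M)^{-1}$ times
\[
(\alpha I+M)\bigl(N-(1-\omega)\alpha I\bigr)+(2-\omega)\alpha(\alpha I-N),
\]
which, after a routine expansion, collapses exactly to $\alpha^{2}I+MN-(1-\omega)\alpha A$. This verifies \cref{eq:T'}. Consistency with $Ax^{*}=b$ (which shows $x^{*}$ is a fixed point) together with $\rho(T'(\alpha,\omega))<1$ will give convergence to the unique solution.

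The crux of the spectral analysis is the identity
\[
\alpha^{2}I+MN-(1-\omega)\alpha A=\Bigl(1-\tfrac{\omega}{2}\Bigr)(\alpha I-M)(\alpha I-N)+\tfrac{\omega}{2}(\alpha I+M)(\alpha I+N),
\]
which is checked by expanding both sides and using $A=M+N$. Multiplying on the left by $(\alpha I+N)^{-1}(\alpha I+M)^{-1}$ and using that $(\alpha I\pm M)$ commute (they are polynomials in $M$), so that $(\alpha I+M)^{-1}(\alpha I-M)=(\alpha I-M)(\alpha I+M)^{-1}$, the first piece becomes exactly $T(\alpha)$ and the second becomes $I$. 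Thus I obtain the decomposition
\[
T'(\alpha,\omega)=\Bigl(1-\tfrac{\omega}{2}\Bigr)T(\alpha)+\tfrac{\omega}{2}I.
\]
Recognizing this convex-combination structure is, in my view, the only real obstacle; once it is in place, the rest is essentially free.

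From the decomposition, every eigenvalue of $T'(\alpha,\omega)$ has the form $(1-\tfrac{\omega}{2})\mu+\tfrac{\omega}{2}$ for some $\mu\in\sigma(T(\alpha))$. Since $\omega\in[0,2)$, both coefficients $1-\omega/2$ and $\omega/2$ are nonnegative with sum $1$, so
\[
\bigl|(1-\tfrac{\omega}{2})\mu+\tfrac{\omega}{2}\bigr|\le (1-\tfrac{\omega}{2})|\mu|+\tfrac{\omega}{2}\le (1-\tfrac{\omega}{2})\rho(T(\alpha))+\tfrac{\omega}{2}.
\]
Under either hypothesis on $M$ and $N$, \cref{lem:T(alpha)1} or \cref{lem:T(alpha)2} yields $\rho(T(\alpha))<1$, so the right-hand side is a strict convex combination of a number less than one and one itself, giving $\rho(T'(\alpha,\omega))<1$ and hence convergence. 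A further elementary estimate, comparing $(1-\omega/2)\rho(T(\alpha))+\omega/2$ against $\tfrac12(\rho(T(\alpha))+1)$, yields the bound \cref{eq:rhoT<1}, completing the argument.
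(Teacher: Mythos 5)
Your proposal is correct and follows essentially the same route as the paper: the identical decomposition $T'(\alpha,\omega)=\bigl(1-\tfrac{\omega}{2}\bigr)T(\alpha)+\tfrac{\omega}{2}I$, the same eigenvalue correspondence, and the same appeal to \Cref{lem:T(alpha)1} and \Cref{lem:T(alpha)2} (your triangle-inequality bound is simply a cleaner version of the paper's explicit computation of $|\mu_k|$ via $\lambda_k=a+bi$). The one loose end in your final step --- the comparison $(1-\tfrac{\omega}{2})\rho(T(\alpha))+\tfrac{\omega}{2}\leq\tfrac{1}{2}(\rho(T(\alpha))+1)$ holds only for $\omega\leq 1$ --- is shared with the paper, whose proof likewise establishes only $\rho(T'(\alpha,\omega))\leq\tfrac{1}{2}[(2-\omega)\rho(T(\alpha))+\omega]<1$ rather than \cref{eq:rhoT<1} verbatim.
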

\begin{proof}
	For $k=0,1,2,\ldots$, by eliminating $x^{(k+\frac{1}{2})}$ from the
	second equation of \cref{eq:GADI for Ax=b}, we can rewrite the GADI framework as
	\begin{equation*}
	x^{(k+1)}=T'(\alpha,\omega)x^{(k)}+G(\alpha,\omega),
	\end{equation*}
	where the iterative matrix $T'(\alpha,\omega)$ is defined by \cref{eq:T'} and
	$$G(\alpha,\omega)=(2-\omega)\alpha (\alpha I+N)^{-1}(\alpha I+M)^{-1}b.$$	
	From \cref{eq:T'}, we obtain
	\begin{equation*}
	\begin{aligned}
	2T'(\alpha,\omega)
	=&~2(\alpha I+N)^{-1}(\alpha I+M)^{-1}(\alpha ^{2} I+MN-(1-\omega)\alpha (M+N))\\
	=&~(2-\omega)(\alpha I+N)^{-1}(\alpha I+M)^{-1}(\alpha I-M)(\alpha I-N)\\
	&~+ \omega(\alpha I+N)^{-1}(\alpha I+M)^{-1}(\alpha I+M)(\alpha I+N)\\
	=&~(2-\omega)T(\alpha)+\omega I,
	\end{aligned}
	\end{equation*}
	where $(\alpha I+M)^{-1}(\alpha I-M)=(\alpha I-M)(\alpha I+M)^{-1}$. Therefore,
	\begin{equation*}
	T'(\alpha,\omega)=\frac{1}{2}[(2-\omega)T(\alpha)+\omega I].
	\end{equation*}
	Hence, we have the following relationship:
	\begin{equation*}
	\mu_k=\frac{1}{2}[(2-\omega)\lambda_k+\omega],\quad \mu_k\in
	\lambda(T'(\alpha,\omega)),\quad \lambda_k\in\lambda(T(\alpha)),
	\end{equation*}
where $\lambda(T'(\alpha,\omega))$ and $\lambda(T(\alpha))$ denote the
	eigenvalue set of the matrix $T'(\alpha,\omega)$ and $T(\alpha)$,
	respectively. Since $\lambda_k,\mu_k\in \mathbb{C}$, let $\lambda_{k}=a+bi,$ where
	$i=\sqrt{-1}$, then
	\begin{equation}\label{eq:|mu_k|}
	\begin{aligned}
	|\mu_k|
	&=\frac{1}{2}\sqrt{(2-\omega)^2
		a^2+\omega^2+2(2-\omega)a\omega+(2-\omega)^2b^2}\\
	&\leq \frac{1}{2}\sqrt{(2-\omega)^2
		(a^2+b^2)+\omega^2+2(2-\omega)\omega\sqrt{a^2+b^2}}\\
	&=\frac{1}{2}[(2-\omega)|\lambda_{k}|+\omega].
	\end{aligned}
	\end{equation}
	Thus, combining \eqref{eq:|mu_k|} with $\omega\in[0,2)$
	, \Cref{lem:T(alpha)1}, and \Cref{lem:T(alpha)2}, we obtain
	\begin{equation*}
	\rho(T'(\alpha,\omega))\leq \frac{1}{2}[(2-\omega)\rho(T(\alpha))+\omega]<1.
	\end{equation*}
	This implies that the GADI framework is convergent to the unique solution $x^{*}$ of
	\cref{eq:GADI for Ax=b}.
\end{proof}

In the subsequent subsection, we will present three new ADI schemes: the GADI-HS, practical
GADI-HS, and GADI-AB methods derived from the GADI framework.

\subsection{GADI-HS scheme for linear algebraic equations}
\label{subsec:GADI-HS}

Assume that the matrix $A\in \mathbb{C}^{n\times n}$ in \cref{eq:Ax=b} is
a large sparse non-Hermitian and positive definite matrix. The first proposed ADI
scheme in the GADI framework is splitting $A$ into Hermitian (H) and skew-Hermitian
(S) parts. A natural selection is
\begin{equation}
H=\frac{A+A^{*}}{2},\quad S=\frac{A-A^{*}}{2}.
\label{eq:HS}
\end{equation}
The GADI framework \cref{eq:GADI for Ax=b} becomes the GADI-HS scheme
\begin{equation}\label{eq:GADI-HS}
\left\{\begin{aligned}
&(\alpha I+H)x^{(k+\frac{1}{2})}=(\alpha I-S)x^{(k)}+b,\\
&(\alpha I+S)x^{(k+1)}=(S-(1-\omega)\alpha I)x^{(k)}+(2-\omega)\alpha
x^{(k+\frac{1}{2})}.
\end{aligned}\right.
\end{equation}
In the implementation of the GADI-HS method, the two linear systems of \cref{eq:GADI-HS} are both
solved by the direct method.
From the properties of H and S, one can find that the GADI-HS method
satisfies the conditions of \Cref{th:GADI Conver analysis}.
Therefore, the GADI-HS scheme is convergent to the unique
solution $x^{*}$ of the linear \cref{eq:Ax=b}, as
\cref{th:GADI-HS Conver analysis} shows.
\begin{theorem}\label{th:GADI-HS Conver analysis}
The GADI-HS method defined by \cref{eq:GADI-HS} is convergent to the unique
solution $x^{*}$ of the linear equation \cref{eq:Ax=b} for any $\alpha>0$ and
$\omega\in[0,2)$. Moreover, the spectral radius $\rho(T(\alpha,\omega))$ satisfies
	\begin{equation*}
	\rho(T(\alpha,\omega))<1,
	\end{equation*}
	where
	\begin{equation}\label{eq:HSiterMatrix}
	T(\alpha,\omega)=(\alpha I+S)^{-1}(\alpha I+H)^{-1}(\alpha ^{2}
	I+HS-(1-\omega)\alpha A).
	\end{equation}
\end{theorem}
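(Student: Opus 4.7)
The plan is to recognize the GADI-HS iteration as precisely the specialization of the general GADI framework \eqref{eq:GADI for Ax=b} to the splitting $M=H$, $N=S$, and then to appeal to Theorem \ref{th:GADI Conver analysis} as a black box. So the main task reduces to verifying the two matching conditions: (i) that the hypotheses of Theorem \ref{th:GADI Conver analysis} hold with this particular choice of $M$ and $N$, and (ii) that the iteration matrix $T(\alpha,\omega)$ displayed in \eqref{eq:HSiterMatrix} is exactly the matrix $T'(\alpha,\omega)$ from \eqref{eq:T'} under this specialization.

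First I would check (i). By assumption $A\in\mathbb{C}^{n\times n}$ is positive definite (and non-Hermitian), so $H=(A+A^{*})/2$ is Hermitian positive definite and $S=(A-A^{*})/2$ is skew-Hermitian, with $A=H+S$. This is precisely the first alternative hypothesis of Theorem \ref{th:GADI Conver analysis}, so that theorem applies verbatim with $M\leftarrow H$ and $N\leftarrow S$.

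Next I would verify (ii) by direct substitution of $M=H$ and $N=S$ into the general iteration matrix \eqref{eq:T'}, which yields
\begin{equation*}
T'(\alpha,\omega)=(\alpha I+S)^{-1}(\alpha I+H)^{-1}\bigl(\alpha^{2}I+HS-(1-\omega)\alpha A\bigr),
\end{equation*}
matching \eqref{eq:HSiterMatrix} on the nose. Consequently, Theorem \ref{th:GADI Conver analysis} gives at once $\rho(T(\alpha,\omega))\le \tfrac{1}{2}\bigl(\rho(T(\alpha))+1\bigr)<1$ for every $\alpha>0$ and $\omega\in[0,2)$, and hence the GADI-HS scheme converges to the unique solution $x^{*}$ of \eqref{eq:Ax=b}.

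Since this is essentially a corollary, I do not anticipate any real obstacle; the only care needed is to ensure that the algebraic identity between the two-step update in \eqref{eq:GADI-HS} and the one-step recursion $x^{(k+1)}=T'(\alpha,\omega)x^{(k)}+G(\alpha,\omega)$ (which is derived in the proof of Theorem \ref{th:GADI Conver analysis}) is inherited without modification when $M=H$, $N=S$. This is purely a matter of relabeling and requires no new calculation beyond checking that the elimination of $x^{(k+1/2)}$ goes through identically, which it does because the step uses only invertibility of $\alpha I+H$ and $\alpha I+S$ and the commutation $(\alpha I+H)^{-1}(\alpha I-H)=(\alpha I-H)(\alpha I+H)^{-1}$, both of which hold in this setting.
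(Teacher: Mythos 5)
Your proposal is correct and follows exactly the route the paper takes: the paper offers no separate proof of this theorem, stating only that $H$ and $S$ satisfy the hypotheses of \Cref{th:GADI Conver analysis} (the first alternative, since $A$ is positive definite with Hermitian part $H$ and skew-Hermitian part $S$), so the result is an immediate corollary. Your additional check that \cref{eq:HSiterMatrix} coincides with \cref{eq:T'} under $M=H$, $N=S$ is the right (implicit) verification and introduces no gap.
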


The GADI-HS method is a generalized Hermitian and skew-Hermitian (H-S) splitting scheme. We can obtain different H-S splitting methods through varying $\omega$.
	
\textbf{PRS (HSS) scheme}\quad When $\omega=0$ in GADI-HS method \cref{eq:GADI-HS}, we have the PRS
method immediately \cite{peaceman1955numerical}, equivalent to the HSS
method \cite{bai2003hermitian}.
Given an initial guess $x^{0}$ and $\alpha>0$, for $k=0,1,2,\ldots$, until
$x^{(k)}$ is convergent,
\begin{equation}
\left\{\begin{aligned}
& (\alpha I+H)x^{(k+\frac{1}{2})}=(\alpha I-S)x^{(k)}+b,\\
&(\alpha I+S)x^{(k+1)}=(\alpha I-H)x^{(k+\frac{1}{2})}+b.
\label{eq:HSS}
\end{aligned}\right.
\end{equation}
The iterative matrix of PRS (HSS) is
\begin{equation}\label{eq:M(alpha)}
M(\alpha)=(\alpha I+S)^{-1}(\alpha I-H)(\alpha I+H)^{-1}(\alpha I-S).
\end{equation}

\textbf{DRS scheme}\quad
When $\omega=1$ in GADI-HS method \cref{eq:GADI-HS}, we have the DRS iterative method  \cite{douglas1956numerical}.
Given an initial guess $x^{(0)}$ and $\alpha>0$,~for $k = 0, 1, \ldots$, until ${x^{(k)}}$ is convergent,
\begin{equation}
\left\{\begin{aligned}
&(\alpha I+H)x^{(k+\frac{1}{2})}=(\alpha I-S)x^{(k)}+b,\\
&(\alpha I+S)x^{(k+1)}=Sx^{(k)}+\alpha x^{(k+\frac{1}{2})}.
\label{eq:DRS}
\end{aligned}\right.
\end{equation}
The iterative matrix of DRS is
\begin{equation*}\label{eq:M'(alpha)}
M'(\alpha)=(\alpha I+S)^{-1}(\alpha I+H)^{-1}(\alpha ^{2}
I+HS).
\end{equation*}

\Cref{th:GADI-HS Conver analysis} has offered a preliminary conclusion of convergence. Next we analyze the convergent speed of the GADI-HS scheme through exploring
a deep relationship on spectral radii between $T(\alpha,\omega)$
in \cref{eq:T(alpha)} and $M(\alpha)$ in \cref{eq:M(alpha)}.

\begin{theorem}\label{th:|mu| and |lamba|}
Let $A\in \mathbb{C}^{n\times n}$ be a sparse non-Hermitian and positive
definite matrix, and let $H$ and $S$ defined by \cref{eq:HS} be its Hermitian and
skew-Hermitian parts. Assume that
$\alpha>0$, $\omega \in [0,2)$, $\lambda_{k},\mu_k$ are the eigenvalues of
$M(\alpha)$ (see \cref{eq:M(alpha)}) and $T(\alpha,\omega)$ (see \cref{eq:HSiterMatrix}).
Let $\lambda_{k}=a+bi$, where $i=\sqrt{-1}$.
\\
(i) If $|\lambda_k|^2\leq a$, then $$\rho(M(\alpha))\leq \rho(T(\alpha,\omega))<1.$$
(ii) If $a<|\lambda_k|^2~and~ 0<\omega<\dfrac{4a^2-4a+4b^2}{(1-a)^2+b^2}$, then $$\rho(T(\alpha,\omega))<\rho(M(\alpha))<1.$$
\end{theorem}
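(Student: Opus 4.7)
The plan is to reduce the theorem to a pointwise comparison of paired eigenvalues and then carry out a real-part / imaginary-part computation. The proof of \Cref{th:GADI Conver analysis} already supplies the key algebraic identity: under the H-S splitting, the GADI iteration matrix $T'(\alpha,\omega)$ coincides with $T(\alpha,\omega)$ from \cref{eq:HSiterMatrix} and the auxiliary matrix $T(\alpha)$ from \cref{eq:T(alpha)} becomes precisely $M(\alpha)$ from \cref{eq:M(alpha)}, so that
\[
T(\alpha,\omega) \;=\; \tfrac{1}{2}\bigl[(2-\omega)\,M(\alpha)+\omega I\bigr].
\]
Because $T(\alpha,\omega)$ is an affine function of $M(\alpha)$, their spectra pair up one-to-one via $\mu_k=\tfrac{1}{2}[(2-\omega)\lambda_k+\omega]$, reducing the whole theorem to a scalar comparison between $|\mu_k|$ and $|\lambda_k|$.

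Writing $\lambda_k=a+bi$, I would evaluate $|\mu_k|^2$ as in \cref{eq:|mu_k|}, subtract $|\lambda_k|^2=a^2+b^2$, and collect powers of $\omega$. The difference factors cleanly as
\[
4\bigl(|\mu_k|^2-|\lambda_k|^2\bigr) \;=\; \omega\Bigl\{[(1-a)^2+b^2]\,\omega \;-\; 4\bigl(|\lambda_k|^2-a\bigr)\Bigr\},
\]
so the sign of $|\mu_k|-|\lambda_k|$ is controlled entirely by a scalar linear inequality in $\omega$. In case (i), the hypothesis $|\lambda_k|^2\leq a$ makes both the coefficient $(1-a)^2+b^2$ and the constant $-(|\lambda_k|^2-a)$ nonnegative, so the braced factor is $\geq 0$ for every $\omega\in[0,2)$, giving $|\mu_k|\geq|\lambda_k|$. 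Applied to the index realizing $\rho(M(\alpha))$, this yields $\rho(M(\alpha))\leq\rho(T(\alpha,\omega))$, and \Cref{th:GADI-HS Conver analysis} provides the strict upper bound $\rho(T(\alpha,\omega))<1$.

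In case (ii), $|\lambda_k|^2>a$ makes the constant term strictly negative, and solving the resulting linear inequality shows that the braced factor is negative precisely when $\omega<(4a^2-4a+4b^2)/[(1-a)^2+b^2]$; together with $\omega>0$ this gives $|\mu_k|<|\lambda_k|$ and hence $\rho(T(\alpha,\omega))<\rho(M(\alpha))$, while \Cref{lem:T(alpha)1} closes the chain with $\rho(M(\alpha))<1$. The substantive computations are routine; the only places I expect to pause are (a) verifying that the above factorization produces exactly the threshold form quoted in the statement and (b) arguing that the pointwise eigenvalue comparison transfers to a spectral-radius comparison—which it does, since the same affine map $\lambda\mapsto\tfrac{1}{2}[(2-\omega)\lambda+\omega]$ relates every eigenvalue, so the index maximizing $|\lambda_k|$ still witnesses the claimed inequality for $\rho$.
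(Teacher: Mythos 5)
Your proposal is correct and follows essentially the same route as the paper: both exploit the affine identity $T(\alpha,\omega)=\tfrac{1}{2}[(2-\omega)M(\alpha)+\omega I]$ from the proof of \Cref{th:GADI Conver analysis} to pair eigenvalues, compute $4|\mu_k|^2-4|\lambda_k|^2=[(1-a)^2+b^2]\omega^2-(4a^2-4a+4b^2)\omega$, and read off the sign from the hypotheses in each case. The only cosmetic differences are that the paper additionally notes $a^2+b^2<1$ to confirm the threshold in case (ii) lies below $2$, and that in case (ii) the spectral-radius transfer is cleanest via the index maximizing $|\mu_k|$ rather than $|\lambda_k|$; neither affects correctness.
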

\begin{proof}
By using \eqref{eq:|mu_k|}, we have
\begin{equation*}
\begin{aligned}
4|\mu_k|^2-4|\lambda_{k}|^2=&~(2-\omega)^2 a^2+\omega^2+2(2-\omega)a\omega+(2-\omega)^2b^2-
4(a^2+b^2)\\
= &~\omega^2+4\omega a-2\omega^2
a+a^2\omega^2+b^2\omega^2-4a^2\omega-4b^2\omega\\
= &~[(1-a)^2+b^2]\omega^2-(4a^2-4a+4b^2)\omega.
\end{aligned}
\end{equation*}

(i) As $\omega$ is a nonnegative constant, when $|\lambda_k|^2\leq a$, i.e., $ 4a^2-4a+4b^2\leq 0$, then
\begin{equation*}
|\lambda_k|\leq |\mu_k|.
\end{equation*}
We obtain
\begin{equation*}
\quad\rho(M(\alpha))\leq \rho(T(\alpha,\omega))<1.
\end{equation*}

(ii) When $a<|\lambda_{k}|^2$ i.e., $4a^2-4a+4b^2>0$ and we have $a^2+b^2<1$, then
\begin{equation*}
\begin{aligned}
&0< \omega< \frac{4a^2-4a+4b^2}{(1-a)^2+b^2}<2.\\
\end{aligned}
\end{equation*}
Thus
$$
[(1-a)^2+b^2]\omega^2-(4a^2-4a+4b^2)\omega<0,
$$
which is equivalent to $[(1-a)^2+b^2]\omega< (4a^2-4a+4b^2),$
i.e., $|\mu_k|<	|\lambda_k|.$
Subsequently, we have
\begin{equation*}
\rho(T(\alpha,\omega))<\rho(M(\alpha))<1.
\end{equation*}
The proof is completed.
\end{proof}

\begin{remark}
\Cref{th:|mu| and |lamba|} proves that the GADI-HS scheme converges faster
than or equally to the HSS method.
In case (i), the GADI-HS method is simplified to the HSS method by taking
$\omega=0$.
In case (ii), the spectral radius of the GADI-HS scheme is smaller than the
HSS method, which means the GADI-HS method has a faster convergent speed.
\end{remark}

\subsection{Practical GADI-HS scheme for linear algebraic equations}
\label{sec:pra_GADI-HS}

\subsubsection{Practical GADI-HS scheme}

The GADI-HS method \cref{eq:GADI-HS} requires solving two linear systems
with coefficient matrices $(\alpha I +H)$ and $(\alpha I +S)$.
Directly solving these linear equations would result in heavy computational
cost, especially for large-scale systems.
An approach to overcome this problem is developing efficient iterative methods
to calculate the subproblems.

Concretely, we employ iterative methods to approximately solve $\tilde x^{(k+\frac{1}{2})}$ by
\begin{equation}\label{eq:pra GADI-HS}
	(\alpha I+H)\tilde x^{(k+\frac{1}{2})}\approx(\alpha
	I-S)\tilde x^{(k)}+b
\end{equation}
and approximately solve $\tilde x^{(k+1)}$ through
\begin{equation}\label{eq:pra GADI-HS2}
(\alpha I+S)(\tilde x^{(k+1)}-\tilde x^{(k)})\approx\alpha (2-\omega)(\tilde x^{(k+\frac{1}{2})}-\tilde x^{(k)}).
\end{equation}
\Cref{alg:pra GADI-HS} summarizes the above practical GADI-HS method. Here, we use the conjugate gradient (CG) and the
CG method on normal coefficient equation (CGNE) as the inner iterative methods in
\Cref{alg:pra GADI-HS}. When $\omega=0$, the practical GADI-HS method becomes
the IHSS (inexact HSS) method\,\cite{ bai2011hermitian, bai2003hermitian}.
\begin{algorithm}
		\footnotesize{
\caption{Practical GADI-HS method}\label{alg:pra GADI-HS}
\begin{algorithmic}[1]
	\REQUIRE $\alpha, \omega, \varepsilon, H, S, \tilde r^{(0)}, \tilde \varepsilon_{k}, \tilde{\eta}_k, K_{max}, k=0, \tilde{x}^{(0)}=0$.
	\WHILE {$ \|\tilde r^{(k)}\|^{2}_2 > \|\tilde r^{(0)}\|^{2}_2\varepsilon $ or $ k<K_{max} $}
	\STATE $\tilde r^{(k)}=b-A\tilde x^{(k)}$;
	\STATE	Solve $(\alpha I +H)\tilde z^{(k)}\approx \tilde r^{(k)}$ such that $\tilde p^{(k)}=\tilde r^{(k)}-(\alpha I +H)\tilde z^{(k)}$ satisfies $\|\tilde p^{(k)}\| \leq \tilde\varepsilon _{k} \|\tilde r^{(k)}\|$;
	\STATE Solve $(\alpha I+S)\tilde y^{(k)}\approx \alpha (2-\omega)\tilde z^{(k)}$
	such that $\tilde q^{(k)}=\alpha (2-\omega)\tilde z^{(k)}-(\alpha I+S)\tilde y^{(k)}$ satisfies $\|\tilde q^{(k)}\|\leq \tilde\eta_{k}\|\alpha(2-\omega) \tilde z^{(k)}\|$;
	\STATE	Compute $\tilde x^{(k+1)}=\tilde x^{(k)}+\tilde y^{(k)}$;
	\STATE	$k=k+1$.
	\ENDWHILE
\end{algorithmic}
}
\end{algorithm}

\subsubsection{Convergence of Practical GADI-HS}

In this section, we discuss the convergent properties of the practical GADI-HS method.
It is shown that \cref{eq:pra GADI-HS} and \cref{eq:pra GADI-HS2} are
equivalent to the symmetric form of two-step splitting iteration, which can lead to the
subsequent conclusion.
\begin{lemma}\label{le:pra GADI-HS}
	For linear system \cref{eq:Ax=b}, assume that $A=M_1-N_1=M_2-N_2$. Let $\{\tilde x^{(k)}\}$ be an iterative sequence defined by
	\begin{equation}\label{eq:pra GADI-HS splitting}
	\begin{aligned}
	&\tilde x^{(k+\frac{1}{2})}=\tilde x^{(k)}+\tilde z^{(k)},~~M_1\tilde
	z^{(k)}=\tilde r^{(k)}+\tilde p^{(k)},\\
	&\tilde x^{(k+1)}=\tilde z^{(k+\frac{1}{2})}+\tilde
	x^{(k+\frac{1}{2})},~~M_2\tilde z^{(k+\frac{1}{2})} =\tilde
	r^{(k+\frac{1}{2})}+\tilde q^{(k+\frac{1}{2})},
	\end{aligned}
	\end{equation}
	such that
	$$\|\tilde p^{(k)}\| \leq \tilde \varepsilon _{k} \|\tilde
	r^{(k)}\|, \|\tilde q^{(k+\frac{1}{2})}\| \leq \tilde \eta _{k} \|\tilde r^{(k+\frac{1}{2})}\|,$$ where $\tilde \varepsilon _{k}>0,~ \tilde \eta _{k}>0$, and $\tilde r^{(k)}=b-A\tilde
	x^{(k)},~~\tilde r^{(k+\frac{1}{2})}=b-A\tilde x^{(k+\frac{1}{2})}$.  Then
	\begin{equation}\label{eq:pra GADI-HS scheme}
	\tilde x^{(k+1)}
	=M_2^{-1}N_2M_1^{-1}N_1\tilde x^{k}+M_2^{-1}(N_2M_1^{-1}+I)b+M_2^{-1}(N_2M_1^{-1}\tilde p^{(k)}+\tilde q^{(k+\frac{1}{2})}).
	\end{equation}
\end{lemma}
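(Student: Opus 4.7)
The plan is to do a direct algebraic computation in two stages, eliminating first the half-step iterate and then substituting into the full-step. The hypotheses $A=M_1-N_1=M_2-N_2$ are the essential algebraic identities that let me rewrite $I-M_j^{-1}A$ as $M_j^{-1}N_j$, and the inexactness bounds on $\tilde p^{(k)}$ and $\tilde q^{(k+1/2)}$ play no role in deriving the identity \eqref{eq:pra GADI-HS scheme} itself (they are structural data that will matter only when one turns to convergence estimates later).

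First I would take the half-step defining equation $M_1\tilde z^{(k)} = \tilde r^{(k)} + \tilde p^{(k)}$ and, using $\tilde r^{(k)} = b - A\tilde x^{(k)}$, solve for $\tilde z^{(k)} = M_1^{-1}(b - A\tilde x^{(k)} + \tilde p^{(k)})$. Adding this to $\tilde x^{(k)}$ gives
\begin{equation*}
\tilde x^{(k+\tfrac{1}{2})} = (I - M_1^{-1}A)\tilde x^{(k)} + M_1^{-1}b + M_1^{-1}\tilde p^{(k)}.
\end{equation*}
Since $A = M_1 - N_1$ implies $I - M_1^{-1}A = M_1^{-1}N_1$, this collapses to
\begin{equation*}
\tilde x^{(k+\tfrac{1}{2})} = M_1^{-1}N_1 \tilde x^{(k)} + M_1^{-1}b + M_1^{-1}\tilde p^{(k)}.
\end{equation*}

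Next I would repeat the same manipulation for the full step: from $M_2 \tilde z^{(k+\frac{1}{2})} = \tilde r^{(k+\frac{1}{2})} + \tilde q^{(k+\frac{1}{2})}$ with $\tilde r^{(k+\frac{1}{2})} = b - A\tilde x^{(k+\frac{1}{2})}$ and $A = M_2 - N_2$, I would obtain
\begin{equation*}
\tilde x^{(k+1)} = M_2^{-1} N_2 \tilde x^{(k+\tfrac{1}{2})} + M_2^{-1} b + M_2^{-1}\tilde q^{(k+\tfrac{1}{2})}.
\end{equation*}

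Finally I would substitute the expression for $\tilde x^{(k+\frac{1}{2})}$ into this equation and group terms by their origin (iterate, source $b$, perturbations). The $\tilde x^{(k)}$ contribution is $M_2^{-1}N_2 M_1^{-1}N_1 \tilde x^{(k)}$; the $b$-contribution aggregates as $M_2^{-1}N_2 M_1^{-1}b + M_2^{-1}b = M_2^{-1}(N_2 M_1^{-1} + I)b$; and the residual perturbation terms assemble as $M_2^{-1}(N_2 M_1^{-1}\tilde p^{(k)} + \tilde q^{(k+\frac{1}{2})})$. This reproduces \eqref{eq:pra GADI-HS scheme} exactly. There is really no substantive obstacle here — the only spots to be careful about are (i) correctly applying $A = M_j - N_j$ on the appropriate side so that $I - M_j^{-1}A = M_j^{-1}N_j$ (rather than $N_j M_j^{-1}$, which would be wrong for the left-multiplied form), and (ii) tracking that $\tilde q^{(k+\frac{1}{2})}$, not $\tilde q^{(k)}$, appears in the final line, consistent with the half-step indexing of the second inexact solve.
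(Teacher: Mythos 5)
Your proof is correct and follows essentially the same route as the paper's: solve each inexact half-step for the update, use $A=M_j-N_j$ to rewrite $I-M_j^{-1}A$ as $M_j^{-1}N_j$, and substitute the half-step expression into the full step. The only difference is that the paper's Appendix A.1 additionally writes down the concrete splitting matrices $M_1,N_1,M_2,N_2$ for the GADI-HS scheme and verifies that Algorithm 2.1 is equivalent to the abstract two-step form \cref{eq:pra GADI-HS splitting}, but that material is not needed for the identity \cref{eq:pra GADI-HS scheme} as literally stated, which your argument establishes completely.
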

\begin{proof}
The proof is in Appendix A.1.
\end{proof}

Based on \Cref{le:pra GADI-HS}, we can prove the convergence of the practical
GADI-HS.

\begin{theorem}\label{th:pra GADI-HS conver}
Let $A\in \mathbb{C}^{n\times n}$ be a positive definite matrix, $H$ and $S$
in \cref{eq:HS} be its Hermitian and skew-Hermitian parts, $\alpha$ be
a positive constant, and $\omega\in [0,2)$.  Assume that $\{\tilde x^{(k)}\}$ is
the iterative sequence generated by \Cref{le:pra GADI-HS}, and $\tilde
x^{*}$ is the exact solution of \cref{eq:Ax=b}. Then
\begin{equation*}
	\|\tilde x^{(k+1)}-\tilde x^{*}\|_{M_2}\leq
	(\rho(T(\alpha,\omega))+\theta_1\tilde \varepsilon_{k}(\gamma+\theta_2
	+\theta_1\gamma\eta_{k})+\theta_1\gamma\eta_{k})\cdot\|\tilde x^{(k)}-\tilde x^{*}\|_{M_2}~,k=0,1,2,
\end{equation*}
where $M_1, N_1, M_2, N_2$ are defined in \Cref{le:pra GADI-HS}, and
$$T(\alpha,\omega)=N_2M_1^{-1}N_1M_2^{-1}, ~\theta_1=\|AM_1^{-1}\|,~\theta_2=\|AM_2^{-1}\|,~\gamma=\|M_2M_1^{-1}\|.$$  Moreover, if
\begin{equation}\label{eq:condition}
\rho(T(\alpha,\omega))+\theta_2 \varepsilon_{max}(\gamma+\theta_1+\theta_2\gamma\eta_{max})+\theta_2\gamma\eta_{max}<1,
\end{equation}
then $\{\tilde x^{(k)}\}$ is convergent to $\tilde x^{*}$, where
\begin{equation*}
	\eta_{max}=\max\limits_{k}\left\lbrace \tilde \eta_{k}
	\right\rbrace,~~\varepsilon_{max}=\max\limits_{k}\left\lbrace \tilde \varepsilon
	_{k} \right\rbrace.
\end{equation*}
\end{theorem}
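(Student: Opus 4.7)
The plan is to convert the closed-form recursion of Lemma \ref{le:pra GADI-HS} into an error recurrence in the $M_2$-norm and then absorb the two inner-solver residuals $\tilde p^{(k)}$ and $\tilde q^{(k+\frac{1}{2})}$ into multiples of $\|e^{(k)}\|_{M_2}$, where $e^{(k)} := \tilde x^{(k)} - \tilde x^*$. Using $A\tilde x^* = b$ together with the splittings $A = M_1 - N_1 = M_2 - N_2$, one verifies that $\tilde x^*$ is the fixed point of the perturbation-free part of \cref{eq:pra GADI-HS scheme}, so subtracting yields the clean identity
\begin{equation*}
e^{(k+1)} = M_2^{-1}N_2 M_1^{-1}N_1\, e^{(k)} + M_2^{-1}\bigl(N_2 M_1^{-1}\tilde p^{(k)} + \tilde q^{(k+\frac{1}{2})}\bigr).
\end{equation*}
Multiplying by $M_2$ and using the similarity $M_2(M_2^{-1}N_2M_1^{-1}N_1)M_2^{-1} = N_2 M_1^{-1} N_1 M_2^{-1} = T(\alpha,\omega)$ puts the homogeneous part into the operator $T(\alpha,\omega)$ named in the theorem. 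Taking norms and replacing $\|T(\alpha,\omega)\|$ by $\rho(T(\alpha,\omega))$ via Householder's norm-equivalence theorem (the standard device in HSS-type convergence arguments) then gives
\begin{equation*}
\|e^{(k+1)}\|_{M_2} \le \rho(T(\alpha,\omega))\,\|e^{(k)}\|_{M_2} + \|N_2 M_1^{-1}\|\,\|\tilde p^{(k)}\| + \|\tilde q^{(k+\frac{1}{2})}\|.
\end{equation*}

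Next I would convert the two residual factors into multiples of $\|e^{(k)}\|_{M_2}$. Since $\tilde r^{(k)} = -A e^{(k)}$, the first tolerance gives $\|\tilde p^{(k)}\| \le \tilde\varepsilon_k \|A e^{(k)}\| \le \tilde\varepsilon_k \theta_2 \|e^{(k)}\|_{M_2}$. For the half-step residual, the first half-step of \cref{eq:pra GADI-HS splitting} yields $e^{(k+\frac{1}{2})} = M_1^{-1} N_1 e^{(k)} + M_1^{-1}\tilde p^{(k)}$, and I would then combine the rewrite $A M_1^{-1} N_1 = A - A M_1^{-1} A$ with the definitions $\theta_1 = \|AM_1^{-1}\|$, $\theta_2 = \|AM_2^{-1}\|$, and $\gamma = \|M_2 M_1^{-1}\|$ (routing $\|M_2 e^{(k)}\|$ through $M_1^{-1}$) to bound $\|A e^{(k+\frac{1}{2})}\|$ linearly in $\|e^{(k)}\|_{M_2}$ and $\|\tilde p^{(k)}\|$. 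The tolerance on the second inner solve then gives $\|\tilde q^{(k+\frac{1}{2})}\|$ controlled by $\tilde\eta_k$ times a linear combination of $\theta_1\gamma\|e^{(k)}\|_{M_2}$ and $\|\tilde p^{(k)}\|$. Substituting back and collecting coefficients of $\|e^{(k)}\|_{M_2}$ reproduces exactly the single-step contraction factor
\begin{equation*}
\rho(T(\alpha,\omega)) + \theta_1 \tilde\varepsilon_k(\gamma + \theta_2 + \theta_1\gamma\eta_k) + \theta_1\gamma\eta_k
\end{equation*}
stated in the theorem. Replacing $\tilde\varepsilon_k,\tilde\eta_k$ by their uniform suprema $\varepsilon_{\max},\eta_{\max}$, condition \cref{eq:condition} forces the resulting factor to stay strictly below $1$ uniformly in $k$, and a standard geometric-contraction argument delivers $\|e^{(k)}\|_{M_2} \to 0$.

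The main obstacle will be the constant-matching bookkeeping in the second step: because $A = M_1 - N_1 = M_2 - N_2$ permits several equivalent rewrites of $N_2 M_1^{-1}$ and $A M_1^{-1} N_1$, only one particular routing (channelling $N_2 = M_2 - A$ through $\gamma$ and $\theta_1$, and keeping $\|A e^{(k)}\|$ estimates tied to $\theta_2$) produces precisely the combination of $\theta_1,\theta_2,\gamma$ displayed above, and care is required not to leave stray norms such as $\|M_1 M_2^{-1}\|$ or $\|N_1 M_2^{-1}\|$ floating around, since these are not among the listed constants. A subsidiary technical point is the replacement of $\|T(\alpha,\omega)\|$ by $\rho(T(\alpha,\omega))$ in the leading term; this is the standard Householder/equivalent-norm device familiar from the HSS, PSS, and IHSS convergence analyses cited earlier in the paper.
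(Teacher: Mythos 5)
Your proposal follows essentially the same route as the paper's proof: subtract the fixed-point identities for $\tilde x^{*}$ under both splittings from \cref{eq:pra GADI-HS scheme}, pass to the $M_2$-induced norm so that the homogeneous part becomes $T(\alpha,\omega)=N_2M_1^{-1}N_1M_2^{-1}$, bound $\|\tilde p^{(k)}\|$ and $\|\tilde q^{(k+\frac{1}{2})}\|$ through the inner tolerances and $\|\tilde r^{(k)}\|\le\theta_2\|\tilde x^{(k)}-\tilde x^{*}\|_{M_2}$, collect coefficients, and close with a geometric-contraction argument under \cref{eq:condition}. Three bookkeeping remarks. First, your own intermediate bound $\|\tilde p^{(k)}\|\le\tilde\varepsilon_k\theta_2\|\tilde x^{(k)}-\tilde x^{*}\|_{M_2}$ (identical to the paper's) propagates to a per-step factor $\rho(T(\alpha,\omega))+\theta_2\tilde\varepsilon_k(\gamma+\theta_1+\theta_2\gamma\tilde\eta_k)+\theta_2\gamma\tilde\eta_k$, with $\theta_2$ in the leading positions; this agrees with condition \cref{eq:condition} and with the paper's derivation, but not with the displayed inequality in the theorem statement (whose $\theta_1\leftrightarrow\theta_2$ placement appears to be a typo), so your claim that the collection ``reproduces exactly'' that display cannot close as written. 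Second, the paper does leave the ``stray'' norm $\|N_1M_2^{-1}\|$ in play and disposes of it separately via $\|N_1M_2^{-1}\|\le 1$ (which holds because $N_1M_2^{-1}=-(\alpha I+S)^{-1}[S-(1-\omega)\alpha I]$ with $S$ skew-Hermitian and $\omega\in[0,2)$); your alternative routing through $AM_1^{-1}N_1=A-AM_1^{-1}A$ would instead produce $(1+\theta_1)\theta_2$ where the statement has $\gamma\theta_2$, again missing the listed constants. Third, the replacement of $\|T(\alpha,\omega)\|$ by $\rho(T(\alpha,\omega))$ is performed by the paper without justification in the last line of its chain; your explicit appeal to the equivalent-norm device is, if anything, the more careful treatment, though it should be acknowledged that the norm in which this holds is then only $\epsilon$-close to the $M_2$-norm.
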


\begin{proof}
Since $x^{*}$ is the exact solution of \eqref{eq:Ax=b}, then
\begin{equation}\label{eq:two x*}
\begin{aligned}
\tilde x^{*}&=M_1^{-1}N_1\tilde x^*+M_1^{-1}b,\\
\tilde x^{*}&=M_2^{-1}N_2\tilde x^*+M_2^{-1}b=M_2^{-1}N_2M_1^{-1}N_1\tilde x^*+M_2^{-1}(N_2M_1^{-1}+I)b.
\end{aligned}
\end{equation}
From \cref{eq:pra GADI-HS scheme}, we have
\begin{equation}\label{eq:x^k+1/2andx^k+1}
\begin{aligned}
&\tilde x^{(k+\frac{1}{2})}=M_1^{-1}N_1\tilde x^{(k)}+M_1^{-1}b+M_1^{-1}\tilde p^{(k)},\\
&\tilde x^{(k+1)}=M_2^{-1}N_2M_1^{-1}N_1\tilde x^{k}+M_2^{-1}(N_2M_1^{-1}+I)b+M_2^{-1}(N_2M_1^{-1}\tilde p^{(k)}+\tilde q^{(k+\frac{1}{2})}).
\end{aligned}
\end{equation}
Subtracting the two equations of \cref{eq:two x*} from those of
\cref{eq:x^k+1/2andx^k+1} reads
\begin{equation}\label{eq:x^k+1-x*andx^k+1/2-x*}
\begin{aligned}
\|\tilde x^{(k+\frac{1}{2})}-\tilde x^*\|_{M_2}&=\|M_1^{-1}N_1(\tilde x^{(k)}-\tilde x^*)+M_1^{-1}\tilde p^{(k)}\|_{M_2}\\
&\leq\|M_2M_1^{-1}N_1M_2^{-1}\|\|\tilde x^{(k)}-\tilde x^*\|_{M_2}+\|M_2M_1^{-1}\|\|\tilde p^{(k)}\|,\\
\|\tilde x^{(k+1)}-\tilde x^{*}\|_{M_2}&=\|M_2^{-1}N_2M_1^{-1}N_1(\tilde x^{(k)}-\tilde x^*)+M_2^{-1}(N_2M_1^{-1}\tilde p^{(k)}+\tilde q^{(k+\frac{1}{2})})\|_{M_2}\\
&\leq\|N_2M_1^{-1}N_1M_2\|\|\tilde x^{(k)}-\tilde x^*\|_{M_2}+\|N_2M_1^{-1}\|\|\tilde p^{(k)}\|+\|\tilde q^{(k+\frac{1}{2})}\|.
\end{aligned}
\end{equation}
Note that
\begin{equation*}
\|\tilde r^{(k)}\|=\|b-A\tilde x^{(k)}\|=\|A(\tilde x^{*}-\tilde x^{(k)})\|\leq
\|AM_2^{-1}\|\|\tilde x^{(k)}-\tilde x^{(*)}\|_{M_2}
\end{equation*}
and
\begin{equation*}
\begin{aligned}
\|\tilde r^{(k+\frac{1}{2})}\|&=
\|b-A\tilde x^{(k+\frac{1}{2})}\|=\|A(\tilde x^{*}-\tilde x^{(k+\frac{1}{2})})\|\leq
\|AM_2^{-1}\|\|\tilde x^{(k+\frac{1}{2})}-\tilde x^{(*)}\|_{M_2}\\
&\leq \|AM_2^{-1}\|
(\|M_2M_1^{-1}N_1M_2^{-1}\|\|\tilde x^{(k)}-\tilde x^*\|_{M_2}+\|M_2M_1^{-1}\|\|\tilde p^{(k)}\|).
\end{aligned}
\end{equation*}
By the definitions of $\{ \tilde p^{(k)}\}$ and $\{\tilde q^{(k+\frac{1}{2})}\}$
in \Cref{le:pra GADI-HS}, we have
\begin{equation}
\|\tilde p^{(k)}\|\leq
\tilde \varepsilon_{k}\|r^{(k)}\|\leq
\tilde \varepsilon_{k}\|AM_2^{-1}\|\|\tilde x^{(k)}-\tilde x^{(*)}\|_{M_2}
\label{eq:p^k norm}
\end{equation}
and
\begin{equation}
\begin{aligned}
\|\tilde q^{(k+\frac{1}{2})}\|\leq&\tilde \eta_{k}\|\tilde r^{(k+\frac{1}{2})}\|\leq \tilde \eta_{k}\|AM_2^{-1}\|(\|M_2M_1^{-1}N_1M_2^{-1}\|\\
&+\tilde\varepsilon_{k}\|M_2M_1^{-1}\|\|AM_2^{-1}\|)\|\tilde x^{(k)}-\tilde x^*\|_{M_2}.\\
\end{aligned}
\label{eq:q^k+1/2 norm}
\end{equation}
Substituting \cref{eq:p^k norm} and \cref{eq:q^k+1/2 norm} into the second
inequality of \cref{eq:x^k+1-x*andx^k+1/2-x*} yields
\begin{equation*}
\begin{aligned}
&\|\tilde x^{(k+1)}-\tilde x^{*}\|_{M_2}
\leq\|N_2M_1^{-1}N_1M_2^{-1}\|\|\tilde x^{(k)}-\tilde x^*\|_{M_2}+\|N_2M_1^{-1}\|\|\tilde p^{(k)}\|+\|\tilde q^{(k+\frac{1}{2})}\|\\
\leq&\|N_2M_1^{-1}N_1M_2^{-1}\|\|\tilde x^{(k)}-\tilde x^*\|_{M_2}+\tilde \varepsilon_{k}\|N_2M_1^{-1}\|\|AM_2^{-1}\|\|\tilde x^{(k)}-\tilde x^{(*)}\|_{M_2}\\
&+\tilde \eta_{k}\|AM_2^{-1}\|(\|M_2M_1^{-1}N_1M_2^{-1}\|
+\tilde \varepsilon_{k}\|M_2M_1^{-1}\|\|AM_2^{-1}\|)\|\tilde x^{(k)}-\tilde x^*\|_{M_2}\\
\leq & (\|T(\alpha,\omega)\|+\theta_2\tilde\varepsilon_k\|N_2M_1^{-1}\|+\theta_2\tilde\eta_{k}\gamma\|N_1M_2^{-1}\|+\theta_2^2\gamma\tilde\varepsilon_{k}\tilde\eta_{k})\|\tilde x^{(k)}-\tilde x^{*}\|_{M_2}\\
\leq&(\|T(\alpha,\omega)\|+\theta_2\tilde \varepsilon_k\|(M_2-A)M_1^{-1}\|+\theta_2\tilde \eta_{k}\gamma+\theta_2^2\gamma\tilde \varepsilon_{k}\tilde \eta_{k})\|\tilde x^{(k)}-\tilde x^{*}\|_{M_2}\\
\leq&(\|T(\alpha,\omega)\|+\theta_2\tilde \varepsilon_k(\|(M_2M_1^{-1}\|+\|(AM_1^{-1}\|)+\theta_2\tilde \eta_{k}\gamma+\theta_2^2\gamma\tilde \varepsilon_{k}\tilde \eta_{k})\|\tilde x^{(k)}-\tilde x^{*}\|_{M_2}\\
\leq&(\rho(T(\alpha,\omega))+\theta_2\tilde
\varepsilon_{k}(\gamma+\theta_1+\theta_2\gamma\tilde\eta_{k})+\theta_2\gamma\tilde\eta_{k})\|\tilde
x^{(k)}-\tilde x^{*}\|_{M_2}.
\end{aligned}
\end{equation*}
From the splitting formulation in \Cref{le:pra GADI-HS}, it is easy to verify that
$\|N_1M_2^{-1}\|\leq1$. Therefore, the fourth inequality in the above expression is hold.
Obviously, if the condition \cref{eq:condition} is met, then $\{\tilde x^{(k)}\}$ is convergent to $\tilde x^{*}$.
\end{proof}

\begin{remark}
	\Cref{th:pra GADI-HS conver} demonstrates that if the subsystem can be solved exactly, i.e.,
	$\left\lbrace \tilde\varepsilon_{k}\right\rbrace $ and $\left\lbrace
	\tilde\eta_{k}\right\rbrace $ being equal to zero, the practical GADI-HS and the GADI-HS have the same convergent speed.
	Further, the convergent speed
	of the GADI-HS (the practical GADI-HS) method is faster than that of the HSS (the
	IHSS) method when $\rho(T(\alpha,\omega))<\rho (M(\alpha))$ according
	to \Cref{th:|mu| and |lamba|}.
\end{remark}

To guarantee the convergence of the practical GADI-HS method in \Cref{th:pra
GADI-HS conver}, it is sufficient to take
$\tilde\varepsilon_k$ and $\tilde\eta_{k}$ such that the conditions of
\Cref{le:pra GADI-HS} are satisfied. Similar to \cite{bai2003hermitian}, we can offer
a feasible way to choose $\tilde \varepsilon _{k}$ and $\tilde\eta_{k}$ below.

\begin{theorem}\label{th:IADIS convegent speed}
Let the assumptions of \Cref{le:pra GADI-HS} be met. Assume that sequences
$\left\lbrace \tau _{1}(k) \right\rbrace $ and $\left\lbrace \tau_{2}(k)
\right\rbrace$ are both nondecreasing and positive satisfying $$\tau_{1}(k)\leq
1,~~\tau_{2}(k)\leq 1,\quad\lim \limits _{k \to \infty} \sup \tau_{1}(k)=\lim \limits
_{k\to \infty}\sup  \tau_{2}(k)=+\infty.$$ Suppose that $\mu _{1}$ and $\mu _{2} $
are real constants in the interval $(0,1)$ satisfying
\begin{equation*}
\tilde \varepsilon_{k}\leq c_{1}\mu_{1}^{\tau_{1}(k)},~~\tilde\eta_{k}\leq c_2
\mu_{2}^{\tau_{2}(k)},~~~k=0,1,2,
\end{equation*}
where $c_{1}$ and $c_{2}$ are nonnegative constants. Then
\begin{equation*}
\|\tilde x^{(k+1)}-\tilde x^{*}\|_{M_2}\leq \sqrt{\rho(T(\alpha,\omega))+\psi\theta_1\mu ^{\tau
	(k)}})^{2}\|\tilde x^{(k)}-\tilde x^{*}\|_{M_2}~,~~k=0,1,2,
\end{equation*}
where $ \tau(k)=\min \left\lbrace {\tau_{1}(k),\tau_{2}(k)} \right\rbrace$, $\mu
=\max \left\lbrace {\mu _{1},\mu _{2}} \right\rbrace $, $\psi=\max \bigg\{\sqrt
{\gamma c_1c_2}\, ,\dfrac{c_1\gamma+\theta_2c_1+c_2\gamma}{2\sqrt
{\rho(T(\alpha,\omega))}} \bigg\} $ with $\gamma$ and $\theta_2$ being defined in
\Cref{th:pra GADI-HS conver}. In particular, we have
\begin{equation*}
\lim \limits _{k \to \infty} \sup \frac{\|\tilde x^{(k+1)}-\tilde x^{*}\|_{M_2}}{\|\tilde x^{(k)}-\tilde x^{*}\|_{M_2}}=\rho(T(\alpha,\omega)).
\end{equation*}
\end{theorem}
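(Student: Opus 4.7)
My plan is to reduce to the contraction inequality already proved in \Cref{th:pra GADI-HS conver}, namely
\begin{equation*}
\|\tilde x^{(k+1)}-\tilde x^{*}\|_{M_2}\leq \bigl(\rho(T(\alpha,\omega))+\theta_{1}\tilde\varepsilon_{k}(\gamma+\theta_{2}+\theta_{1}\gamma\tilde\eta_{k})+\theta_{1}\gamma\tilde\eta_{k}\bigr)\|\tilde x^{(k)}-\tilde x^{*}\|_{M_2},
\end{equation*}
and then show that with the geometric control $\tilde\varepsilon_{k}\le c_{1}\mu_{1}^{\tau_{1}(k)}$, $\tilde\eta_{k}\le c_{2}\mu_{2}^{\tau_{2}(k)}$ the bracketed contraction factor fits into the quadratic envelope $\bigl(\sqrt{\rho(T(\alpha,\omega))}+\psi\theta_{1}\mu^{\tau(k)}\bigr)^{2}$.

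First I would normalize the two geometric sequences to a common base. Since $\mu:=\max\{\mu_{1},\mu_{2}\}\in(0,1)$ and $\tau(k):=\min\{\tau_{1}(k),\tau_{2}(k)\}\le \tau_{i}(k)$, the inequality $\mu_{i}^{\tau_{i}(k)}\le \mu^{\tau(k)}$ holds for $i=1,2$; hence $\tilde\varepsilon_{k}\le c_{1}\mu^{\tau(k)}$ and $\tilde\eta_{k}\le c_{2}\mu^{\tau(k)}$. Substituting these into the bound from \Cref{th:pra GADI-HS conver} produces
\begin{equation*}
\rho(T(\alpha,\omega))+\theta_{1}(c_{1}\gamma+c_{1}\theta_{2}+c_{2}\gamma)\mu^{\tau(k)}+\theta_{1}^{2}\gamma c_{1}c_{2}\mu^{2\tau(k)},
\end{equation*}
a polynomial of degree $2$ in $\mu^{\tau(k)}$ whose constant term is exactly $\rho(T(\alpha,\omega))$.

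The key algebraic step is to recognize that $\bigl(\sqrt{\rho(T(\alpha,\omega))}+\psi\theta_{1}\mu^{\tau(k)}\bigr)^{2}=\rho(T(\alpha,\omega))+2\sqrt{\rho(T(\alpha,\omega))}\,\psi\theta_{1}\mu^{\tau(k)}+\psi^{2}\theta_{1}^{2}\mu^{2\tau(k)}$. A term-by-term comparison shows that the quadratic envelope dominates our bound provided the linear coefficient satisfies $\psi\ge (c_{1}\gamma+c_{1}\theta_{2}+c_{2}\gamma)/(2\sqrt{\rho(T(\alpha,\omega))})$ and the quadratic coefficient satisfies $\psi^{2}\ge \gamma c_{1}c_{2}$. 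Taking $\psi$ as the maximum of the two required lower bounds, which is exactly the $\psi$ in the statement, finishes the contraction inequality; this matching of coefficients is the only non-routine step, and I expect the presentation (rather than the ideas) to be the main source of bookkeeping difficulty, especially because the paper mixes $\theta_{1}$ and $\theta_{2}$ in places and these have to be carried consistently.

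For the final asymptotic statement, I would observe that the hypothesis $\limsup_{k\to\infty}\tau_{i}(k)=+\infty$ together with $\mu\in(0,1)$ gives $\mu^{\tau(k)}\to 0$ along a subsequence, so the contraction factor in our bound converges to $\rho(T(\alpha,\omega))$. The upper estimate $\limsup_{k\to\infty}\|\tilde x^{(k+1)}-\tilde x^{*}\|_{M_2}/\|\tilde x^{(k)}-\tilde x^{*}\|_{M_2}\le \rho(T(\alpha,\omega))$ is then immediate, and the matching lower bound follows from the standard fact that the asymptotic convergence factor of a linear stationary iteration is at least the spectral radius of its iteration matrix (applied to the outer two-step splitting that defines $T(\alpha,\omega)$).
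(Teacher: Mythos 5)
Your proposal matches the paper's own proof essentially step for step: both substitute the geometric bounds on $\tilde\varepsilon_{k}$ and $\tilde\eta_{k}$ into the contraction estimate of \Cref{th:pra GADI-HS conver}, normalize to the common base $\mu^{\tau(k)}$, and choose $\psi$ precisely so that the resulting quadratic in $\mu^{\tau(k)}$ is dominated term by term by $\bigl(\sqrt{\rho(T(\alpha,\omega))}+\psi\theta_{1}\mu^{\tau(k)}\bigr)^{2}$. Your closing remarks on the limsup are in fact slightly more careful than the paper, which simply asserts that the asymptotic statement ``immediately holds,'' whereas you correctly note that the displayed bound only yields the upper estimate and that the matching lower bound needs the standard spectral-radius argument for the underlying stationary iteration.
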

\begin{proof}
From \Cref{th:pra GADI-HS conver}, we obtain
\begin{equation*}
\begin{aligned}
&\|\tilde x^{(k+1)}-\tilde x^{*}\|_{M_2}
\leq(\rho(T(\alpha,\omega))+\theta_1\tilde \varepsilon_{k}(\gamma+\theta_2
+\theta_1\gamma\tilde \eta_{k})+\theta_1\gamma\tilde \eta_{k})\|\tilde x^{(k)}-\tilde x^{*}\|_{M_2}\\
\leq &(\rho(T(\alpha,\omega))+\theta_1c_1\mu_1^{\tau_1(k)}(\gamma+\theta_2
+\theta_1\gamma c_2\mu_2^{\tau_2(k)})+\theta_1\gamma
c_2\mu_2^{\tau_2(k)})\|\tilde x^{(k)}-\tilde x^{*}\|_{M_2}\\
\leq &(\rho(T(\alpha,\omega))+\theta_1c_1\mu^{\tau(k)}(\gamma+\theta_2
+\theta_1\gamma c_2\mu^{\tau(k)})+\theta_1\gamma
c_2\mu^{\tau(k)})\|\tilde x^{(k)}-\tilde x^{*}\|_{M_2}\\
=&(\rho(T(\alpha,\omega))+(c_1\gamma+\theta_2c_1+\gamma
c_2)\theta_1\mu^{\tau(k)}+\theta_1^{2}\gamma
c_1c_2\mu^{2\tau(k)}))\|\tilde x^{(k)}-\tilde x^{*}\|_{M_2}\\
\leq&(\rho(T(\alpha,\omega))+2\psi\sqrt{\rho(T(\alpha,\omega))}\theta_1\mu^{\tau(k)}+\psi^2\theta_1^{2}\mu^{2\tau(k)})\|\tilde x^{(k)}-\tilde x^{*}\|_{M_2}\\
=&(\sqrt{\rho(T(\alpha,\omega))}+\psi\theta_1\mu ^{\tau
	(k)})^{2}\|\tilde x^{(k)}-\tilde x^{*}\|_{M_2}.
\label{eq:x^k+1-x^* norm}	
\end{aligned}
\end{equation*}
It immediately holds that
\begin{equation*}
\lim \limits _{k \to \infty} \sup \frac{\|\tilde x^{(k+1)}-\tilde x^{*}\|_{M_2}}{\|\tilde x^{(k)}-\tilde x^{*}\|_{M_2}}=\rho(T(\alpha,\omega)).
\end{equation*}
\end{proof}

\subsection{GADI-AB scheme for linear matrix equations}\label{sec:Sylvester}

In this section, we apply the GADI framework to solve the matrix equation.
We use a representative example, i.e., the continuous Sylvester equation, which has been widely used in control
theory and numerical PDEs (see \cite{ke2014a,wang2013positive,ZHENG2014145} and the references therein), to demonstrate the implementation.
Concretely, the continuous Sylvester equation can be written as
\begin{equation}\label{eq:cSe}
AX+XB=C,
\end{equation}
where $A\in \mathbb{C}^{m\times m}, B\in \mathbb{C}^{n\times n}$, and $C\in
\mathbb{C}^{m\times n}$ are sparse matrices, and $X\in\mathbb{C}^{m\times n}$ is the
unknown matrix. Assume that $A$ and $B$ are positive semidefinite, at least one of
them is positive definite, and at least one of them is non-Hermitian.
Apparently, under these assumptions, the
continuous Sylvester equation \cref{eq:cSe} has a unique solution.

Applying the GADI framework to \cref{eq:cSe} and replacing splitting matrices $M$ and
$N$ in \cref{eq:GADI for Ax=b} with matrices $A$ and $B$ in \cref{eq:cSe}, we
obtain the GADI-AB method. Given an initial guess $X^{(0)}$ and $\hat\alpha>0$, $\hat\omega\geq 0$,
the GADI-AB framework is
\begin{equation}\label{eq:GADI-AB for cSe}
\left\{\begin{aligned}
&(\hat\alpha I+A)X^{(k+\frac{1}{2})}=X^{(k)}(\hat\alpha I-B)+C,\\
&X^{(k+1)}(\hat\alpha I+B)=X^{(k)}(B-(1-\hat\omega)\hat\alpha I)+(2-\hat\omega)\hat\alpha X^{(k+\frac{1}{2})},
\end{aligned}\right.
\end{equation}
where $k=0,1,\ldots$.
The following theorem gives the convergence result of the GADI-AB method.

\begin{theorem}\label{th:GADI-AB convergent analysis}
Let $A \in \mathbb{C}^{n\times n}$ be positive definite matrix and $B \in
\mathbb{C}^{n\times n}$ be semipositive definite matrix. Then the GADI-AB
method \cref{eq:GADI-AB for cSe} is convergent to the unique solution
$X^{*}$ of the continuous Sylvester equation \cref{eq:cSe} for any $\hat \alpha>0$ and $\hat \omega\in[0,2)$.
\end{theorem}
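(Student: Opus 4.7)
My plan is to reduce Theorem~\ref{th:GADI-AB convergent analysis} to the already-proved Theorem~\ref{th:GADI Conver analysis} by vectorizing the Sylvester equation, so that the GADI-AB iteration for matrices becomes an ordinary GADI iteration for a vector linear system, whose convergence is covered by the second branch of Lemma~\ref{lem:T(alpha)2} (positive definite $M$, positive semi-definite $N$).

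First, I would apply the $\mathrm{vec}$ operator together with the identity $\mathrm{vec}(PXQ)=(Q^{T}\otimes P)\,\mathrm{vec}(X)$ to \cref{eq:cSe}. Writing $x=\mathrm{vec}(X)$ and $c=\mathrm{vec}(C)$, this turns \cref{eq:cSe} into $\mathcal{A}x=c$ with
\begin{equation*}
\mathcal{A}=\mathcal{M}+\mathcal{N},\qquad \mathcal{M}=I_{n}\otimes A,\qquad \mathcal{N}=B^{T}\otimes I_{m}.
\end{equation*}
Vectorizing both half-steps of \cref{eq:GADI-AB for cSe} gives exactly the GADI scheme \cref{eq:GADI for Ax=b} for $\mathcal{A}x=c$ with splitting matrices $\mathcal{M},\mathcal{N}$ and parameters $\hat\alpha,\hat\omega$; the only check here is that Kronecker products commute with the identity pieces $\hat\alpha I$, which is immediate.

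Next I would verify the spectral hypotheses of Lemma~\ref{lem:T(alpha)2} for $(\mathcal{M},\mathcal{N})$. Since the eigenvalues of $I_{n}\otimes A$ are those of $A$ repeated $n$ times, positive-definiteness of $A$ yields positive-definiteness of $\mathcal{M}$. Analogously, $\mathcal{N}=B^{T}\otimes I_{m}$ has the eigenvalues of $B^{T}$ (equivalently $B$) repeated $m$ times, so positive semi-definiteness of $B$ passes to $\mathcal{N}$. With these two facts in hand, Theorem~\ref{th:GADI Conver analysis} applies directly to the vectorized iteration for every $\hat\alpha>0$ and $\hat\omega\in[0,2)$, giving $\rho(T'(\hat\alpha,\hat\omega))<1$ where now
\begin{equation*}
T'(\hat\alpha,\hat\omega)=(\hat\alpha I+\mathcal{N})^{-1}(\hat\alpha I+\mathcal{M})^{-1}\bigl(\hat\alpha^{2}I+\mathcal{M}\mathcal{N}-(1-\hat\omega)\hat\alpha\,\mathcal{A}\bigr).
\end{equation*}
Convergence of $x^{(k)}$ to the unique fixed point $x^{*}=\mathrm{vec}(X^{*})$ of $\mathcal{A}x=c$ then unvectorizes to convergence of $X^{(k)}$ to the unique solution $X^{*}$ of \cref{eq:cSe}, completing the proof.

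The main obstacle I anticipate is purely bookkeeping: matching the two half-steps in \cref{eq:GADI-AB for cSe}, which act on $X$ on the \emph{left} through $A$ and on the \emph{right} through $B$, to the symmetric left-multiplication form of \cref{eq:GADI for Ax=b} after vectorization, and making sure $B^{T}$ (rather than $B$) is the object whose positivity must be used. Once the Kronecker-product accounting is set up correctly, the convergence conclusion is a corollary of Theorem~\ref{th:GADI Conver analysis} and requires no new estimates.
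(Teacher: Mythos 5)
Your proposal is correct and follows essentially the same route as the paper: the paper's own proof is a one-line appeal to \Cref{th:GADI Conver analysis} ``according to the properties of $A$ and $B$,'' which implicitly relies on exactly the Kronecker-product vectorization you carry out, with $\mathcal{M}=I_n\otimes A$ positive definite and $\mathcal{N}=B^{T}\otimes I_m$ positive semi-definite so that the second branch of \Cref{lem:T(alpha)2} applies. Your version simply makes explicit the bookkeeping (including the $B^{T}$ versus $B$ point) that the paper leaves to the reader.
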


\begin{proof}
It is evident that the GADI-AB method satisfies the conditions of \Cref{th:GADI
Conver analysis} according to the properties of $A$ and $B$. Therefore, the GADI-AB method
converges to the unique solution $x^{*}$ of \cref{eq:cSe}.
\end{proof}

\section{Parameter selection}
\label{sec:paraSelection}

The effectiveness of the ADI schemes is sensitive to the splitting parameters.
How to efficiently and accurately obtain relatively optimal parameters in splitting methods is
still a challenge.
A common approach is traversing parameters within an interval by amounts of numerical
experiments. The traversing method can obtain relatively accurate optimal parameters;
however, it consumes much repetitive computational amount\,\cite{bai2011hermitian, ke2014a, ZHENG2014145}.
Another approach is using theoretical analysis to estimate the splitting
parameters\,
\cite{bai2011hermitian,bai2003hermitian, penzl2000, wang2013positive}.
However, the theoretical method is available on a case-by-case basis, and
the effectiveness relies heavily on the theoretical estimate.
In this section, we provide a data-driven parameter selection method, the
GPR approach  based on the Bayesian inference, which can efficiently obtain accurate
splitting parameters. It is emphasized that the proposed GPR method can be
available to the GADI framework and other splitting schemes.
As a comparison, we present a theoretical method of selecting splitting parameters
for the GADI-HS method as well. It also should be pointed out that the theoretical method is a case-by-case basis. For the practical GADI-HS and GADI-AB methods, there has been no theory to estimate splitting parameters.

\subsection{Gaussian process regression}\label{sec:GPR}

In this section, the GPR method is proposed to estimate the optimal
parameters of the GADI framework.
GPR is a new regression method, which has developed rapidly during the last two decades. Therefore, the GPR method has wide applications and has become a heated issue in machine
learning \cite{fried2001, Rasmussen2010Gaussian}.
It has lots of advantages such as being easy to implement, flexible to nonparameter
infer, and adaptive to obtaining hyperparameters. The relatively optimal ADI parameters might be dependent on other quantities, such as the eigenvalues of the coefficient matrices. In practical implementation, we only require a user-friendly mapping to obtain the relatively optimal ADI parameters. The scale of linear systems is one of the most readily available quantities. Therefore, we choose to learn the mapping between the input dimension $n$ and the output parameter $\alpha$.

\subsubsection{GPR prediction}


\begin{definition}
	The Gaussian process (GP) is a collection of random variables which follows the joint Gaussian distribution.
\end{definition}

Assume that we have a training set
$D=\{(n_i, \alpha_i) \,|\, i=1,2,\ldots,d\}:=\{\bm n, \bm \alpha\}$,
where $(n_i, \alpha_i)$ is an input-output pair,
$n_i$ is the dimension of the iterative matrix, and $\alpha_i$ is the splitting
parameter in the GADI framework.
If $\alpha_i$ with respect to $n_i$ obeys the GP, then $f(n)=[f(n_1), f(n_2), \ldots, f(n_d)]$ obeys the
$d$-dimensional Gaussian distribution (GD)
\begin{equation*}\label{eq:joint distribution}
	\begin{bmatrix}
		f(n_1) \\ \vdots \\ f(n_d)
	\end{bmatrix}
	\thicksim N
	\begin{pmatrix}
		\begin{bmatrix}
			\mu(n_1) \\ \vdots \\ \mu(n_d)
		\end{bmatrix},
		\begin{bmatrix}
			k(n_1, n_1) & \cdots & k(n_1, n_d)\\
			\vdots      & \ddots & \vdots     \\
			k(n_d, n_1) & \cdots & k(n_d, n_d)\\
		\end{bmatrix}
	\end{pmatrix}.
\end{equation*}

Evidently, GP is determined by its mean function $\mu(n)$ and
covariance function $k(n,n)$.
This is a natural generalization of the GD whose mean and covariance are a vector and a matrix, respectively.
We can rewrite the above GP as
\begin{equation*}\label{eq:gaussian process}
	f(n)\thicksim GP(\mu(n), K(n,n')),
\end{equation*}
where $n, n'$ are any two random variables in the input set $\bm n$. We usually
set the mean function to be zero.

The task of the GPR method is to learn a mapping relationship between the input set $\bm n$ and
output set $\bm \alpha$, i.e., $f(n): \mathbb{N} \mapsto \mathbb{R}$, and
infer the most possible output value $\alpha_*=f(n_*)$ given the new test point $n_*$.
In an actual linear regression problem, we consider the model as
\begin{equation*}\label{eq:linear regression model}
	\alpha=f(n)+\eta,
\end{equation*}
where $\alpha$ is the observed value polluted by additivity noise $\eta$ to prevent
the singularity of generated matrix.
Further, assume that $\eta$ follows a GD with zero mean and variance $\sigma^2$,
i.e., $\eta \thicksim N(0, \sigma^2)$. The desirable range of $\sigma$ is
$[10^{-6}, 10^{-2}]$. In this work we take $\sigma=10^{-4}$ in numerical calculations.

Therefore, the prior distribution of observed value $\bm \alpha$ becomes
\begin{equation*}
	\bm \alpha \thicksim N(\bm \mu_\alpha(\bm n), K(\bm n, \bm n)+\sigma^2I_d).
\end{equation*}
The joint prior distribution of observed value $\bm \alpha$ and prediction
$\bm \alpha_*$ becomes
\begin{equation*}
	\begin{bmatrix}
		\bm \alpha \\ \bm \alpha _*
	\end{bmatrix}
	\thicksim N
	\begin{pmatrix}
		\begin{bmatrix}
			\bm \mu_{\alpha} \\ \bm \mu_{\alpha_*}
		\end{bmatrix},
		\begin{bmatrix}
			K(\bm n, \bm n)+\sigma^2I_d & K(\bm n, \bm n_*)\\
			K(\bm n_*, \bm n) & K(\bm n_*, \bm n_*)
		\end{bmatrix}
	\end{pmatrix},
\end{equation*}
where $I_d$ is a $d$-order identity matrix, and $K(\bm n, \bm n)=(k_{ij})$ is a
symmetric positive definite covariance matrix with $k_{ij}=k(n_i,n_j)$.
$K(\bm n, \bm n_*)$ is a symmetric covariance matrix between
the training set $\bm n$ and test set $\bm n_*$.

Using the Bayesian formula
\begin{equation}\label{eq:bayesian formula}
p(\bm \alpha _*|\bm \alpha)=\frac{p(\bm \alpha|\bm \alpha _*)p(\bm \alpha _*)}{p(\bm \alpha)},
\end{equation}
the joint posterior distribution of prediction is
\begin{equation}\label{eq:GPR}
	\bm \alpha _*|\bm n, \bm \alpha, \bm n_* \thicksim N\left(\bm \mu _*, \bm \sigma^2_*\right),
\end{equation}
where
\begin{equation*}
\begin{aligned}
	\bm \mu_*&=K(\bm n_*, \bm n)\left[K(\bm n, \bm n)+\sigma^2I_d\right]^{-1}(\bm \alpha -\bm \mu_{\alpha})+\bm \mu_{\alpha_*},\\
	\bm \sigma^2_*&=K(\bm n_*, \bm n_*)-K(\bm n_*, \bm n)\left[K(\bm n, \bm n)+\sigma^2I_d\right]^{-1}K(\bm n, \bm n_*).
\end{aligned}
\end{equation*}
The derivation procedure can refer to \cite{Rasmussen2010Gaussian}. For the output $\alpha_*$ in the test
set, one can use the mean value of the above GP as its estimated value,
i.e., $\widehat{\alpha}_*=\mu_*$.

The kernel function is the key of the GPR method, which
generates the covariance matrix to measure the distance between any two input
variables. When the distance is closer, the correlation
of the corresponding output variables is greater. Therefore, we need to choose or construct
the kernel function according to actual requirements. The most commonly used kernel functions include the radial basis
function, the rational quadratic kernel function, the exponential kernel
function, and the periodic kernel function. For more kernel functions refer to \cite{Rasmussen2010Gaussian}.
In this work, we choose the exponential kernel function
\begin{equation}\label{eq:EK}
k(x,y)=\sigma_f^2 \exp\Big(\frac{-\|x-y\|}{2\iota^2}\Big),
\end{equation}
where $\theta=\{\iota,\sigma_f\}$ is the hyperparameter, $ \|x-y\|=\sqrt{\sum\limits_{i}(x_i-y_i)^2}$. The optimal hyperparameter is determined by maximizing the marginal-log likelihood function $p(y|x,\theta)$, i.e.,
\begin{equation}\label{eq:max_log}
	L=\log p(y|x,\theta)=-\frac{1}{2}y^T[K+\sigma^2I_d]^{-1}y-\frac{1}{2}\log ||K+\sigma^2I_d||-\frac{n}{2}\log 2\pi.
\end{equation}	
Maximizing $L$ corresponds to an unconstrained nonlinear optimization problem.
We use the L-BFGS method to address it, which costs little time.

\subsubsection{The implementation of GPR}

Since the GPR results from probability distribution, the regression and prediction
have a specific confidence interval.  The confidence interval is defined as follows,

\begin{definition}[confidence interval]
Suppose a data set $x_1,\ldots, x_n$ is given, modeled as the realization of random variables $X_1,\ldots,X_n$. Let $\alpha$ be the parameter of interest and $\gamma$ be a number between $0$ and $1$. If there exist sample statistics $L_n = g(X_1,\ldots,X_n)$ and $U_n = h(X_1,\ldots,X_n)$ such that
$$ P(L_n <\theta <U_n) = \gamma $$
for every value of $\alpha$, then $(l_n, u_n)$,
where $l_n = g(x_1,\ldots,x_n)$ and $u_n = h(x_1,\ldots,x_n)$, is called a $100\gamma$
confidence interval for $\alpha$. The number $\gamma$ is called the confidence level.
\end{definition}
The number $l_n$ ($u_n$) is called a $100 \gamma\%$ lower (upper) confidence bound for parameter $\alpha$.
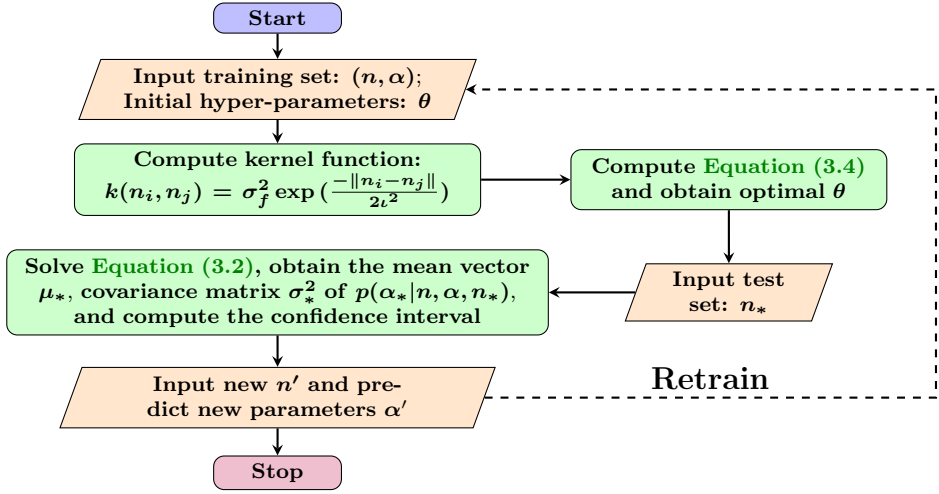
\begin{figure}[!hbpt]
	\centering
	\begin{center}	
		\footnotesize{
		\tikzstyle{A} = [rectangle, rounded  corners,minimum width=1.5cm,minimum height=0.4cm,text width=1.5cm,text centered,draw=black,fill=blue!25]
		\tikzstyle{B} = [trapezium, trapezium left angle=70, trapezium right angle=110, minimum width=4.5cm,minimum height=0.5cm, text width=4.5cm,text centered,draw=black,fill=orange!20]
		\tikzstyle{C} = [rectangle, rounded  corners, minimum width=5cm,minimum height=0.8cm, text width=5.2cm,text centered,draw=black,fill=green!20]
		\tikzstyle{D} = [rectangle, rounded  corners, minimum width=4.0cm,minimum height=0.6cm, text width=4cm,text centered,draw=black,fill=green!20]
		\tikzstyle{E} = [trapezium, trapezium left angle=70, trapezium right angle=110, minimum width=2cm,minimum height=0.6cm, text width=2cm,text centered,draw=black,fill=orange!20]
		\tikzstyle{F} = [rectangle, rounded  corners, minimum width=7cm,minimum height=1cm, text width=7.0cm,text centered,draw=black,fill=green!20]
		\tikzstyle{H} = [trapezium, trapezium left angle=70, trapezium right angle=110, minimum width=5cm,minimum height=0.5cm, text width=5cm,text centered,draw=black,fill=orange!20]
		\tikzstyle{I} = [rectangle, rounded  corners,minimum width=1.5cm,minimum height=0.4cm,text width=1.5cm,text centered,draw=black,fill=purple!25]
		\tikzstyle{arrow} = [thick=50cm,->,>=stealth]		
		\begin{tikzpicture}[node distance=1.7cm]
		minimum	\node (start0) [A] {\textbf{Start}};
		\node (str1) [B,below of=start0,yshift = 0.75cm] {\textbf{Input training set:} \bm{ $(n, \alpha)$}; \textbf{Initial hyper-parameters:} \bm{$\theta$}};
		\node (str2) [C,below of=str1,yshift = 0.5cm, xshift=0cm] {\textbf{Compute kernel function: } \bm{$k(n_i,n_j)=\sigma_f^2 \exp(\frac{-\|n_i-n_j\|}{2\iota^2})$}};
		\node (str3) [D,below of=str1,yshift = 0.5cm, xshift=6cm] {\textbf{Compute  \Cref{eq:max_log} and obtain optimal} \bm{$\theta$}};
		\node (str4) [E,below of=str3,yshift = 0.2cm, xshift=0cm] {\textbf{Input test set:} \bm{$n_*$}};
		\node (str5) [F,below of=str3,yshift=0.2cm,xshift=-6cm] {\textbf{Solve \Cref{eq:GPR}, obtain the mean vector} \bm{$\mu_*$}, \textbf{covariance matrix} \bm{$\sigma^2_*$} \textbf{of} \bm{$p(\alpha_*|n,\alpha ,n_*)$}, \textbf{and compute the confidence interval}};
		\node (str7) [H,below of=str5,yshift = 0.3cm, xshift=-0cm] {\textbf{Input new} \bm{$n'$} \textbf{and predict new parameters} $\bm \alpha'$};
		\node (stop8) [I,below of=str7,yshift = 0.7cm, xshift=0cm] {\textbf{Stop}};
		\draw [arrow,draw=black, thick] (start0) -- (str1);
		\draw [arrow,draw=black, thick] (str1) -- node [right] {}(str2);

		\draw [arrow,draw=black, thick] (str2) -- node[above]{}(str3);
		
		\draw [arrow,draw=black, thick] (str3) -- node[right]{}(str4);
		\draw [arrow,draw=black, thick] (str4) -- node[]{}(str5);
		\draw [arrow,draw=black, thick] (str5) -- node [right] {}(str7);
        \draw [arrow,draw=black,dashed] (str7.east)node
         [right=3cm][above]{\large\color{black}\textbf{Retrain}} --++(6cm, 0cm)|-(str1.east);
        \draw [arrow] (str7) -- node[]{}(stop8);
		\end{tikzpicture}
		}
	\end{center}
	\caption{Flow chart of GPR parameters prediction.}
	\label{fig:GPR}
\end{figure}

\Cref{fig:GPR} summarizes the process of GPR. Here, the confidence interval
denotes the area of the normal distribution of mean $\mu$ and standard deviation
$\sigma$ falling in the interval
$(\mu-1.96\sqrt{\mbox{diag}(\sigma)},\mu+1.96\sqrt{\mbox{diag}(\sigma)})$ accounts
for about $95\%$. Ninety-five percent of the sample means selected from a population will be within 1.96 standard deviations of the population mean $\mu$.
From \Cref{fig:GPR}, it can be seen that the GPR method has established a mapping
between the matrix scale $n$ and the parameter $\alpha$. We input known data to train
an inference model and then predict the unknown parameters.
The known relatively optimal parameters in the training data set come from small-scale linear systems, while the
unknown parameter belongs to that of large linear systems.
To predict the parameter more
accurately and extensively, we also put the predicted data into the training set
to form the retraining set. Subsequently, we use the latest model to predict
the parameters of much larger linear systems.

\subsection{Quasi-optimal parameter of GADI-HS}\label{sec:QOP}

In this section, we offer a theoretical way to select the quasi-optimal parameter of
the GADI-HS method. We first estimate an upper bound of the iterative matrix spectral
radius of the GADI-HS method.
\begin{lemma}\label{le:bound of GADI-HS}
	Let $A\in \mathbb{C}^{n\times n}$ be a positive definite matrix, $H$  and
	$S$ defined by \cref{eq:HS} be its Hermitian and skew-Hermitian parts, $\alpha$ be a positive constant, and $\omega \in [0,2)$.  Then the spectral radius $\rho(T(\alpha,\omega))$ is bounded by
	\begin{equation}\label{eq:bound of T}
	\rho(T(\alpha,\omega))\leq  \frac{\alpha^2+\alpha|1-\omega|\|A\|_2+\lambda_{max}\sigma_{max}}{\alpha(\alpha+\lambda_{min})}=\delta(\alpha^*, \omega^*),
	\end{equation}
	where $T(\alpha,\omega)$ is defined in \cref{eq:HSiterMatrix}, $\sigma _{max}$ is
	the maximum singular value of $S$, and $\lambda_{min}$ and $\lambda _{max}$ are the
	minimum and maximum eigenvalues of $H$, respectively.
\end{lemma}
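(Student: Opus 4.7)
The plan is to bound the spectral radius by the operator $2$-norm and then apply submultiplicativity to the three factors in the definition \eqref{eq:HSiterMatrix} of $T(\alpha,\omega)$. Since $\rho(T(\alpha,\omega))\leq \|T(\alpha,\omega)\|_2$, it suffices to show
\begin{equation*}
\|T(\alpha,\omega)\|_2 \leq \|(\alpha I+S)^{-1}\|_2\cdot \|(\alpha I+H)^{-1}\|_2\cdot \|\alpha^2 I+HS-(1-\omega)\alpha A\|_2,
\end{equation*}
and then bound each of the three factors separately.

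First I would handle the two inverses using the spectral properties of $H$ and $S$. Because $S=\tfrac12(A-A^*)$ is skew-Hermitian, its eigenvalues are purely imaginary, so the eigenvalues of $\alpha I+S$ have modulus at least $\alpha$, giving $\|(\alpha I+S)^{-1}\|_2\leq 1/\alpha$; one can make this rigorous via the unitary diagonalization of the normal matrix $\alpha I+S$. Similarly, since $H=\tfrac12(A+A^*)$ is Hermitian positive definite with smallest eigenvalue $\lambda_{\min}$, the matrix $\alpha I+H$ is Hermitian positive definite with smallest eigenvalue $\alpha+\lambda_{\min}$, hence $\|(\alpha I+H)^{-1}\|_2\leq 1/(\alpha+\lambda_{\min})$.

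Next I would estimate the numerator $\|\alpha^2 I+HS-(1-\omega)\alpha A\|_2$ by the triangle inequality:
\begin{equation*}
\|\alpha^2 I+HS-(1-\omega)\alpha A\|_2 \leq \alpha^2 + \|HS\|_2 + |1-\omega|\,\alpha\,\|A\|_2.
\end{equation*}
Submultiplicativity together with the fact that $\|H\|_2=\lambda_{\max}$ (since $H$ is Hermitian positive definite) and $\|S\|_2=\sigma_{\max}$ yields $\|HS\|_2\leq \lambda_{\max}\sigma_{\max}$. Combining these bounds gives exactly the claimed inequality \eqref{eq:bound of T}.

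The argument is essentially mechanical; the only mild subtlety is justifying $\|(\alpha I+S)^{-1}\|_2\leq 1/\alpha$, which requires invoking the normality of $S$ (so that $\alpha I+S$ is unitarily diagonalizable and its singular values coincide with the moduli of its eigenvalues). Everything else is the triangle inequality and submultiplicativity, so I do not expect any real obstruction in turning this sketch into a complete proof.
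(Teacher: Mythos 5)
Your proof is correct and reaches exactly the stated bound by essentially the same route as the paper: bounding $\rho$ by the $2$-norm, using $\|(\alpha I+H)^{-1}\|_2\le 1/(\alpha+\lambda_{\min})$ and $\|(\alpha I+S)^{-1}\|_2\le 1/\alpha$ via Hermitian/skew-Hermitian spectral arguments, and estimating the remaining factor. The only cosmetic difference is that the paper first splits $\alpha^2I+HS-(1-\omega)\alpha A$ into two pieces and bounds each product separately (pairing $H$ with $(\alpha I+H)^{-1}$ and $S$ with $(\alpha I+S)^{-1}$ via similarity invariance), whereas you apply the triangle inequality and $\|HS\|_2\le\lambda_{\max}\sigma_{\max}$ inside a single norm; both yield the identical final expression.
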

\begin{proof}
	The proof is in Appendix A.2.
\end{proof}

\begin{lemma}[\cite{bai2003hermitian}]\label{lem:HSSalpha}
	Let $A\in \mathbb{C}^{n\times n}$ be a positive definite matrix,  $H$  and
	$S$ defined by \cref{eq:HS} be its Hermitian and skew-Hermitian parts, and $\alpha$ be a positive constant. Then the quasi-optimal parameter of the HSS iterative method is
	\begin{equation*}
	\alpha^*=\sqrt{\lambda_{min}\lambda_{max}},\quad and \quad \sigma(\alpha^*)=\frac{\sqrt{\lambda_{max}-\sqrt{\lambda_{min}}}}{\sqrt{\lambda_{max}+\sqrt{\lambda_{min}}}}=\frac{\sqrt{\kappa(H)}-1}{\sqrt{\kappa(H)}+1},
	\end{equation*}
	where  $\lambda _{min}$ and $\lambda _{max}$ are the minimum and maximum eigenvalues of $H$, and $\kappa(H)$ is the spectral condition number of $H$.
\end{lemma}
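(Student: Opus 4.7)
The plan is to bound the spectral radius $\rho(M(\alpha))$ of the HSS iteration matrix from (2.14) by a scalar function of $\alpha$ depending only on the Hermitian part $H$, and then minimize that function over $\alpha>0$ by the standard Chebyshev equioscillation argument. Since this lemma is quoted from \cite{bai2003hermitian}, I would reproduce the classical proof rather than invent a new one.

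First I would apply a similarity transformation to separate the contributions of $H$ and $S$. Multiplying on the left by $(\alpha I+S)$ and on the right by $(\alpha I+S)^{-1}$ gives
\begin{equation*}
(\alpha I+S)\,M(\alpha)\,(\alpha I+S)^{-1}=(\alpha I-H)(\alpha I+H)^{-1}(\alpha I-S)(\alpha I+S)^{-1},
\end{equation*}
so $\rho(M(\alpha))$ equals the spectral radius of the right-hand side. Taking the spectral norm and using submultiplicativity, I get
\begin{equation*}
\rho(M(\alpha))\le\bigl\|(\alpha I-H)(\alpha I+H)^{-1}\bigr\|_{2}\cdot\bigl\|(\alpha I-S)(\alpha I+S)^{-1}\bigr\|_{2}.
\end{equation*}
The key observation is that the Cayley transform of the skew-Hermitian matrix $S$ is unitary: since $(\alpha I-S)^{*}(\alpha I-S)=(\alpha I+S)(\alpha I-S)$ and $(\alpha I+S)$ commutes with $(\alpha I-S)$, the second factor above has spectral norm exactly $1$. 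Hence
\begin{equation*}
\rho(M(\alpha))\le\bigl\|(\alpha I-H)(\alpha I+H)^{-1}\bigr\|_{2}=\max_{\lambda\in\sigma(H)}\Bigl|\tfrac{\alpha-\lambda}{\alpha+\lambda}\Bigr|=:\sigma(\alpha),
\end{equation*}
where the last equality uses that $H$ is Hermitian and $(\alpha I-H)(\alpha I+H)^{-1}$ is a rational function of $H$, hence diagonalizable in the same basis.

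Next I would minimize the upper bound $\sigma(\alpha)$. Because the map $\lambda\mapsto|(\alpha-\lambda)/(\alpha+\lambda)|$ is monotone on each side of $\alpha$, its maximum over $[\lambda_{\min},\lambda_{\max}]$ is attained at an endpoint, so
\begin{equation*}
\sigma(\alpha)=\max\Bigl\{\tfrac{|\alpha-\lambda_{\min}|}{\alpha+\lambda_{\min}},\;\tfrac{|\alpha-\lambda_{\max}|}{\alpha+\lambda_{\max}}\Bigr\}.
\end{equation*}
Both inner expressions are continuous in $\alpha>0$, the first is increasing for $\alpha\ge\lambda_{\min}$, the second is decreasing for $\alpha\le\lambda_{\max}$. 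Hence on $(\lambda_{\min},\lambda_{\max})$ the minimum of their pointwise maximum occurs at the unique equalization point, solving $(\alpha-\lambda_{\min})(\alpha+\lambda_{\max})=(\lambda_{\max}-\alpha)(\alpha+\lambda_{\min})$, which collapses to $\alpha^{2}=\lambda_{\min}\lambda_{\max}$, giving $\alpha^{*}=\sqrt{\lambda_{\min}\lambda_{\max}}$.

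Finally, substituting $\alpha^{*}$ back yields
\begin{equation*}
\sigma(\alpha^{*})=\frac{\sqrt{\lambda_{\min}\lambda_{\max}}-\lambda_{\min}}{\sqrt{\lambda_{\min}\lambda_{\max}}+\lambda_{\min}}=\frac{\sqrt{\lambda_{\max}}-\sqrt{\lambda_{\min}}}{\sqrt{\lambda_{\max}}+\sqrt{\lambda_{\min}}},
\end{equation*}
and dividing numerator and denominator by $\sqrt{\lambda_{\min}}$ and using $\kappa(H)=\lambda_{\max}/\lambda_{\min}$ gives the asserted form $(\sqrt{\kappa(H)}-1)/(\sqrt{\kappa(H)}+1)$. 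The main obstacle, as always with HSS, is the unitarity step for the Cayley transform of $S$: this is what decouples the skew-Hermitian part from the bound and reduces the matrix optimization to a scalar minimax on $[\lambda_{\min},\lambda_{\max}]$; once that is in hand, the rest is routine calculus.
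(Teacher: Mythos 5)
Your proposal is correct and reproduces the standard Bai--Golub--Ng argument; the paper itself gives no proof of this lemma, citing it directly from \cite{bai2003hermitian}, so there is nothing in the paper to diverge from. All the key steps are in order: the similarity transformation by $(\alpha I+S)$, the unitarity of the Cayley transform $(\alpha I-S)(\alpha I+S)^{-1}$ of the skew-Hermitian part, the reduction to the scalar bound $\sigma(\alpha)=\max_{\lambda\in\sigma(H)}|(\alpha-\lambda)/(\alpha+\lambda)|$, and the equioscillation argument giving $\alpha^{*}=\sqrt{\lambda_{\min}\lambda_{\max}}$ --- and you correctly note that what is minimized is the upper bound $\sigma(\alpha)$ rather than $\rho(M(\alpha))$ itself, which is precisely why the parameter is only ``quasi-optimal.'' Incidentally, your final computation yields $\bigl(\sqrt{\lambda_{\max}}-\sqrt{\lambda_{\min}}\bigr)/\bigl(\sqrt{\lambda_{\max}}+\sqrt{\lambda_{\min}}\bigr)$, which silently corrects a misplaced radical in the lemma as printed.
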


\begin{theorem}[the quasi-optimal parameter of GADI-HS]\label{th:GADI-HSalpha}\quad
	Let $A\in \mathbb{C}^{n\times n}$ be a positive definite matrix, $H$ and
	$S$ defined by \cref{eq:HS} be  its Hermitian and skew-Hermitian parts,
	$\alpha>0$, and $\omega\in [0,2)$, and $\lambda_{k}$ be the eigenvalue of the
	iterative matrix $M(\alpha)$. For the quasi-optimal parameter
	($\alpha^*,\omega^*$) of the GADI-HS method, we have the following conclusions.
	
	(i) When $|\lambda_k|^2\leq a$, then
	\begin{equation*}
	\omega^*=0,\quad \alpha^*=\sqrt{\lambda_{min}\lambda_{max}},\quad and \quad
	\delta(\alpha^*,\omega^*)=\sigma(\alpha^*),
	\end{equation*}
	where $\sigma(\alpha^*)$ is defined in \Cref{lem:HSSalpha}, and $\lambda _{min}$ and $\lambda _{max}$ are the minimum and maximum eigenvalues of $H$, respectively.
	
	(ii) When $a<|\lambda_k|^2~and ~ 0<
	\omega<\dfrac{4a^2-4a+4b^2}{(1-a)^2+b^2}$, then
	\begin{equation*}
	\omega^*=1,\quad \alpha^*=
	\frac{p+\sqrt{p^2+\lambda_{min}^2p}}{\lambda_{min}},
	\end{equation*}
	and
	\begin{equation*}
	\delta(\alpha^*,\omega^*)=\frac{2p^2+2\lambda_{min}^2p+2p\sqrt{p^2+\lambda_{min}^2p}}{2p^2+2\lambda_{min}^2p+2p\sqrt{p^2+\lambda_{min}^2p}+\lambda_{min}^2\sqrt{p^2+\lambda_{min}^2p}}<1,
	\end{equation*}
	where $p=\lambda_{max}\sigma_{max}$,  $\sigma _{max}$ is the maximum singular value of $S$.
\end{theorem}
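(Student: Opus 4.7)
The plan is to minimize the upper bound of the iteration spectral radius supplied by Lemma~\ref{le:bound of GADI-HS}, namely
\begin{equation*}
\delta(\alpha,\omega)=\frac{\alpha^{2}+\alpha|1-\omega|\,\|A\|_{2}+\lambda_{\max}\sigma_{\max}}{\alpha(\alpha+\lambda_{\min})},
\end{equation*}
over the admissible region $\alpha>0$, $\omega\in[0,2)$, treating the two regimes of Theorem~\ref{th:|mu| and |lamba|} separately.

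\emph{Case (i).} By Theorem~\ref{th:|mu| and |lamba|}(i), whenever $|\lambda_{k}|^{2}\leq a$ one has $\rho(M(\alpha))\leq \rho(T(\alpha,\omega))$ for every admissible $\omega$, so no positive $\omega$ can improve on HSS. The quasi-optimal choice is therefore $\omega^{*}=0$, at which GADI-HS collapses exactly to HSS with iteration matrix $M(\alpha)$. Lemma~\ref{lem:HSSalpha} then applies directly and supplies $\alpha^{*}=\sqrt{\lambda_{\min}\lambda_{\max}}$ together with the classical HSS bound $\sigma(\alpha^{*})$, giving $\delta(\alpha^{*},\omega^{*})=\sigma(\alpha^{*})$. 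This case requires no new computation.

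\emph{Case (ii).} Here $\omega$ enters $\delta(\alpha,\omega)$ only through $|1-\omega|$ in the numerator, while the denominator is $\omega$-free, so the univariate map $\omega\mapsto \delta(\alpha,\omega)$ is minimized at $\omega^{*}=1$. Substituting $\omega^{*}=1$ reduces the problem to a one-variable optimization
\begin{equation*}
\delta(\alpha,1)=\frac{\alpha^{2}+p}{\alpha^{2}+\alpha\lambda_{\min}},\qquad p=\lambda_{\max}\sigma_{\max}.
\end{equation*}
Differentiating with respect to $\alpha$ and clearing the positive denominator yields the critical equation
\begin{equation*}
\lambda_{\min}\alpha^{2}-2p\alpha-p\lambda_{\min}=0,
\end{equation*}
whose unique positive root is $\alpha^{*}=\bigl(p+\sqrt{p^{2}+\lambda_{\min}^{2}\,p}\bigr)/\lambda_{\min}$. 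A sign-of-derivative argument confirms this is the global minimizer on $(0,\infty)$. Finally I would substitute $\alpha^{*}$ back into $\delta(\alpha,1)$ and, rather than expanding the nested radical directly, use the critical-point identity $\lambda_{\min}(\alpha^{*})^{2}=2p\alpha^{*}+p\lambda_{\min}$ to eliminate $(\alpha^{*})^{2}$ from both numerator and denominator. After clearing the common factor $1/\lambda_{\min}$ and inserting the explicit formula for $\alpha^{*}$, the target closed form for $\delta(\alpha^{*},\omega^{*})$ falls out; the strict inequality $\delta(\alpha^{*},\omega^{*})<1$ is immediate from the fact that the denominator carries an additional positive summand $\lambda_{\min}^{2}\sqrt{p^{2}+\lambda_{\min}^{2}p}$ absent from the numerator.

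The main obstacle is the final algebraic reduction in case (ii): a blind substitution of the nested-radical expression for $\alpha^{*}$ becomes very messy, and the decisive trick is to use the critical-point quadratic itself to eliminate $(\alpha^{*})^{2}$ before introducing the radical. A secondary subtlety is that $\omega^{*}=1$ need not always lie in the narrow interval $\bigl(0,\frac{4a^{2}-4a+4b^{2}}{(1-a)^{2}+b^{2}}\bigr)$; the statement is therefore best understood as follows: among all $\omega\in[0,2)$ for which the convergence Theorem~\ref{th:GADI-HS Conver analysis} applies, $\omega^{*}=1$ is the minimizer of the bound $\delta$, while the narrower case (ii) interval is the sufficient condition from Theorem~\ref{th:|mu| and |lamba|}(ii) ensuring strict improvement over HSS.
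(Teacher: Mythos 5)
Your proposal is correct and follows essentially the same route as the paper's Appendix A.3 proof: case (i) reduces to HSS via \Cref{th:|mu| and |lamba|}(i) and \Cref{lem:HSSalpha}, and case (ii) minimizes the bound $\delta(\alpha,\omega)$ from \Cref{le:bound of GADI-HS} first in $\omega$ (giving $\omega^*=1$) and then in $\alpha$ via the same first-order condition $\lambda_{\min}\alpha^2-2p\alpha-p\lambda_{\min}=0$. Your closing observations (using the critical-point quadratic to simplify the back-substitution, and the caveat that $\omega^*=1$ may fall outside the case (ii) interval for $\omega$) are reasonable refinements of details the paper passes over silently, but they do not change the argument.
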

\begin{proof}
The proof is in Appendix A.3.
\end{proof}
\begin{remark}
	From \Cref{le:bound of GADI-HS}, it implies that the bound of
	$\rho(T(\alpha,\omega))$ is related to the singular value of $S$ and the
	eigenvalue of $H$. However, directly estimating $\alpha$ for the GADI-HS
	method is difficult in actual implementation. Here we minimize the upper bound to
	obtain a quasi-optimal $\alpha$ but not the optimal $\alpha$.
\end{remark}

\section{Numerical experiments}	\label{sec:rslts}

In this section, we present extensive numerical examples to show the power of the GADI
framework and the GPR method. Concretely, we take a 3D convection-diffusion equation, 2D parabolic equation (see Appendix A.4), and continuous Sylvester equation as examples to demonstrate the efficiency of
GADI-HS, practical GADI-HS, and GADI-AB.
All computations are carried out using MATLAB 2018a on a Mac laptop with a 2.3 GHz
CPU Intel Core i5 and 8G memory.
We employ the GPR method to predict the optimal splitting
parameters for the GADI framework.
The whole procedure of GPR prediction
only takes about $1\sim 2$ seconds in the offline training.
From numerical calculations, we find that $\omega$ is insensitive to the order of linear
systems. Thus we can obtain the relatively optimal $\omega$ in small-scale linear systems,
then apply the GPR method to predict the relatively optimal $\alpha$.

\subsection{Three-dimensional convection-diffusion equation}

Consider the following 3D convection-diffusion equation:
\begin{equation}
-(u_{x_1 x_1}+u_{x_2 x_2}+u_{x_3 x_3})+(u_{x_1}+u_{x_2}+u_{x_3})=f(x_1,x_2,x_3)
\label{eq:3DConvDiff}
\end{equation}
on the unit cube $\Omega = [0, 1] \times [0, 1] \times [0, 1]$
with Dirichlet-type boundary condition \cite{bai2003hermitian}.
We use the centered difference method to discretize the convective-diffusion equation
\cref{eq:3DConvDiff}, and obtain the linear system $Ax=b$.  The coefficient matrix is
\begin{equation}
A=T_{1}\otimes I\otimes I+I\otimes T_{2}\otimes I+I\otimes I\otimes T_{3},
\label{eq:NM2}
\end{equation}
where $T_{1}$, $T_{2}$ and $T_{3}$ are tridiagonal matrices.
$T_{1}=\mbox{Tridiag}(t_{2},t_{1},t_{3})$, $T_{2}=T_{3}
=\mbox{Tridiag}(t_{2},0,t_{3})$, $t_{1}=6$, $t_{2}=-1-\beta$, $t_{3}=-1+\beta$,
$\beta=1/(2n+2)$. $n$ is the degree of freedom along each dimension,
	$x\in\mathbb{R}^{n^3}$  is the unknown vector of discretizing $u(x_1,x_2,x_3)$.
	$b\in\mathbb{R}^{n^3}$  is the discretization vector of $f(x_1,x_2,x_3)$ which
is determined by choosing the exact solution $x_e = (1,1,\ldots,1)^{T}$.
All tests are started with the zero vector.
All iterative methods are terminated if the relative residual error satisfies
\begin{align}
	\mbox{RES}={\|r^{(k)}\|_{2}}/{\|r^{(0)}\|_{2}} \leq 10^{-6},
	\label{eq:resPDE}
\end{align}
where $r^{(k)}=b-A x^{(k)}$ is the $k$-step residual.

\subsubsection{A comparison of GADI-HS and HSS}

First, we examine the efficiency of the GADI framework in solving the above
linear algebra equation. Concretely, we apply the HSS and GADI-HS methods to
solve $Ax = b$.
The parameters in the HSS and GADI-HS methods are obtained
from \Cref{lem:HSSalpha} and \Cref{th:GADI-HSalpha}, respectively.
\Cref{tab:alpha_q} shows the numerical results, where ``IT" and ``CPU"
denote the required iterations and the CPU time (in seconds), respectively.
\vspace{0cm}

\begin{table}[!htbp]
	\footnotesize{
	\caption{
	Results of the HSS and the GADI-HS methods for solving 3D convection-diffusion
	equation with theoretical quasi-optimal splitting parameters.\label{tab:alpha_q}}
	}
	\centering
	\footnotesize{
		\begin{tabular}{|c|c|c|c|c|c|c|}
			\hline
			&\multicolumn{3}{c|}{HSS} &
			\multicolumn{3}{c|}{GADI-HS} \\
			\hline
			{$n^3$} &{$\alpha_q$}&	{IT}  & {CPU(s)} &{$(\alpha_q,\omega_q)$}  &
			{IT}  & {CPU(s)} \\
			\hline
			{$8^3(512)$}&2.0521&37&0.03&(0.6208,1.0)&29& 0.02\\
			{$12^3(1728)$}&1.4359&52&0.13&(0.4468,1.0)&39&0.06\\
			{$16^3(4096)$}&1.1025&66&1.87&(0.3465,1.0)&48&0.99\\
			{$20^3(8000)$}&0.8943&79&9.17&(0.2823,1.0)&56&4.82\\
			{$24^3(13824)$}&0.7520&92&30.89&(0.2380,1.0)&65&16.69\\
			\hline
		\end{tabular}
	}
\end{table}

From \Cref{tab:alpha_q}, one can find that the GADI-HS method spends about half the
CPU time compared with the HSS method. It is consistent with \Cref{th:GADI-HSalpha}, which shows the convergence speed of the GADI-HS scheme is faster than the HSS method. These results demonstrate that
the GADI-HS scheme derived from the new GADI framework accelerates the
convergence speed in solving \cref{eq:3DConvDiff}.
However, we can find that both methods cost much time for relatively large
linear systems.

\subsubsection{Applying the Practical GADI-HS method}
\label{subsubsec:PGADIHS}

Next, we use the practical GADI-HS (\Cref{alg:pra GADI-HS}) and the IHSS
methods to solve much larger linear systems.
Meanwhile, we employ the GPR method to predict the splitting parameters in the
practical GADI-HS method.
The inner subsystems \cref{eq:pra GADI-HS} and \cref{eq:pra GADI-HS2}
are solved by the CG and the CGNE methods, respectively.
The inner iteration is terminated if residuals satisfy
\begin{equation*}
\|p^{(k)}\|_{2} \leq 1 \times
10^{-\delta_{H}}\|r^{(k)}\|_{2},~~\|q^{(k)}\|_{2}\leq 1 \times
10^{-\delta_{S}}\|r^{(k+\frac{1}{2})}\|_{2},
\end{equation*}
where $\delta _{H}$ and $\delta _{S}$ are controllable tolerances in the inner iteration
to balance the Hermitian and skew-Hermitian parts in the linear subproblems.
In these tests, we set $\delta_{H}=\delta_{S}=2$.

There has been no theoretical approach to estimate splitting parameters for inexact ADI methods, such as the practical GADI-HS and the IHSS schemes.
For the IHSS method, the splitting parameters can be obtained by the traversing method,
be determined experimentally \,\cite{bai2003hermitian,benner2009on,wang2013positive}, or obtained directly by the GPR method as presented in \cref{subsubsec:IHSSGRP}.
Here, we use the traversing method to obtain the relatively optimal parameter
$\alpha^*$ of IHSS in the traversing interval $(0,3]$ with a step size of $0.01$.
We use the GPR method to predict the splitting
parameter $\alpha$ for the practical GADI-HS scheme.
\Cref{tab:PGADIHSsetting} gives the training, test, and
retrained data sets.
$\alpha$ in the training data set of the GPR approach is produced by
traversing parameters as the IHSS method does, but for small-scale linear
systems, $n$ from $2$ to $66$ with different step size $\Delta n$.
The matrix order  $n$ in the test data set is from $1$ to $120$ with $\Delta n=1$, while $n$ in the retrained data set is from $70$ to $120$ with $\Delta n =6$.
\Cref{fig:e1_GRP} shows the optimal parameter regression and prediction processes for
the practical GADI-HS method. From \Cref{fig:e1_GRP}, we can find that
adding predicted points into the training set which forms
the retrained set can shrink the confidence interval.
This improves the prediction accuracy and strengthens the
generalization ability of the regression model.
Since $\omega$ is insensitive to the scale of matrix in the practical GADI-HS, we obtain the optimal $\omega_p=1.9$ by the traversing method for small-scale systems.

\begin{table}[!hbtp]
	\centering
	\footnotesize{
	\caption{The training, test, and retrained data sets of the practical GADI-HS method
		in GPR algorithm.\label{tab:PGADIHSsetting}}
		\begin{tabular}{|c|c|}
			\hline
			& $n: 2\sim10$, $\Delta n=2$\\
		{Training set}  	& $n: 12\sim20$, $\Delta n=4$\\
			& $n: 28\sim44$, $\Delta n=8$\\
			& $n: 56\sim66$, $\Delta n=10$\\
			\hline
			{Test set} & $n: 1\sim120$, $\Delta n =1$     \\			
			\hline
			{Retrained set}&$n: 70\sim120$, $\Delta n=6$ \\
			\hline
		\end{tabular}
	}
\end{table}

\begin{figure}[hbtp]
	\centering
	\includegraphics[width=4.25cm]{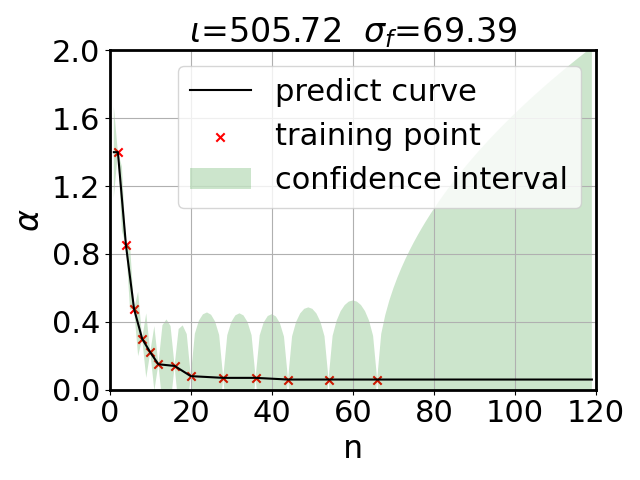}
	\includegraphics[width=4.25cm]{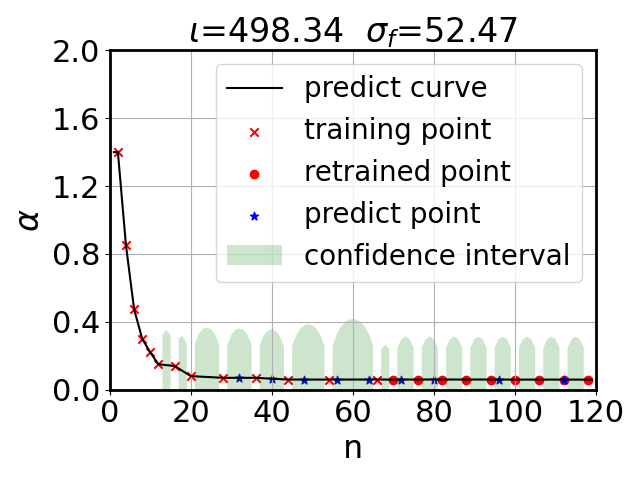}
	\caption{The regression curve of $\alpha$ against $n$ for the practical GADI-HS method.
		\label{fig:e1_GRP}}
\end{figure}

\Cref{tab:e1_e2_result} presents the numerical results obtained by the IHSS and
practical GADI-HS methods for different scale discretization systems.
$IT_{CG}$ and $IT_{CGNE}$ denote the iterations for solving subsystems in
inexact methods. These results show that the IHSS method can solve relatively large linear systems
($n^3$ from $32^3$ to $64^3$) with less
CPU time than the HSS algorithm does when the relatively optimal parameter $\alpha^*$ is used.
However, obtaining $\alpha^*$ in the IHSS costs lots of traversal time, as
shown in the last column in \Cref{tab:e1_e2_result}.
For example, when $n=64^3$,  the traversal time of the IHSS method is more than
$7000$ seconds. As $n$ further increases, the traversal times would become unaffordable.
\begin{table}[!hbpt]
	\centering
	\footnotesize{
	\caption{Results of the IHSS method ($\alpha^*$ obtained by traversing interval
	$(0,3]$ with a step size of $0.01$) and the practical GADI-HS method
	(relatively optimal parameters $(\alpha_p,\omega_p)$ obtained by the GPR algorithm) for
	solving 3D convection-diffusion equation.
		\label{tab:e1_e2_result}}
		\begin{tabular}{|c|lccc|c|}
			\hline
			{Method}&{$n^3$}&	{$\alpha^*$}&{IT}&{CPU}&{Traversal} \\
			&&&(${IT_{CG}}$, ${IT_{CGNE}}$)&{(s)}&{CPU(s)} \\
			\hline
			&{$32^3(32768)$}&0.93&185 (4.19, 1.00)&0.83&477.77\\
			{IHSS}&{$48^3(110592)$}&0.90&369 (4.02, 1.00)&4.44& 2239.66\\
			&{$64^3(261144)$}&0.89&612 (3.95, 1.00)&20.43&7025.53\\
			\hline
			&{$n^3$}&({$\alpha_p,\omega_p$})&{IT}&{CPU}&{Traversal} \\
			&&&(${IT_{CG}, IT_{CGNE}}$)&{(s)}&{CPU(s)} \\
			\hline
			&{$32^3(32768)$}&(0.0699,1.9)&23 (23.35, 2.00)&0.26&0\\
			&{$48^3(110592)$}&(0.0599,1.9)&33 (21.45, 1.21)&1.09&0\\
			{Practical}&{$64^3(261144)$}&(0.0599,1.9)&54 (22.72, 1.07)&4.21&0\\
			{GADI-HS}& {$96^3(884736)$ }&(0.0596,1.9)& 110 (19.75, 1.00)&32.48&0\\
			& {$128^3(2097152)$}&(0.0595,1.9)&186 (13.92, 1.00)&113.82&0\\
			& {$216^3(10077696)$}&(0.0595,1.9)&478 (12.64, 1.00)&1539.48&0\\
			\hline
		\end{tabular}
	}
\end{table}
\begin{figure}[htbp]
	\centering
	\includegraphics[width=13cm]{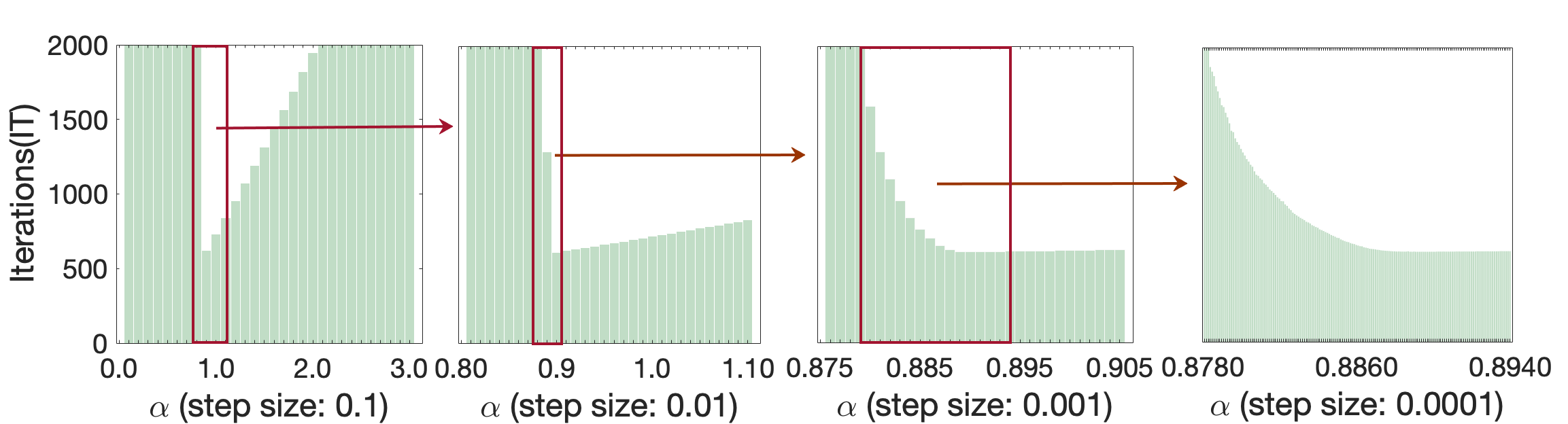}
	\caption{When $ n=64^3$, the variation of the iterations (IT) of the IHSS method
		on the traversal parameter $\alpha$ of different step size. \label{fig:e1_alpha}}
\end{figure}
Meanwhile, it should be emphasized that the efficiency of ADI schemes is very sensitive to the accuracy of splitting parameter $\alpha$.
As \Cref{fig:e1_alpha} shows, when $n=64^3$, the IT still changes as the traversing step size becomes $10^{-4}$.
However, it will cost much more traversing CPU time if one uses a smaller step size to find a more optimal $\alpha$.
Therefore, obtaining an efficient performance of the IHSS scheme may require undergoing very expensive computation by finding a relatively optimal splitting parameter $\alpha$.

Compared with the IHSS method, the practical GADI-HS scheme is much more efficient in combination with the GPR approach.
The GPR method can predict a relatively optimal $\alpha$ through training a mapping from $n$ to $\alpha$.
The small set of training data is obtained by the offline computation for small-scale systems. The learned mapping can directly predict a relatively optimal splitting parameter $\alpha$ for large systems without any extra computational amount, as \Cref{fig:e1_GRP} shows.
Therefore in the practical (online) computation, the practical GADI-HS scheme can obtain the solution with an efficient one-shot computation and does not consume traversal CPU time anymore. And with accurate $\alpha$, the practical GADI-HS method can efficiently solve large linear systems, even of more than ten million orders. As an example, \Cref{fig:convergentCurves3D} plots the convergent curve of IHSS and practical GADI-HS when $n^3=64^3$.
\begin{figure}[htbp]
	\centering
	\footnotesize{
		\includegraphics[width=5cm]{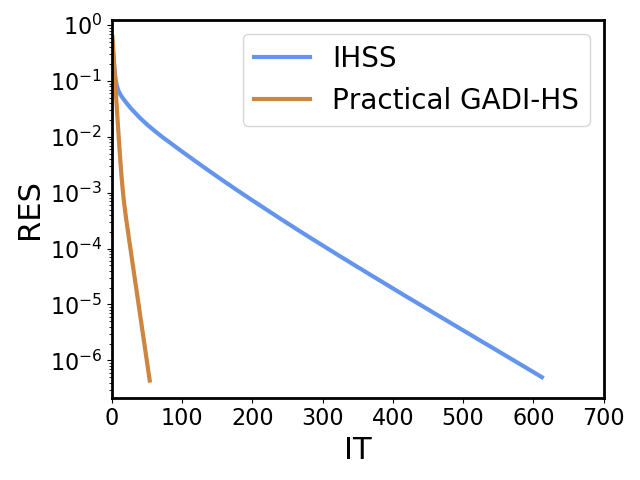}
		\caption{
			The convergent curves of IHSS and practical GADI-HS when  $n^3=64^3$ for solving 3D convection-diffusion equation.} \label{fig:convergentCurves3D}
	}
\end{figure}

Meanwhile, from Tables \ref{tab:PGADIHSsetting} and \ref{tab:e1_e2_result}, one can find that the maximum order in the training set is $66^3$ (287496), while the order of matrix in the test set can reach the level of ten million ($216^3$). It means that the scale of the predictable $n$ can attain about $35$ times. It means that the order of matrix in the given training set is $n$, and the prediction ability of the GPR method can reach a scale of $35n$. These results demonstrate that the GPR model has a high generalization capability.

We further plot the CPU cost of the HSS, IHSS, GADI-HS, and
practical GADI-HS methods as $n^3$ increases from $8^3$ to $216^3$ as shown in \Cref{fig:e2_results}.
The concrete CPU times can be found in Tables \ref{tab:alpha_q} and \ref{tab:e1_e2_result}.
For the IHSS method, \Cref{fig:e2_results} only presents its best performance in
solving \cref{eq:3DConvDiff} and ignores the CPU time of finding $\alpha^*$.
From these results, one can find that the practical GADI-HS method costs less
CPU time. For example, when $n^3=128^3$,
the IHSS method (594.28 seconds) spends about five times CPU time more than the practical
GADI-HS scheme (113.82 seconds) does. It should be emphasized that the IHSS method
costs much traversal time of over 10000 seconds to obtain a good performance
(traversing $\alpha^*$ in [0.8,0.9] with a step size of 0.01), while the practical GADI-HS method does not have traversing time due to the GPR method. If one considers all costs,
the practical GADI-HS saves over 100 times the computational cost for the case of $n^3=128^3$.
%
\begin{figure}[htbp]
	\centering
	\includegraphics[width=8cm]{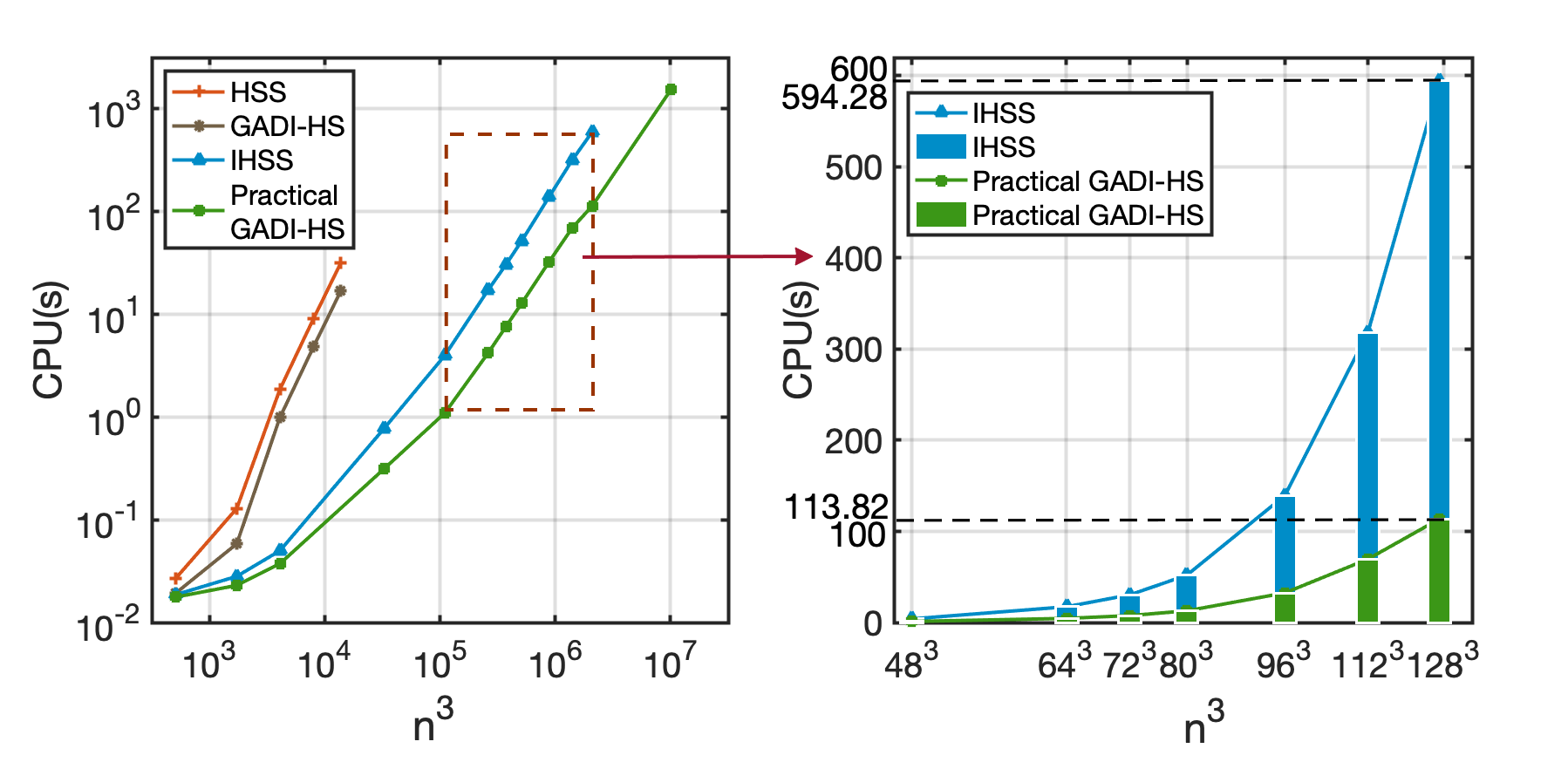}
	\caption{Left: The CPU time of HSS, IHSS, GADI-HS, and practical GADI-HS
	methods for different $n$. Right: The comparison of CPU time between IHSS and practical GADI-HS methods for $n$ from 48 to 128.\label{fig:e2_results}}
\end{figure}

Moreover, we compare the practical GADI-HS method with the GMRES, preconditioned GMRES, and iLU methods for solving the 3D convection-diffusion equation, as \Cref{tab:IHSS_PGADI-HS_GMRES_iLU} shows. It can be seen that the practical GADI-HS method is more efficient than the GMRES and iLU methods, and has similar performance with some efficient preconditioned GMRES, such as the Toeplitz preconditioner proposed by Strang \cite{strang1986proposal}. It should be stressed that the GADI framework is the first work to greatly increase the performance of splitting methods in solving large sparse linear systems. It has enormous potential to further promote the performance
of the GADI methods corresponding the structure of linear systems.

\begin{table}[htbp]
	\centering
	\footnotesize{
	\caption{
	A comparison of the practical GADI-HS, GMRES, preconditioned GMRES, and iLU methods for solving 3D
	convection-diffusion equation.\label{tab:IHSS_PGADI-HS_GMRES_iLU}}
		\begin{tabular}{|c|c|c|c|c|c|c|c|}
			\hline
			&
			\multicolumn{2}{c|}{Practical GADI-HS} & \multicolumn{2}{c|}{GMRES} & \multicolumn{2}{c|}{Preconditioned GMRES} & iLU
			\\
			\hline
			{$n^3$} & {IT} & {CPU(s)} & {IT} & {CPU(s)} & {IT} & {CPU(s)} & {CPU(s)} \\
			\hline
			{$32^3$ (32768)}&  23 & 0.26 & 97 & 0.46 & 17 & 0.30 & 32.56 \\
			{$48^3$ (110592}&  33 & 1.09 & 141 & 1.91 & 24 & 1.22 & 1166  \\
			{$64^3$ (262144)}& 54 & 4.21 & 184 & 7.44 & 35 & 4.26 &$>$5000\\
			{$96^3$ (884736)}& 110 & 32.48 & 265 & 75.25 & 57 & 26.13 & \\
			\hline
		\end{tabular}
		}
\end{table}

\subsubsection{Predicting the optimal splitting parameter of the IHSS method by the GPR}
\label{subsubsec:IHSSGRP}

 Here we apply the GPR method to predict the optimal splitting parameter of the IHSS scheme. The training, test, and retrained data sets are given in
\Cref{tab:GPR_IHSS}. The implementation of the GPR method is the same as the
\Cref{subsubsec:PGADIHS}. \Cref{fig:GPR_IHSS} gives the inference curve
of the GPR.
\begin{table}[!hbtp]
	\centering
	\footnotesize{
	\caption{
	The training, test, and retrained sets of the GPR approcah for
	predicting the optimal splitting parameter of the IHSS method
	in solving 3D convection-diffusion equation.}\label{tab:GPR_IHSS}
		\begin{tabular}{|c|c|}
			\hline
			& $n: 2 \sim 10$, $\Delta n=2$ \hspace{0.3cm}\\
		    {Training set} & $n: 12 \sim 20$, $\Delta n=4$\\
		    			   & $n: 24 \sim 48$, $\Delta n=8$\\
						   & $n: 64 $ \hspace{1.8cm} \\
			\hline
			{Test set} & $n: 1\sim 120$, $\Delta n =1$     \\			
			\hline
			{Retrained set}&$n: 70 \sim 120$, $\Delta n=6$ \\
			\hline
		\end{tabular}
	}
\end{table}
\begin{figure}[hbtp]
	\centering
	\includegraphics[width=5cm]{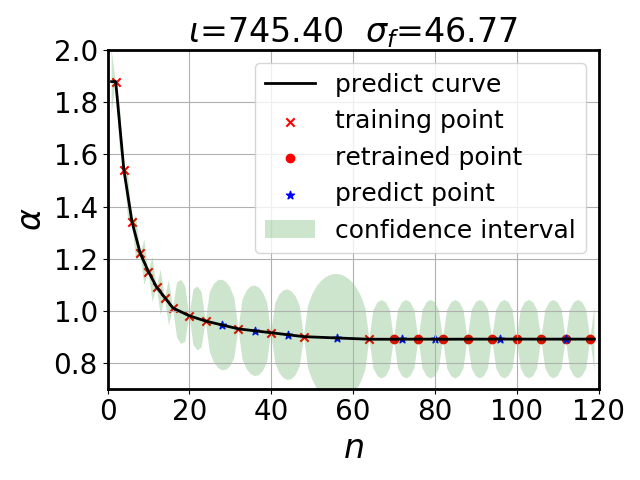}
	\caption{The $\alpha$ regression curve of the IHSS method.
\label{fig:GPR_IHSS}}
\end{figure}
\begin{table}[htbp]
	\centering
	\footnotesize{
	\caption{
	A performance comparison of traversing parameter $\alpha$ and predicting
	parameter $\alpha^*$ by the GPR in the IHSS method when
	solving 3D convection-diffusion equation.\label{tab:e1_IHSS_result}}
	\begin{tabular}{|c|c|c|c|c|}
			\hline
			{$n^3$} & {$\alpha$} & {$\alpha^*$} & {IT} & {IT$^*$}  \\
			\hline
			{$28^3$ (21952)}& 0.94 & 0.9450 & 149 & 148\\
			{$36^3$ (46656)}& 0.92& 0.9225& 226 & 226\\
			{$44^3$ (85184)}& 0.91 &0.9075& 317& 319 \\
			{$56^3$ (175616)}& 0.90 & 0.8950 & 484 & 488 \\
			{$72^3$ (373248)}& 0.89 &0.8899 & 759 & 759 \\
			{$80^3$ (512000)}& 0.89 & 0.8900 & 920 & 920 \\
			{$96^3$ (884736)}& 0.89 & 0.8896 & 1284 & 1285\\
			{$112^3$ (1404928)}& 0.88 & 0.8884 & 1702 & 1682 \\
			\hline
		\end{tabular}
	}
\end{table}


\Cref{tab:e1_IHSS_result} shows the prediction results and the performance of
IHSS method. In \Cref{tab:e1_IHSS_result}, the second column is the optimal
traversing parameter $\alpha$ with step size $0.01$, the third column is the
predicted parameter $\alpha^*$ obtained by the GPR, and the fourth (IT) and fifth
(IT$^*$) columns are iteration steps corresponding to splitting
parameters $\alpha$ and $\alpha^*$, respectively.
From \Cref{tab:e1_IHSS_result}, we can find that the GPR method can predict accurate splitting
parameters which are consistent with the traversing method. As a result, the
iteration steps IT and IT$^*$ of the IHSS method are nearly the same for convergence.
More significantly, when the dimension $n^3$ of the linear system becomes large to
$112^3$, the GPR method can predict more accurate splitting parameter which can
promote the performance of the IHSS. These results demonstrate that the GPR can be
applied to other ADI schemes.

\begin{remark}
Here, let us give a remark to show the reasonableness of using the dimension $n$ as the input variable in the GRP method.
\Cref{tab:condition_number} shows the relationship between the condition number and dimensional coefficient matrix $A$ of the 3D convection-diffusion equation. It can be seen that the condition number of coefficient matrix $A$ increases as the dimension $n$ increases. This means that minimal or maximal eigenvalues of $A$ are depend on $n$. The goal of GPR is to learn a mapping from a readily available quantity to the optimal splitting parameter in the GADI framework. The dimension $n$ is the required quantity and can reflect some essential features of the coefficient matrix, such as eigenvalues. Thus, it is reasonable to choose $n$ as the input variable.
\end{remark}

\begin{table}[htbp]
  \footnotesize{
  \caption{
  The condition number of $A$ with different $n$ when
	solving 3D convection-diffusion equation.\label{tab:condition_number}}}
  \centering
  \footnotesize{
    \begin{tabular}{|c|c|c|c|c|c|}
      \hline
      $n^3$ & $8^3$ & $16^3$ & $32^3$ & $48^3$ & $64^3$ \\
      \hline
      Condition number & 53.0767 & 192.6712 & 727.6907 & 1.6079e+03 & 2.8294e+03 \\
      \hline
    \end{tabular}
    }
\end{table}

\subsection{The continuous Sylvester equation}

In this subsection, we consider the continuous Sylvester equation \cref{eq:cSe}. The sparse matrices $A$ and $B$ have the following structure:
\begin{equation*}
A=B=M+2rN+\frac{100}{(n+1)^{2}}I,
\end{equation*}
where $r$ is a parameter which controls Hermitian dominated or skew-Hermitian dominated of the matrix, $M$, $N \in \mathbb{C}^{n\times n}$ are tridiagonal
matrices $M=\mbox{Tridiag}(-1, 2,- 1)$, and $N=\mbox{Tridiag}(0.5, 0, -0.5)$.
We apply the HSS and the GADI-AB methods to solve \cref{eq:cSe} for $r=0.01$, $0.1$, and $1$, respectively.
All iterative methods are started from zero matrix and stopped once
the current residual norm satisfies
$
{\|R^{(k)}\|_{F}}/{\|R^{(0)}\|_{F}} \leq 10^{-6},
$
where $R^{(k)}=C-AX^{(k)}-X^{(k)}B$.

\begin{table}[htbp]
	\caption{Results of the HSS and GADI-AB methods for solving the continuous Sylvester equation. The relatively optimal splitting parameter $\alpha^*$
	$(\omega^*)$ is obtained by traversing the interval $(0,3]$ $([0,2))$ with a step size of $0.01$ $(0.1)$.	
	\label{tab:results_e1}}
	\centering
	\footnotesize{
		\begin{tabular}{|l|ccc|ccc|ccc|}
			\hline
			&\multicolumn{3}{c|}{} &\multicolumn{3}{c|}{{HSS}}&\multicolumn{3}{c|}{} \\
			&\multicolumn{3}{c|}{{$r=0.01$}} &
			\multicolumn{3}{c|}{{$r=0.1$}}&\multicolumn{3}{c|}{{$r=1$}}  \\
		   \hline
			{$n$}&	{IT}  & {CPU}(s) &{$\alpha^*$} &
			{IT}  & {CPU}(s) &{$\alpha^*$} &{IT}  &
			{CPU}(s) &{$\alpha^*$}  \\
			\hline
			{16}&23&0.0059&1.23&22&0.0051&1.28&14&0.0032&1.57\\
			{32}&43&0.0307&0.64&41&0.0281&0.64&21&0.0146&1.07\\
			{64}&83&0.3512&0.33&69&0.3053&0.32&31&0.1242&0.87\\
			{128}&160&3.7057&0.17&112&2.6082&0.19&46&1.0632&0.74\\
			{256}&312&42.9613&0.09&163&24.0084&0.14&68&10.3410&0.58\\
			\hline
			&\multicolumn{3}{c|}{}
			&\multicolumn{3}{c|}{{GADI-AB}}&\multicolumn{3}{c|}{} \\
			&\multicolumn{3}{c|}{{$r=0.01$}} &
			\multicolumn{3}{c|}{{$r=0.1$}}&\multicolumn{3}{c|}{{$r=1$}}  \\
			 \hline
			{$n$}&	{IT}  & {CPU}(s) &({$\alpha^*,\omega^*$}) &
			{IT}  & {CPU}(s)
			&({$\alpha^*,\omega^*$})&{IT}  & {CPU}(s) &({$\alpha^*,\omega^*$}) \\
			\hline
			{16}&12&0.0001&(1.18,0.0)&12&0.0001&(1.18,0.0)&8&0.0001&(1.87,0.0)\\
			{32}&22&0.0007&(0.62,0.0)&21&0.0007&(0.65,0.0)&12&0.0004&(1.28,0.1)\\
			{64}&42&0.0038&(0.33,0.0)&38&0.0035&(0.36,0.0)&16&0.0011&(0.97,0.1)\\
			{128}&81&0.0443&(0.17,0.0)&63&0.0370&(0.22,0.0)&21&0.0063&(0.76,0.1)\\
			{256}&157&0.4499&(0.09,0.0)&90&0.2457&(0.15,0.0)&29&0.0605&(0.54,0.1)\\
			\hline
		\end{tabular}
	}
\end{table}
\Cref{tab:results_e1} lists the simulation results of the HSS and GADI-AB methods.
For the continuous Sylvester equation, the HSS and GADI-AB methods do not have theories
to estimate relatively optimal splitting parameters.
Here we can use the traversal approach to estimate the relatively optimal
splitting parameters.
The optimal parameter $\alpha^*$ is obtained by traversing
the interval $(0,3]$ with a step size of $0.01$, while
the optimal parameter $\omega^*$ is obtained through traversing the
interval $[0,2)$ with a step size of $0.1$.
From \Cref{tab:results_e1}, one can find that the GADI-AB method is much more
efficient than the HSS method in terms of the IT and CPU.
For example, when $n=256$,
\Cref{fig:e2_ar} shows the acceleration ratio of the GADI-AB method over the HSS
scheme in terms of CPU time.
\begin{figure}[htbp]
	\centering
	\includegraphics[width=4.5cm]{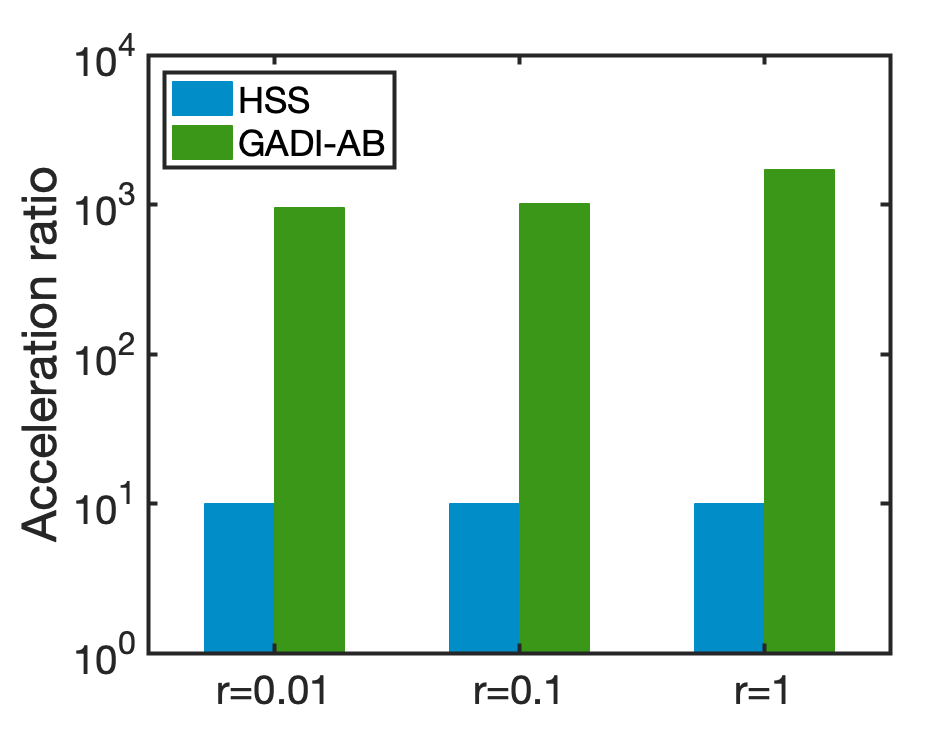}
	\caption{
		The acceleration ratio of the GADI-AB against the HSS when $n=256$.
		\label{fig:e2_ar}
	}
\end{figure}
To clearly demonstrate the acceleration
ratio, we take the CPU cost of the HSS as a reference (set as ``10'').
As shown in \Cref{fig:e2_ar}, the GADI-HS method spends much less CPU time than the
HSS scheme does for different $r$ with acceleration ratio $95.49$ ($r=0.01$),
$97.71$ ($r=0.1$), and $170.92$ ($r=1$).
\Cref{fig:convergentCurvesSylvester} gives the corresponding convergent curves of HSS and GADI-AB.
\begin{figure}[hbtp]
	\centering
	\includegraphics[width=4cm]{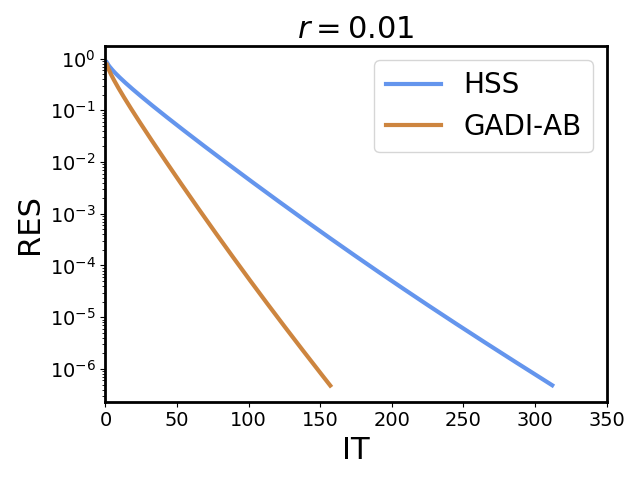}
	\includegraphics[width=4cm]{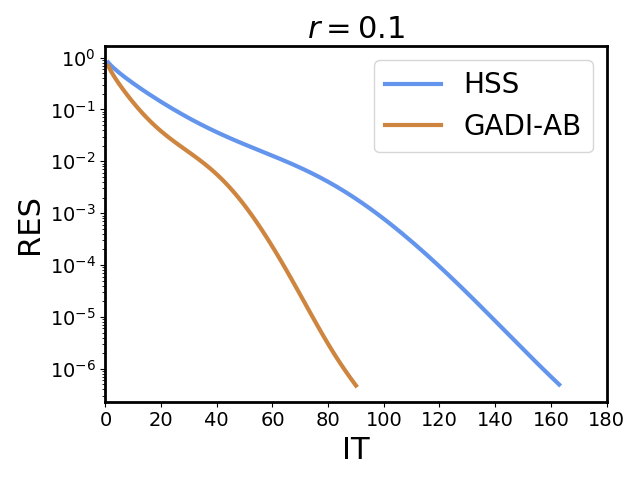}
	\includegraphics[width=4cm]{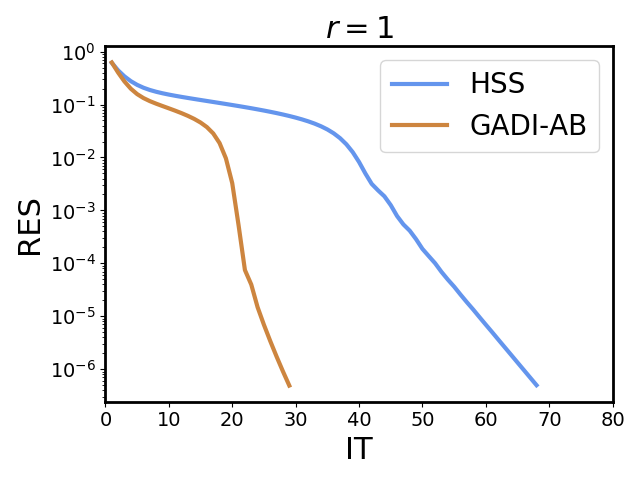}
	\caption{The convergent curves of HSS and GADI-AB for
			solving the continuous Sylvester equation \cref{eq:cSe} when $n=256$.
			\label{fig:convergentCurvesSylvester}}
\end{figure}

These numerical results demonstrate that the HSS and the GADI-AB methods can
efficiently solve the continuous Sylvester equation \cref{eq:cSe} with relatively optimal
splitting parameters. And our proposed GADI-AB method indeed saves much
more time compared with the HSS method.
However, obtaining relatively optimal splitting parameters through the traversal
method is expensive.
For instance, when $n=128, r=0.01$, the HSS method takes over 6000 seconds to
traverse $\alpha$. The traversal time will greatly increase as $n$ increases.
In the following, we will apply the GPR method to accelerate the GADI-AB
method and calculate larger-scale matrix equations.

Analogously, we find that $\omega$ is an insensitive parameter. Therefore we obtain a relatively optimal $\omega_p$ through the traversing method for small-scale problems. And we use the GPR method to predict the relatively optimal $\alpha$.
\Cref{tab:GADI-AB_GRPset} gives the test, training, and retrained data sets.
\Cref{fig:e1_GADI-AB_GR} shows the parameter regression process of the GADI-AB
method, and \Cref{tab:e1_GADI-AB_result} presents the prediction results obtained through curves.
$\alpha_p$ denotes the prediction parameter obtained by the GPR method.

\begin{table}[!hbtp]
	\centering
	\caption{The training, test, and retrained sets used in the GPR algorithm for the GADI-AB method.
		\label{tab:GADI-AB_GRPset}}
	\footnotesize{
		\begin{tabular}{|c|c|c|c|}
			\hline
			&{$r = 0.01$}&    {$r = 0.1$} &  	{$r = 1$}   \\	
			\hline
			{Training} & $n: 4\sim 20$, $\Delta n=4$ & $n: 4\sim 20$, $\Delta n=4$& $n:
			4\sim 20$, $\Delta n=4$\\	{set}  	& $n: 24\sim 72$, $\Delta n=8$ & $n:
			24\sim 72$, $\Delta n=8$ & $n: 24\sim 72$, $\Delta n=8$ \\
			& $n: 80\sim 112$, $\Delta n=16$& $n: 80\sim 112$, $\Delta n=16$& $n:
			80\sim 112$,
			$\Delta n=16$ \\
			\hline
			{Test set} & $n: 1\sim 500$, $\Delta n=1$  & $n: 1\sim 500$, $\Delta n=1$& $n:
			1\sim 1000$, $\Delta n=1$\\			
			\hline
			{Retrained}&$n: 120\sim 500$, & $n: 120\sim 500$, & $n: 120\sim 1000$, \\
			{ set}&$\Delta n=30$& $\Delta n= 10$ & $ \Delta n=50$ \\
			\hline
		\end{tabular}
	}
\end{table}
\begin{figure}[hbtp]
	\centering
	\includegraphics[width=4.25cm]{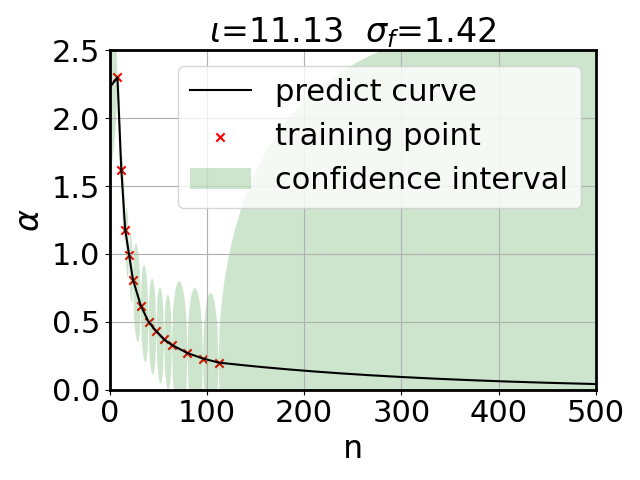}
	\includegraphics[width=4.25cm]{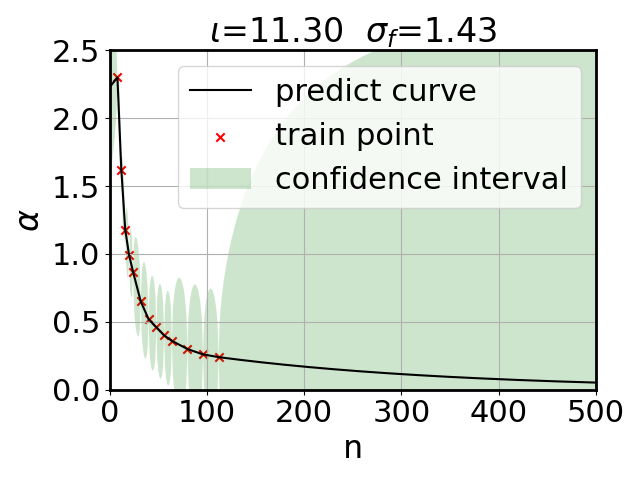}
	\includegraphics[width=4.25cm]{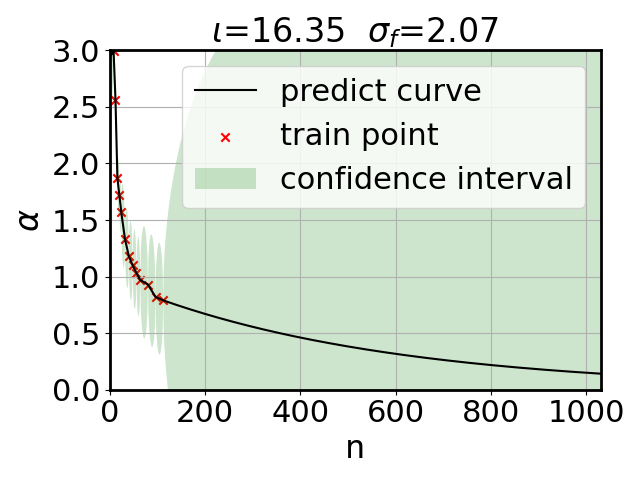}
	\includegraphics[width=4.25cm]{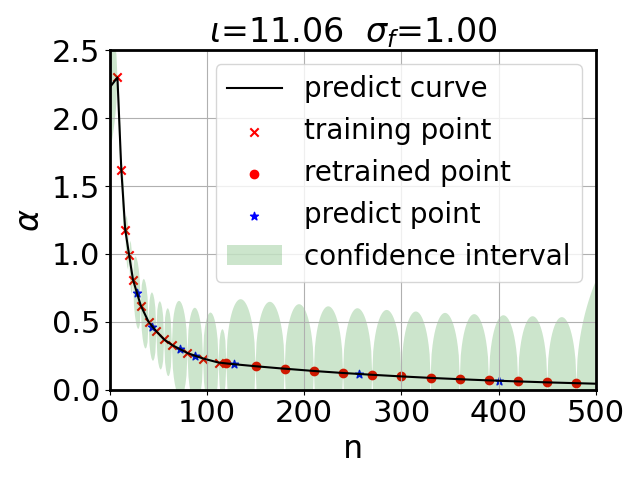}
	\includegraphics[width=4.25cm]{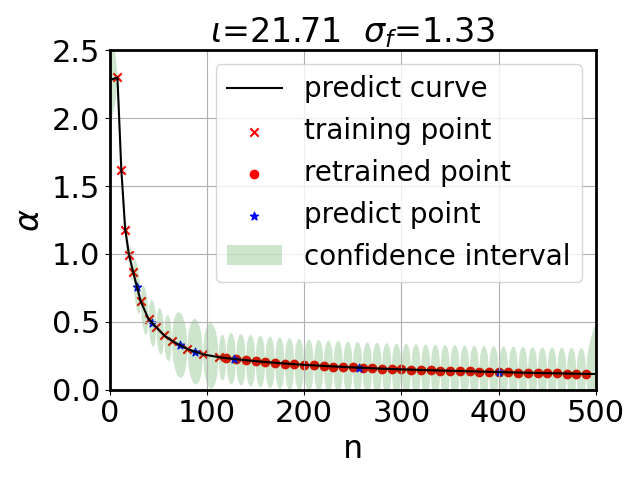}
	\includegraphics[width=4.25cm]{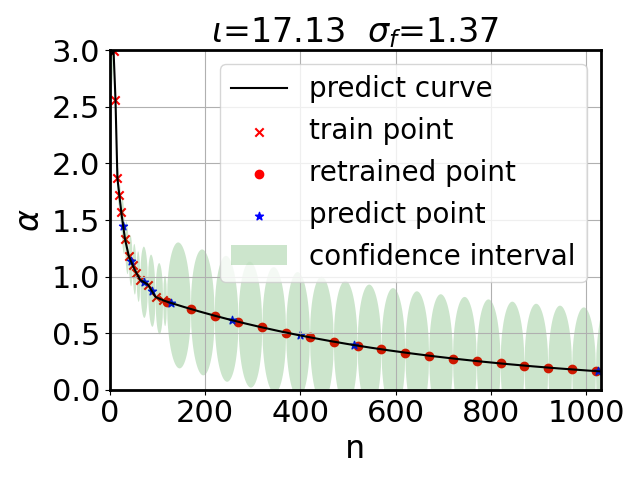}
	\caption{The regression curve of $\alpha$ for the GADI-AB method (Left:
		$r=0.01$; Middle: $r=0.1$; Right: $r=1$).
		\label{fig:e1_GADI-AB_GR}}
\end{figure}
\begin{table}[!htbp]
	\centering
	\caption{Results of the GADI-AB method solving the continues Sylvester equation by predicting parameter $\alpha_p$ of GPR.\label{tab:e1_GADI-AB_result}}
	\footnotesize{
		\begin{tabular}{|c|c|c|c|c|}
			\hline
			&{$n$} & $( \alpha_p,
			\omega_p)$&{$IT_{\alpha_p}$}&{CPU}(s)\\
			\hline		
			&{256}&(0.1161,0.0)&200& 0.4827\\
			${r=0.01}$&{400}&(0.0651,0.0)&258& 1.4994\\
			&{512}&(0.0421,0.0)&319& 4.6850\\
			\hline
			&{256} &(0.1621,0.0)&93& 0.2348\\
			&{400} &(0.1299,0.0)&110& 0.5985\\
			${r=0.1}$&{512} &(0.1144,0.0)&120& 1.5475\\
			&{1024}&(0.0668,0.0)&201&18.8272\\
			\hline
			&{256} &(0.6170,0.1)&30& 0.0605\\
			&{400} &(0.4808,0.1)&36& 0.1706\\
			${r=1}$&{512} &(0.3961,0.1)&40& 0.3263\\
			&{1024} &(0.1654,0.1)&92& 6.6008\\
			&{2048}&(0.0285,0.1)&534&289.6384\\
			\hline
		\end{tabular}
	}
\end{table}

From \Cref{fig:e1_GADI-AB_GR}, we also find that putting the predicted data into a retrained set can shrink the confidence interval.
\Cref{fig:e1_GADI-AB_GR} and \Cref{tab:e1_GADI-AB_result} show that
the scale of the predictable $n$ can reach 4.57 times ($r=0.01$), 9.14 times ($r=0.1$), and 18.29 times ($r=1$).
These results demonstrate that the GPR method has a good generalization capability for solving the continuous Sylvester equation.
The GPR method also provides an accurate optimal parameter prediction without an expensive consumption.
The resulting method allows us to solve much larger Sylvester matrix equations (see \Cref{{tab:e1_GADI-AB_result}}) than these methods in \Cref{tab:results_e1} do through the traversal way to obtain relatively optimal splitting parameters.

To further demonstrate the performance of our proposed method, we also compare the existing ADI algorithms with the GADI-AB method for solving the
continuous Sylvester equation \cref{eq:cSe}.
\Cref{tab:compwithOthers} summarizes these results. The data of other
methods all come from corresponding references.
From \Cref{tab:compwithOthers}, one can find that the proposed GADI-AB method is the most efficient, tens to thousands of times faster than the existing methods even ignoring the consumption of obtaining a relatively optimal splitting parameters under almost the same hardware and software environments.
\begin{table}[htbp]
	\centering
	\caption{Results of ADI methods of solving continuous Sylvester equation with $n=256$.
		\label{tab:compwithOthers}}
	\footnotesize{
		\begin{tabular}{|l|l|c|c|c|r|}
			\hline
			&{Paper}
			&{Hardware Environment}&{Method}&{IT}&{CPU(s)}\\
			\hline
			&\cite{bai2011hermitian}&---&HSS&203&44.67\\
			&\cite{bai2011hermitian}&---&SOR&310&244.41\\
			&\cite{wang2013positive}&1.4 GHz, 2GB RAM&HSS&518&1736.07\\
			{$r=0.01$}&\cite{wang2013positive}&1.4 GHz, 2GB RAM&TSS&288&1005.85\\
			&\cite{ZHENG2014145}&2.2 GHz, 8GB RAM&NSS&118&20.12\\
			&\cite{ke2014a}&2.4 GHz, 2GB RAM&NSCG&287&101.05\\
			&\cite{ke2014a}&2.4 GHz, 2GB RAM&PNSCG&16&5.81\\
			\rowcolor{mgray}
			&\textbf{Our}&\textbf{2.3 GHz, 8GB RAM}&\textbf{GADI-AB}&\textbf{157}&\textbf{0.45}\\
			\hline
			&\cite{bai2011hermitian}&---&HSS&156&33.89\\
			&\cite{bai2011hermitian}&---&SOR&304&236.69\\
			&\cite{wang2013positive}&1.4 GHz, 2GB RAM&HSS&274&961.41\\
			{$r=0.1$}&\cite{wang2013positive}&1.4 GHz, 2GB RAM&TSS&227&813.45\\
			&\cite{ke2014a}&2.4 GHz, 2GB RAM&NSCG&170&61.22\\
			&\cite{ke2014a}&2.4 GHz, 2GB RAM&PNSCG&75&14.84\\
			\rowcolor{mgray}
			&\textbf{Our}&\textbf{2.3 GHz, 8GB RAM}&\textbf{GADI-AB}&\textbf{90}&\textbf{0.25}\\
			\hline
			&\cite{bai2011hermitian}&---&HSS&85&20.49\\
			&\cite{bai2011hermitian}&---&SOR&256&205.23\\
			{$r=1$}&\cite{wang2013positive}&1.4 GHz, 2GB RAM&HSS&95&654.86\\
			&\cite{wang2013positive}&1.4 GHz, 2GB RAM&TSS&84&269.89\\
			&\cite{ZHENG2014145}&2.2 GHz, 8GB RAM&NSS&69&15.12\\
			\rowcolor{mgray}
			&\textbf{Our}&\textbf{2.3 GHz, 8GB RAM}&\textbf{GADI-AB}&\textbf{29}&\textbf{0.06}\\
			\hline
		\end{tabular}
	}
\end{table}

\section{Conclusions}\label{sec:conclu}

In this paper, we propose a GADI framework to solve large-scale sparse linear systems.
The new proposed framework can unify most existing ADI methods,
and can derive new methods as shown in \Cref{tab:GADIframework}.
In this work, we present three new ADI methods, including the GADI-HS, the
practical GADI-HS, and the GADI-AB schemes.
To address the challenge of how to choose optimal splitting parameters of splitting methods, we present a data-driven method, the GPR method, to predict relatively optimal splitting parameters, which greatly improves the efficiency of ADI methods.
Combining with the GADI framework and the GPR method, we can
address large linear sparse systems within an efficient one-shot computation.
Numerical results demonstrate that the proposed methods are faster by tens to thousands
of times than the (inexact) HSS-type methods.
Moreover, our proposed methods can solve much larger linear systems
that these existing ADI methods have not reached.

There are still lots of works based on the proposed methods. For instance, one is to apply the GPR method to predicting optimal parameters for more splitting schemes.
The second interesting work is to extend the GPR method to predict optimal
multiparameter methods. The third one is to develop more
efficient ADI schemes from the proposed framework to solve much larger structured
linear systems. The fourth interesting point is to extend the proposed methods to
nonlinear systems.\\

{{\bf Acknowledgement.} We sincerely thank the editor and anonymous referees for the insightful comments and suggestions. Those
comments are all valuable and very helpful for revising and improving our paper. We appreciate Qi Zhou for the kind help in revising our paper. }

\section*{Appendix A. Proofs.}
\section*{Appendix A.1. Proof of \Cref{le:pra GADI-HS}}\label{app:lemma_split_equivalence}
\begin{proof}
	Set
	\begin{equation}\label{eq:pra GADI splitting}
	\left\{ \begin{aligned}
	M_1&=\alpha I+H,\\
	N_1&=\alpha I-S,
	\end{aligned}\right.\quad
	\left\{\begin{aligned}
	M_2&=(S-\alpha I)[S-(1-\omega)\alpha I]^{-1}(\alpha I+S),\\
	N_2&=(S-\alpha I)[S-(1-\omega)\alpha I]^{-1}(\alpha I+S)-A.
	\end{aligned}\right.
	\end{equation}
	By using the first splitting scheme in \cref{eq:pra GADI splitting}, we have
	$
	(\alpha I+H)\tilde x^{(k+\frac{1}{2})}=(\alpha I-S)\tilde x^{(k)}+b,$
	i.e.,
	\begin{equation}\label{eq:pra GADI splitting1}
	\tilde x^{(k)}=(\alpha I-S)^{-1}[(\alpha I+H)\tilde x^{(k+\frac{1}{2})}-b].
	\end{equation}
	Additionally, from the practical GADI framework \cref{eq:pra GADI-HS}, we have
	\begin{equation}\label{eq:pra GADI splitting2}
	[S-(1-\omega)\alpha I]^{-1}(\alpha
	I+S)\tilde x^{(k+1)}=\tilde x^{(k)}+[S-(1-\omega)\alpha I]^{-1}(2-\omega)\alpha
	\tilde x^{(k+\frac{1}{2})}.
	\end{equation}
	We substitute \cref{eq:pra GADI splitting1} into \cref{eq:pra GADI splitting2},
	and utilize the practical GADI framework \cref{eq:pra GADI-HS} to obtain
	$\tilde x^{(k+1)}$
	\begin{equation*}
	\begin{aligned}
	& (S-\alpha I)[S-(1-\omega)\alpha I]^{-1}(\alpha
	I+S)\tilde x^{(k+1)}\\
	=&(S-\alpha I)[S-(1-\omega)\alpha I]^{-1}(\alpha
	I+S)\tilde x^{(k+\frac{1}{2})}-A\tilde x^{(k+\frac{1}{2})}+b.
	\end{aligned}
	\end{equation*}
	Considering the second splitting in \cref{eq:pra GADI splitting}, it is evident that
	$$ M_2 \tilde x^{(k+1)}=N_2\tilde x^{(k+\frac{1}{2})}+b.$$
	This implies that the practical GADI framework (\Cref{alg:pra GADI-HS})
	is equivalent to the assumption conditions \cref{eq:pra GADI-HS splitting}.
	Using \cref{eq:pra GADI-HS splitting}, we have
	\begin{equation*}
	\begin{aligned}
	\tilde x^{(k+1)}=&~\tilde x^{(k+\frac{1}{2})}+M_2^{-1}(\tilde r^{(k+\frac{1}{2})}+\tilde q^{(k+\frac{1}{2})})=M_2^{-1}N_2\tilde x^{(k+\frac{1}{2})}+M_2^{-1}b+M_2^{-1}\tilde q^{(k+\frac{1}{2})}\\
	=&~M_2^{-1}N_2[\tilde x^{(k)}+M_1^{-1}(\tilde r^{(k)}+\tilde p^{(k)})]+M_2^{-1}b+M_2^{-1}\tilde q^{(k+\frac{1}{2})}\\
	=&~M_2^{-1}N_2M_1^{-1}N_1\tilde x^{k}+M_2^{-1}(N_2M_1^{-1}+I)b+M_2^{-1}(N_2M_1^{-1}\tilde p^{(k)}+\tilde q^{(k+\frac{1}{2})}),
	\label{eq:pra GADIx^k+1}
	\end{aligned}
	\end{equation*}
	i.e., \cref{eq:pra GADI-HS scheme}. Thus, we complete the proof.
\end{proof}

\section*{Appendix A.2. Proof of \Cref{le:bound of GADI-HS}}\label{app:lemma_bound}

\begin{proof}
	By \cref{eq:HSiterMatrix} and the similarity invariance of matrix spectrum, we obtain
	\begin{align*}
	& \rho (T(\alpha,\omega))=\rho ((\alpha I+S)^{-1}(\alpha I+H)^{-1}(\alpha
	^{2} I+HS-(1-\omega)\alpha A))
	\\
	& \leq \alpha\|(\alpha I+S)^{-1}(\alpha I+H)^{-1}(\alpha I-(1-\omega)
	A)\|_{2}+\|(\alpha I+S)^{-1}(\alpha I+H)^{-1}HS\|_{2}
	\\
	& \leq \alpha \|(\alpha I+S)^{-1}\|_{2}\|(\alpha I+H)^{-1}\|_{2}(\alpha+|1-\omega||\|A\|_2)
	+\|(\alpha I+H)^{-1}H\|_{2}\|S(\alpha I+S)^{-1}\|_{2}.
	\end{align*}
	Scaling the last two items appropriately yields
	\begin{equation*}
	\begin{aligned}
	&\alpha \|(\alpha I+S)^{-1}\|_{2}\|(\alpha
	I+H)^{-1}\|_{2}(\alpha+|1-\omega|\|A\|_2)\\
	=&~\alpha \max \limits_{\lambda _{i} \in \lambda (H)} \Big |\frac{1}{\alpha
		+\lambda_{i}}\Big |\max \limits _{\sigma _{i} \in \sigma
		(S)}\Big|\frac{1}{\sqrt {\alpha ^{2}+\sigma _{i}^{2}}}\Big
	|(\alpha+|1-\omega|\|A\|_2)\\
	\leq&~ \frac{
		\alpha}{\alpha+\lambda_{min}}\cdot\frac{1}{\alpha}(\alpha+|1-\omega|\|A\|_2)=\frac{\alpha+|1-\omega|\|A\|_2}{\alpha+\lambda_{min}}.
	\end{aligned}
	\end{equation*}
	Since $H$ is a Hermitian positive definite matrix, then $\lambda_{i}>0$  and
	\begin{equation*}
	\begin{aligned}
	&\|(\alpha I+H)^{-1}H\|_{2}\|S(\alpha I+S)^{-1}\|_{2}\\
	=&\max \limits_{\lambda _{i}\in \lambda (H)}\Big |
	\frac{\lambda_{i}}{\alpha +\lambda_{i}}\Big|\max \limits_{\sigma _{i}\in
		\sigma (S)}\Big | \frac{\sigma _{i}}{\sqrt {\alpha ^{2}+\sigma
			_{i}^{2}}}\Big |
	=\frac{\lambda_{max}}{\alpha+\lambda_{max}}\cdot\frac{\sigma_{max}}{\sqrt{\alpha^2+\sigma_{max}^2}}\leq \frac{\lambda_{max}\sigma_{max}}{(\alpha+\lambda_{min})\alpha}.
	\end{aligned}
	\end{equation*}
	Therefore, we obtain
	\begin{equation*}
	\begin{aligned}
	\rho {(T(\alpha,\omega))}\leq\frac{\alpha^2+\alpha|1-\omega|\|A\|_2+\lambda_{max}\sigma_{max}}{\alpha(\alpha+\lambda_{min})}.
	\end{aligned}
	\end{equation*}
\end{proof}

\section*{Appendix A.3. Proof of \Cref{th:GADI-HSalpha}}
\begin{proof}
	(i) From \Cref{th:|mu| and |lamba|}(i), when $|\lambda_k|^2\leq a$ and $0\leq \omega<2$, then
	$$\rho(M(\alpha))\leq \rho(T(\alpha,\omega))<1.$$ If $\alpha^*$ and $\omega^*$ are the quasi-optimal parameters in this case, we have
	\begin{equation*}
	\rho(	T(\alpha^*,\omega^*))=\rho(M(\alpha^*)).
	\end{equation*}
	Furthermore, due to $\rho(T(\alpha,0))=\rho(M(\alpha))$ and
	\cref{eq:|mu_k|}, it is obvious that
	$$\omega^*=\argmin_{\omega}\Big\{\frac{1}{2}[\omega+(2-\omega)\sigma(\alpha)]\Big\}=0.$$
	The GADI-HS method reduces to the HSS iterative method, from \Cref{lem:HSSalpha}; then
	\begin{equation*}
	\alpha^*=\argmin_{\alpha}\Big\{\frac{1}{2}[\omega+(2-\omega)\sigma(\alpha)]\Big\}=\argmin_{\alpha}\Big\{\sigma(\alpha)\Big\}=\sqrt{\lambda_{min}\lambda_{max}}.
	\end{equation*}
	
	(ii) From \Cref{th:|mu| and |lamba|}(ii), when $a<|\lambda_k|^2$
	and $0< \omega<\frac{4a^2-4a+4b^2}{(1-a)^2+b^2}$, we have
	$$\rho(T(\alpha,\omega))<\rho(M(\alpha))<1.$$ Thus, we consider the bound of
	$\rho(T(\alpha,\omega))$ in inequality \cref{eq:bound of T}
	\begin{equation*}
	\rho(T(\alpha,\omega))\leq \delta(\alpha,\omega)=
	\frac{\alpha^2+\alpha|1-\omega|\|A\|_2+\lambda_{max}\sigma_{max}}{\alpha(\alpha+\lambda_{min})}.
	\end{equation*}
	Our objective is to minimize $\delta(\alpha,\omega)$.
	
	When $\alpha$ is fixed, $\delta(\alpha,\omega)$ reaches the minimum at
	$w=1$
	\begin{equation*}
	\omega^*=\argmin_{\omega}\left\{\delta(\alpha,\omega)\right\}=1,
	\end{equation*}
	then
	\begin{equation}\label{eq:min_alpha}
	\alpha^*=\argmin_{\alpha}\left\{\frac{\alpha^2+\lambda_{max}\sigma_{max}}{\alpha(\alpha+\lambda_{min})}\right\}:=\argmin_{\alpha}\left\{\frac{\alpha^2+p}{\alpha(\alpha+\lambda_{min})}\right\},
	\end{equation}
	where $p=\lambda_{max}\sigma_{max}$. The first-order optimal condition of
	\cref{eq:min_alpha} means
	$$
	\frac{\partial
		\delta(\alpha,1)}{\partial\alpha}=\frac{2\alpha(\alpha^2+\alpha\lambda_{min})-(\alpha^2+p)(2\alpha+\lambda_{min})}{(\alpha+\lambda_{min})^2\alpha^2}=0.$$
	Solving the above equation, $\alpha^*= \argmin_{\alpha}\left\{\delta(\alpha,\omega)\right\}=
	\frac{p+\sqrt{p^2+\lambda_{min}^2p}}{\lambda_{min}}$. From \cref{eq:min_alpha},
	we have
	\begin{equation*}
	\begin{aligned}
	\delta(\alpha^*,\omega^*)
	=\frac{2p^2+2\lambda_{min}^2p+2p\sqrt{p^2+\lambda_{min}^2p}}{2p^2+2\lambda_{min}^2p+2p\sqrt{p^2+\lambda_{min}^2p}+\lambda_{min}^2\sqrt{p^2+\lambda_{min}^2p}}<1.
	\end{aligned}
	\end{equation*}
\end{proof}

\section*{Appendix A.4: Two-dimensional parabolic equation}

Two-dimensional parabolic equationConsider the following 2D parabolic equation
\begin{equation}
	-u_{x_1 x_1}-u_{x_2 x_2}+2u_{x_1 x_2}+u_{x_1}=f(x_1, x_2), \quad (x_1, x_2)\in \Omega,
	\label{eq:2DPara}
\end{equation}
on the unit cube $\Omega=[0,1]\times[0,1]$ with homogeneous Dirichlet boundary
condition. We use the centered difference method to
discretize the parabolic equation \cref{eq:2DPara}, and obtain the linear system $Ax=b$. The coefficient matrix is
\begin{equation}
	A=I\otimes T_1+D_1\otimes T_2+D_2\otimes T_3,
	\label{eq:2Dlinearsys}
\end{equation}
where $D_1, D_2, T_1, T_2, T_3$ are tridiagonal matrices defined by
$$D_1=\mbox{Tridiag} (0,0,1),~ D_2=\mbox{Tridiag} (1,0,0),~ T_1=\mbox{Tridiag}
(-1-\beta,4,-1+\beta),$$
$$T_2=\mbox{Tridiag} (-1/2,-1,1/2), ~ T_3=\mbox{Tridiag}
(1/2,-1,-1/2), ~ \beta=1/(2n+2),$$ with $n$ is the degree of freedom along each dimension.
$x\in\mathbb{R}^{n^2}$  is the unknown vector of discretizing $u(x_1,x_2)$.
	$b\in\mathbb{R}^{n^2}$  is the discretization vector of $f(x_1,x_2)$ which
is determined by choosing the exact solution $u(x_1, x_2)=\sin( \pi x_1) \sin (\pi x_2)$.
All tests are started with the zero vector.
All iterative methods are terminated if the relative residual error satisfies
$\mbox{RES}={\|r^{(k)}\|_{2}}/{\|r^{(0)}\|_{2}} \leq 10^{-6}$,
where $r^{(k)}=b-A x^{(k)}$ is the $k$-step residual.

\begin{table}[!htbp]
	\centering
	\footnotesize{
	\caption{ A comparison of the HSS and the GADI-HS methods for solving 2D parabolic equation with theoretical quasi-optimal splitting parameters.\label{tab:alpha_2DPDE} }
	\centering
		\begin{tabular}{|c|c|c|c|c|c|c|}
			\hline
			&\multicolumn{3}{c|}{HSS} &
			\multicolumn{3}{c|}{GADI-HS} \\
			\hline
			{$n^2$} &{$\alpha_q$}&	{IT}  & {CPU(s)} &{$(\alpha_q, \omega_q)$}  &
			{IT}  & {CPU(s)} \\
			\hline
			{$16^2$}& 0.6156 & 77 & 0.0054  & (0.1158, 1.0) & 37 & 0.0035\\
			{$32^2$}& 0.3050 & 140 & 0.1243 & (0.0603, 1.0) & 64 & 0.0580 \\
			{$64^2$}& 0.1501 & 257 & 4.8332 & (0.0307, 1.0) & 114 & 2.1430\\
			{$96^2$}& 0.0991 & 373 & 35.6514 & (0.0206, 1.0) & 163 & 15.8759\\
			\hline
		\end{tabular}
		}
\end{table}
Firstly, we compare the performance of the GADI-HS and the HSS in solving the above
linear algebra equation $Ax = b$.
The parameters in the HSS and the GADI-HS methods are obtained
from Lemma 3.4 and Theorem 3.5, respectively.
\Cref{tab:alpha_2DPDE} presents corresponding numerical results, where ``IT" and ``CPU"
denote the required iterations and the CPU time (in seconds), respectively.
From \Cref{tab:alpha_2DPDE}, one can find that the GADI-HS method spends less than half of
CPU times compared with the HSS method. It is consistent with the prediction by Theorem 3.5
that shows the convergence speed of the GADI-HS scheme is faster than the HSS method.
These results demonstrate that the GADI-HS scheme derived from the new GADI framework
accelerates the convergence speed in solving \cref{eq:2DPara}.

Next, we use the Practical GADI-HS and the IHSS methods to solve much larger linear systems.
Meanwhile, we employ the GPR method to predict the splitting parameters in the
Practical GADI-HS method.
The inner iteration is terminated if residuals satisfy
\begin{equation*}
\|p^{(k)}\|_{2} \leq 1 \times
10^{-\delta_{H}}\|r^{(k)}\|_{2},~~\|q^{(k)}\|_{2}\leq 1 \times
10^{-\delta_{S}}\|r^{(k+\frac{1}{2})}\|_{2},
\end{equation*}
where $\delta _{H}$ and $\delta _{S}$ are controllable tolerances in the inner iteration
to balance the Hermitian and skew-Hermitian parts in the linear sub-problems.
In these tests, we set $\delta_{H}=\delta_{S}=2$.

For the IHSS method, there has been no theoretical approach to estimate splitting
parameters. Alternatively, the splitting parameters can be obtained by traversing
method or by the GRP method. Section 4.1.3 has demonstrated that the GRP method can
predict accurate splitting parameter of IHSS method as the traversing method does.
Here, we still use the traversing method to obtain the relatively accurate splitting parameter
$\alpha^*$ in traversing interval $(0,3]$ with a step size of $0.01$.
And we use the GPR method to predict the splitting
parameter $\alpha$ for the Practical GADI-HS scheme.
\Cref{tab:PGADIHSsetting2D} gives the training, test, and
retrained data sets.
$\alpha$ in the training data set of the GPR approach is produced by
traversing parameters, but only for small-scale linear
systems, $n$ from $2$ to $100$ with different step size $\Delta n$, as shown in
\Cref{tab:PGADIHSsetting2D}.
\Cref{fig:GRP2D} shows the optimal parameter regression and prediction processes for
the Practical GADI-HS method. From the \Cref{fig:GRP2D}, we can find that
adding retrained data set in training can improve the prediction accuracy and strengthen the
the generalization ability of the regression model.
Meanwhile, we find that the parameter $\omega$ in the Practical GADI-HS
method is insensitive to the scale of the matrix. Therefore, we obtain the optimal
$\omega_p$ by the traversing method for small-scale problems.
\begin{table}[!hbtp]
	\centering
	\footnotesize{
	\caption{
	The training, test and retrained sets of the Practical GADI-HS method in GPR
	algorithm.\label{tab:PGADIHSsetting2D}}
		\begin{tabular}{|c|c|}
			\hline
							 & $n: 6 \sim 10$, $\Delta n=2$\\
			{Training set} & $n: 12 \sim 24$, $\Delta n=4$\\
						   & $n: 32 \sim 72$, $\Delta n=8$\\
						   & $n: 80 \sim 100$, $\Delta n=10$\\
			\hline
			{Test set} & $n: 1\sim 200$, $\Delta n =1$     \\			
			\hline
			{Retrained set}&$n: 110 \sim 200$, $\Delta n=10$ \\
			\hline
		\end{tabular}
	}
\end{table}
\begin{figure}[hbtp]
	\centering
	\includegraphics[width=4.5cm]{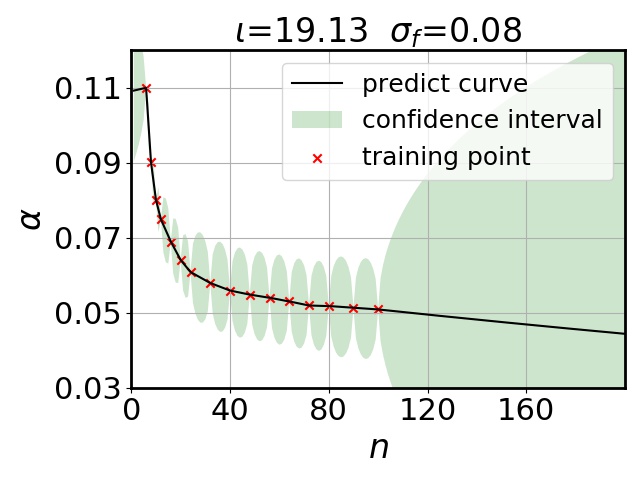}
	\includegraphics[width=4.5cm]{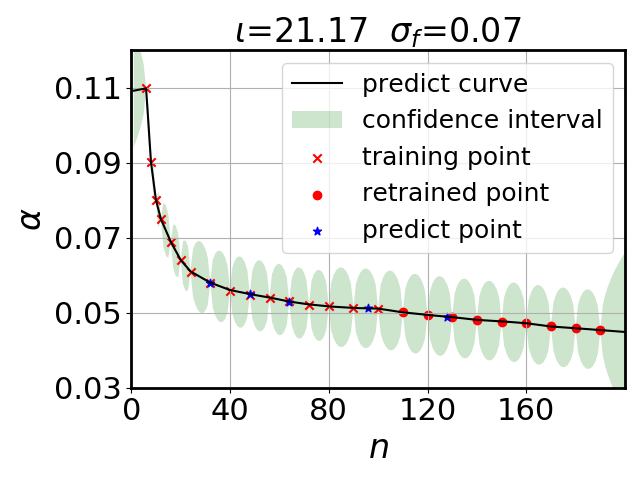}
	\caption{The regression curve of $\alpha$ against $n$ for the Practical GADI-HS method.
	\label{fig:GRP2D}}
\end{figure}

\Cref{tab:e1_e2_result2} shows the numerical results obtained by the IHSS and the
Practical GADI-HS methods for different scale discretization systems.
$IT_{CG}$ and $IT_{CGNE}$ denote the iterations for solving sub-systems in
inexact methods. These results demonstrate that the IHSS method can solve relatively large
linear systems with less CPU times than the HSS algorithm does when the relatively
optimal parameter $\alpha^*$ is used.
However, obtaining $\alpha^*$ in the IHSS costs lots of traversal times, as
shown in the last column in \Cref{tab:e1_e2_result2}.
For example, when $n=64^3$,  the traversal time of the IHSS method is about
$15000$ seconds.
\begin{table}[!hbpt]
	\centering
	\footnotesize{
	\caption{
	Results of the IHSS method ($\alpha^*$ obtained by traversing interval
	$(0,3]$ with a step size of $0.01$) and the Practical GADI-HS method (relatively
	optimal parameters $(\alpha_p,\omega_p)$ obtained by the GPR algorithm) for
	solving 2D parabolic equation.  \label{tab:e1_e2_result2}}
		\begin{tabular}{|c|lccc|c|}
			\hline
			{Method}&{$n^2$}&	{$\alpha^*$}&{IT}&{CPU}&{traversal} \\
			&&&(${IT_{CG}}$, ${IT_{CGNE}}$)&{(s)}&{CPU(s)} \\
			\hline
			&{$32^2(1024)$}& 0.92 & 403 (3.00, 1.00)& 0.08 & 2835.17\\
			{IHSS}&{$48^2(2304)$} & 0.91 & 853 (3.00, 1.00) & 0.36 & 6011.50\\
			&{$64^2(4096)$}& 0.91 & 1460 (3.00, 1.00) & 1.76 & 14881.66\\
			\hline
			&{$n^2$}&({$\alpha_p,\omega_p$})&{IT}&{CPU}&{traversal} \\
			&&&(${IT_{CG}, IT_{CGNE}}$)&{(s)}&{CPU(s)} \\
			\hline
			&{$32^2(1024)$}&(0.0578,1.9)& 34 (20.00, 2.00)& 0.05 &0\\
			&{$48^2(2304)$}&(0.0549,1.9)& 60 (23.00, 1.00)& 0.12 &0\\
			{Practical}&{$64^2(4096)$}&(0.0528,1.9)& 98 (23.00, 1.00)& 0.35 &0\\
			{GADI-HS}&{$96^2(9216)$}&(0.0512,1.9)& 203 (20.00, 1.00)& 1.71 &0\\
			& {$128^2(16384)$} &(0.0492,1.9)&  350 (18.00, 1.00)& 4.02 &0\\
			& {$256^2(65536)$}&(0.0430,1.9)& 1325 (12.00, 1.00)& 70.53 &0\\
			\hline
		\end{tabular}
	}
\end{table}
Compared with the IHSS method, the Practical GADI-HS scheme is much more efficient in
combination with the GPR approach. As an example, \Cref{fig:convergentCurves2D} gives
the convergent curve when $n^2=64^2$.
Therefore in the practical (online) computation,
the Practical GADI-HS scheme can obtain the solution with an efficient one-shot
computation and does not consume traversal CPU time anymore. And with accurately
predicted $\alpha$, the Practical GADI-HS method can efficiently solve large linear
systems.
\begin{figure}[hbtp]
	\centering
	\includegraphics[width=5cm]{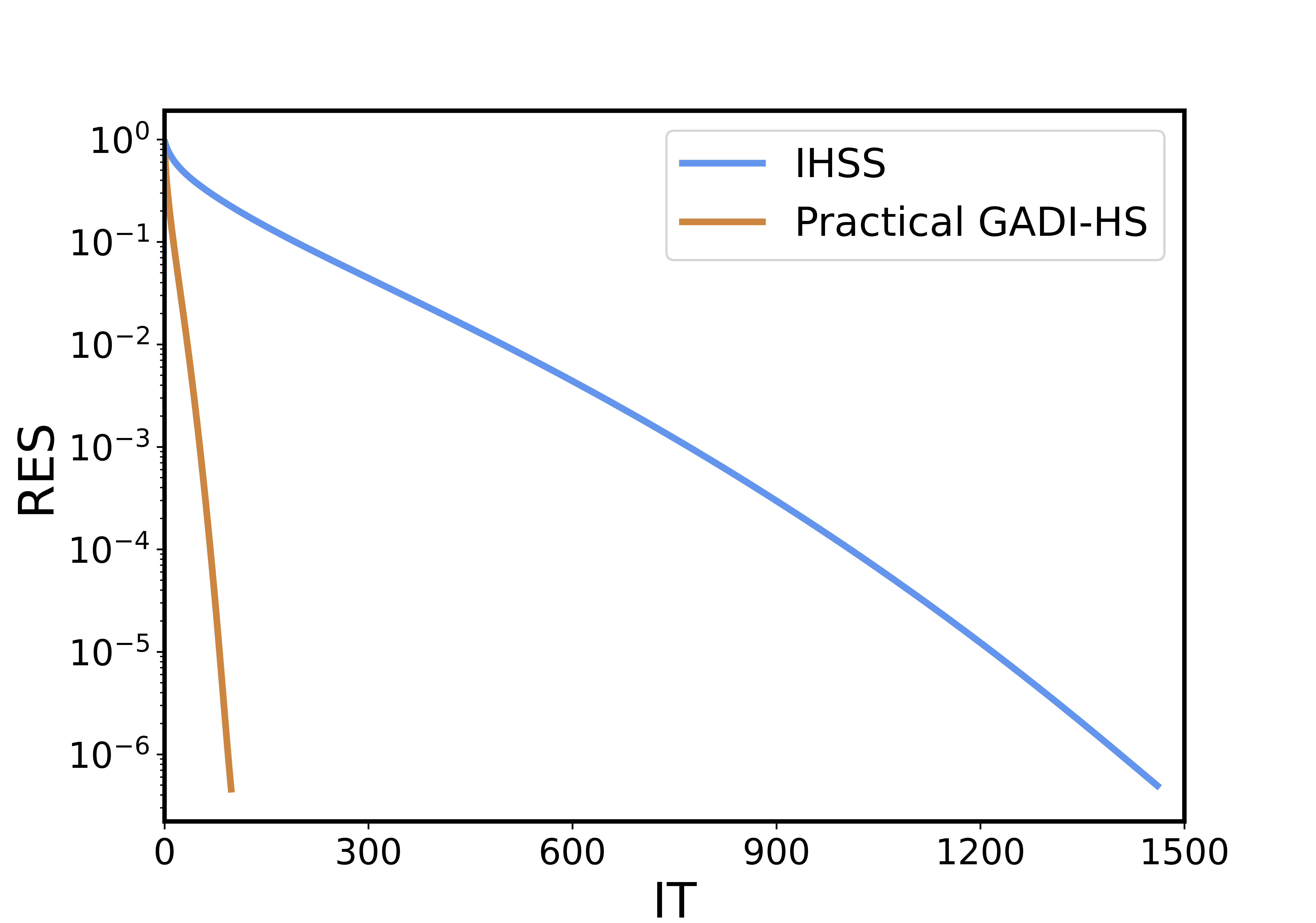}
	\caption{The convergent curves of the IHSS and the Practical GADI-HS in
	solving 2D parabolic equation when $n^2=64^2$.
	\label{fig:convergentCurves2D}}
\end{figure}

\section*{Acknowledgments}
We sincerely thank the editor and anonymous referees for the insightful comments and suggestions. Those comments are all valuable and very helpful for revising and improving our paper. We appreciate Qi Zhou for the kind help in revising our paper.

\end{document}